\newtheorem{THM}{Theorem}
\newtheorem{LMA}[THM]{Lemma}
\newtheorem{CORO}[THM]{Corollary}
\newcommand\ux[3]{{#1}_{#2}^{(#3)}}
\newcommand{\RR}{\mathbb{R}}
\newcommand\ov[1]{\overline{#1}}
\newcommand{\lb}{\lbrack}
\newcommand{\rb}{\rbrack}
\newcommand{\la}{\langle}
\newcommand{\ra}{\rangle}
\newcommand\opn[1]{\operatorname{#1}}
\newcommand\wh[1]{{\widehat{#1}}}
\newcommand\wt[1]{{\widetilde{#1}}}
\newcommand{\Sym}{\Sigma}
\newcommand\half{\frac{1}{2}}
\numberwithin{THM}{section}
\numberwithin{equation}{section}
\DeclarePairedDelimiter{\floor}{\lfloor}{\rfloor}
\newcommand\imgscale{0.84}
\begin{document}
\title{Block BFGS Methods}
\author{Wenbo Gao and Donald Goldfarb$^\dagger$}
\address{Department of Industrial Engineering and Operations Research, Columbia University}
\thanks{$^\dagger$Research of this author was supported in part by NSF Grant CCF-1527809.}
\email{\texttt{wg2279@columbia.edu, goldfarb@columbia.edu}}
\subjclass[2010]{90C53, 90C26} 
\date{November 8, 2017.}
\maketitle
\begin{abstract}
We introduce a quasi-Newton method with block updates called \emph{Block BFGS}. We show that this method, performed with inexact Armijo-Wolfe line searches, converges globally and $q$-stage superlinearly under the same convexity assumptions as BFGS. We also show that Block BFGS is globally convergent to a stationary point when applied to non-convex functions with bounded Hessian, and discuss other modifications for non-convex minimization. Numerical experiments comparing Block BFGS, BFGS and gradient descent are presented.
\end{abstract}

\section{Introduction}
The classical BFGS method is perhaps the best known \emph{quasi-Newton method} for minimizing an unconstrained function $f(x)$. These methods iteratively proceed along search directions $d_k =  -B_k^{-1}\nabla f(x_k)$, where $B_k$ is an approximation to the Hessian $\nabla^2f(x_k)$ at the current iterate $x_k$. Quasi-Newton methods differ primarily in the manner in which they update the approximation $B_k$. The BFGS method constructs an update $B_{k+1}$ that is the nearest matrix to $B_k$ (in a variable metric) satisfying the \emph{secant equation} $B_{k+1}(x_{k+1} - x_k) = \nabla f(x_{k+1}) - \nabla f(x_k)$ \cite{G1970}. This can be interpreted as modifying $B_k$ to act like $\nabla^2 f(x)$ along the step $x_{k+1} - x_k$, so that successive updates induce $B_k$ to resemble $\nabla^2 f(x)$ along the search directions.

A natural extension of the classical BFGS method is to incorporate information about $\nabla^2 f(x)$ along \emph{multiple} directions in each update. This further improves the accuracy of the local Hessian approximation, allowing one to obtain better search directions. Early work in this area includes the development by Schnabel \cite{SCH} of quasi-Newton methods that satisfy multiple (say, $q$) secant equations $B_{k+1}\ux{s}{k}{i} = \nabla f(x_{k+1}) - \nabla f(x_{k+1} - \ux{s}{k}{i})$ for directions $\ux{s}{k}{1},\ldots,\ux{s}{k}{q}$. This approach has the disadvantage that the resulting update is generally not symmetric, and considerable modifications are required to ensure $B_k$ remains positive definite. Consequently, there appears to have been little interest in quasi-Newton methods with block updates in the years following Schnabel's initial report.

More recently, a stochastic quasi-Newton method with block updates was introduced by Gower, Goldfarb, and Richt\'{a}rik \cite{GGR}. Their approach constructs an update which satisfies \emph{sketching equations} of the form
$$B_{k+1}\ux{s}{k}{i} = \nabla^2 f(x_{k+1}) \ux{s}{k}{i}$$
for multiple directions $\ux{s}{k}{i}$. By using sketching equations instead of secant equations, the update is guaranteed to remain symmetric, and in the case where $f(x)$ is convex, positive definite. The sketching equations can be thought of as `tangent' equations that require $B_{k+1}$ to incorporate information about the Hessian $\nabla^2 f(x_{k+1})$ at the most recent point $x_{k+1}$, as opposed to information about the average of $\nabla^2 f(x)$ between two points, i.e, along a secant. Consequently, in terms of the information used, the block updating formula is Newton-like rather than secant-like. A Hessian-vector product $\nabla^2 f(x_{k+1})\ux{s}{k}{i}$ can generally be computed much faster than the full Hessian $\nabla^2 f(x_{k+1})$, and the operation of computing $\nabla^2 f(x_{k+1})\ux{s}{k}{i}$ for multiple directions $\ux{s}{k}{1},\ldots,\ux{s}{k}{q}$ can be done in parallel.

Computing the Hessian-vector products $\nabla^2 f(x_{k+1}) \ux{s}{k}{i}$, referred to as \emph{Hessian actions}, involves additional work beyond that of classical BFGS updates, where the gradients can be reused to compute $\nabla f(x_{k+1}) - \nabla f(x_k)$. However, the increased cost of block updates may be justified in order to obtain better search directions, for the same reason that Newton's method often outperforms gradient descent: the greater cost per iteration is compensated by convergence in fewer iterations, in regions where the curvature can be used effectively. Our numerical experiments in Section 7 explored this trade-off, and we found that using block updates did result in performance gains on many problems. 

Other experiments indicate that quasi-Newton methods using Hessian actions and block updates are promising for empirical risk minimization problems arising from machine learning. Byrd, Hansen, Nocedal, and Yuan \cite{BHNS} proposed a stochastic limited-memory algorithm \emph{Stochastic Quasi-Newton} (SQN), in which the secant equation is replaced by a sub-sampled sketching equation $B_{k+1}s_k = \wh{\nabla}^2 f(x_{k+1}) s_k$ (here $\wh{\nabla}^2f(x)$ denotes a sub-sampled Hessian). The authors \cite{BHNS} remark that using the sub-sampled Hessian action avoids harmful effects from gradient differencing in the stochastic setting. In \cite{GGR}, a stochastic limited-memory method \emph{Stochastic Block L-BFGS}, using block updates, outperformed other state-of-the-art methods when applied to large-scale logistic regression problems.

In this paper, we introduce a deterministic quasi-Newton method \emph{Block BFGS}. The key feature of Block BFGS is the inclusion of information about $\nabla^2 f(x)$ along multiple directions, by enforcing that $B_{k+1}$ satisfies the sketching equations for a subset of previous search directions. We show that this method, performed with inexact Armijo-Wolfe line searches, has the same convergence properties as the classical BFGS method. Namely, if $f$ is twice differentiable, convex, and bounded below, and the gradient of $f$ is Lipschitz continuous, then Block BFGS converges. If, in addition, $f$ is strongly convex and the Hessian of $f$ is Lipschitz continuous, then Block BFGS achieves $Q$-superlinear convergence. Note that we use a slightly modified notion of $Q$-superlinear convergence: we prove that the sequence of quotients $ \|\ux{x}{k}{i+1} - x_\ast\| / \|\ux{x}{k}{i} - x_\ast\|$, with possibly a small number of terms removed, converges to 0. The precise statement of this result is given in \Cref{SUPERLINEAR}. We also note that our convergence results can easily be extended to block versions of the restricted Broyden class of quasi-Newton methods as in \cite{BNY}.

These results fill a gap in the theory of quasi-Newton methods, as updates based on the Hessian action have previously only been used within limited-memory methods, for which the analysis is significantly simpler. Because of its limited-memory nature, the Stochastic Block L-BFGS method in \cite{GGR} is only proved to be $R$-linearly convergent (in expectation, when using a fixed step size). For this method, as is the case for the deterministic L-BFGS method \cite{LBFGS}, the convergence rate that is proved is worse than the rate for gradient descent (GD), even though in practice, L-BFGS almost always converges far more rapidly than GD. We believe that our proof of the $Q$-superlinear convergence of Block BFGS in this paper provides a rationale for the superior performance of the Stochastic Block L-BFGS method, and behavior of deterministic limited-memory Block BFGS methods as well.

Block BFGS can also be applied to non-convex functions. We show that if $f$ has bounded Hessian, then Block BFGS converges to a stationary point of $f$. Modified forms of the classical BFGS method also have natural extensions to block updates, so modified block quasi-Newton methods are applicable in the non-convex setting.

The paper is organized as follows. \Cref{sec:PRELIM} contains preliminaries and describes Armijo-Wolfe inexact line searches. In \Cref{sec:ALGOS}, we formally define the Block BFGS method and several variants. In \Cref{sec:CONVERGENT} and \Cref{sec:SUPERLINEAR} respectively, we show that Block BFGS converges, and converges superlinearly, for $f$ satisfying appropriate conditions. In \Cref{sec:NCONV}, we show that Block BFGS converges for suitable non-convex functions, and describe several other modifications to adapt Block BFGS for non-convex optimization. In \Cref{sec:NUMER}, we present the results of numerical experiments for several classes of convex and non-convex problems.

\section{Preliminaries}\label{sec:PRELIM}
The following notation will be used. The objective function of $n$ variables is denoted by $f: \RR^n \rightarrow \RR$. We write $g(x)$ for the gradient $\nabla f(x)$ and $G(x)$ for the Hessian $\nabla^2 f(x)$. For a sequence $\{x_k\}$, $f_k = f(x_k)$ and $g_k = g(x_k)$. However, we deliberately use $G_k = G(x_{k+1})$ to simplify the update formula.

The norm $\|\cdot\|$ denotes the $L_2$ norm, or for matrices, the $L_2$ operator norm. The Frobenius norm will be explicitly indicated as $\|\cdot\|_F$. Angle brackets $\la \cdot, \cdot \ra$ denote the standard inner product $\la x,y \ra = y^Tx$ and the trace inner product $\la X,Y \ra = \opn{Tr}(Y^TX)$. We use either notation $\la x,y \ra$ or $y^Tx$ as is convenient. The symbol $\Sym^n$ denotes the space of $n \times n$ symmetric matrices, and $\preceq$ denotes the L\"{o}wner partial order; hence $A \succ 0$ means $A$ is positive definite.

An $L\Sigma L^T$ decomposition is a factorization of a positive definite matrix into a product $L \Sigma L^T$, where $L$ is lower triangular with ones on the diagonal, and $\Sigma = \opn{Diag}(\sigma_1^2,\ldots,\sigma_n^2)$. This is commonly called an $LDL^T$ decomposition in the literature, but we write $\Sigma$ in place of $D$ as we use $D$ to denote a matrix whose columns are previous search directions.

In the pseudocode for our algorithm, \texttt{size}$(A,1)$ and \texttt{size}$(A,2)$ denote the number of rows and columns of a matrix $A$ respectively. The $ij$-entry of a matrix $A$ will be denoted by $A_{ij}$. We use $\opn{Col}(A)$ to denote the linear space spanned by the columns of $A$. By convention, a sum over an empty index set is equal to 0.

Our inexact line search selects step sizes $\lambda_k$ satisfying the \emph{Armijo-Wolfe} conditions: for parameters $\alpha, \beta$ with $0 < \alpha < \half$ and $\alpha < \beta < 1$, the step satisfies
\begin{align}
	\label{armijo} f(x_k + \lambda_k d_k) &\leq f(x_k) + \alpha \lambda_k \la g_k, d_k\ra \\
	\label{wolfe} \la g(x_k+ \lambda_kd_k), d_k \ra &\geq \beta \la g_k, d_k\ra.
\end{align}
Furthermore, our line search always selects $\lambda_k = 1$ whenever this step size is admissible. This is important in the analysis of superlinear convergence in \Cref{sec:SUPERLINEAR}.

\section{Block quasi-Newton Methods}\label{sec:ALGOS}

In this section, we introduce \emph{Block BFGS}, a quasi-Newton method with block updates, and several variants. 

\subsection{Block BFGS}

\begin{algorithm}
	\caption{Block BFGS}
	\label{alg:BLOCK}
	\begin{algorithmic}[1]
		\Statex \textbf{input}: $\ux{x}{1}{1}, B_1, q$
		\For{$k = 1,2,3\ldots$}
		\For{$i = 1,\ldots,q$}
		\State $\ux{d}{k}{i} \leftarrow -B_k^{-1}\ux{g}{k}{i}$
		\State $\ux{\lambda}{k}{i} \leftarrow \textsc{linesearch}(\ux{x}{k}{i}, \ux{d}{k}{i})$
		\State $\ux{s}{k}{i} \leftarrow \ux{\lambda}{k}{i}\ux{d}{k}{i}$
		\State $\ux{x}{k}{i+1} \leftarrow \ux{x}{k}{i} + \ux{s}{k}{i}$
		\EndFor
		\State $G_k \leftarrow G(\ux{x}{k}{q+1})$
		\State $S_k \leftarrow \lb \ux{s}{k}{1} \ldots \ux{s}{k}{q}\rb$
		\State $D_k \leftarrow \textsc{filtersteps}(S_k, G_k)$
		\If{$D_k$ is not empty}
		\State $B_{k+1} \leftarrow B_k - B_kD_k(D_k^TB_kD_k)^{-1}D_k^TB_k + G_kD_k(D_k^TG_kD_k)^{-1}D_k^TG_k$\;
		\Else 
		\State $B_{k+1} \leftarrow B_k$
		\EndIf
		\State $\ux{x}{k+1}{1} \leftarrow \ux{x}{k}{q+1}$\;
		\EndFor
	\end{algorithmic}
\end{algorithm}

\begin{algorithm}
	\caption{\textsc{filtersteps}}
	\label{alg:FILTER}
	\begin{algorithmic}[1]
		\Statex \textbf{input}: $S_k, G_k$ \hspace{2em} \textbf{output}: $D_k$ \hspace{2em} \textbf{parameters}: threshold $\tau > 0$
		\State Initialize $D_k \leftarrow S_k, i \leftarrow 1$
		\While{$i \leq \texttt{size}(D_k,2)$}
		\State $\sigma_i^2 \leftarrow \lb D_k^TG_kD_k \rb_{ii} - \sum_{j=1}^{i-1} L_{ij}^2\Sigma_{jj}$
		\State $s_i \leftarrow $ column $i$ of $D_k$
		\If{ $\sigma_i^2 \geq \tau \|s_i\|^2$}
		\State $\Sigma_{ii} \leftarrow \sigma_i^2$
		\State $L_{ii} \leftarrow 1$
		\For{$j = i+1,\ldots, \texttt{size}(D_k,2)$}
		\State $L_{ji} \leftarrow\ \frac{1}{\Sigma_{ii}}( \lb D_k^TG_kD_k \rb_{ji} - \sum_{k=1}^{i-1} L_{ik}L_{jk}\Sigma_{kk})$
		\EndFor
		\State $i \leftarrow i+1$
		\Else
		\State Delete column $i$ from $D_k$ and row $i$ from $L$\;
		\EndIf
		\EndWhile
	\end{algorithmic}
\end{algorithm}

Block BFGS (Algorithm~\ref{alg:BLOCK}) takes $q$ steps in each block, using a fixed Hessian approximation $B_k$. We may also take a varying number of steps, bounded above by $q$, but we assume every block contains $q$ steps to simplify the presentation. We use a subscript $k$ for the block index, and superscripts $(i)$ for the steps within each block. The $k$-th block contains the iterates $\ux{x}{k}{1}, \ldots, \ux{x}{k}{q+1}$, and $\ux{x}{k+1}{1} = \ux{x}{k}{q+1}$. At each point $\ux{x}{k}{i}$, the step direction is $\ux{d}{k}{i} = -B_k^{-1}\ux{g}{k}{i}$, and line search is performed to obtain a step size $\ux{\lambda}{k}{i}$. We take a step $\ux{s}{k}{i} = \ux{\lambda}{k}{i}\ux{d}{k}{i}$. The angle between $\ux{s}{k}{i}$ and $-\ux{g}{k}{i}$ is denoted $\ux{\theta}{k}{i}$. As $B_k$ is positive definite, $\ux{\theta}{k}{i} \in \lb 0, \frac{\pi}{2})$.

After taking $q$ steps, the matrix $B_k$ is updated. Let $G_k = G(\ux{x}{k}{q+1})$ denote the Hessian at the final iterate, and form the matrix $S_k = \lb \ux{s}{k}{1} \ldots \ux{s}{k}{q} \rb$. We apply the \textsc{filtersteps} procedure (Algorithm~\ref{alg:FILTER}) to $S_k$, which returns a subset $D_k$ of the columns of $S_k$ satisfying $\sigma_i^2 \geq \tau \|s_i\|^2$, where $s_i$ is the $i$-th column of $D_k$ and $\sigma_i^2$ is the $i$-th diagonal entry of the $L\Sigma L^T$ decomposition of $D_k^TG_kD_k$. $\tau > 0$ is a parameter which controls the strictness of the filtering; a small value of $\tau$ permits $D_k$ to contain steps that are closer to being linearly dependent, as well as steps with smaller curvature. In essence, \textsc{filtersteps} iteratively computes the $L \Sigma L^T$ decomposition of $S_k^TG_kS_k$ and discards columns of $S_k$ corresponding to small diagonal entries, with the remaining columns forming $D_k$.

Define $q_k$ to be the number of columns of $D_k$. If $D_k$ is the empty matrix (all columns were removed), then no update is performed and $B_{k+1} = B_k$. If $D_k$ is not empty, the matrix $B_k$ is updated to have the same action as the Hessian $G_k$ on the column space of $D_k$, or equivalently,
\begin{equation}\label{eq:SKETCH}
	B_{k+1}D_k = G_kD_k.
\end{equation}
Let $D = D_k, G = G_k$. The formula for the update is given by
\begin{equation}\label{eq:BFGS_UPD}
	B_{k+1} = B_k - B_kD(D^TB_kD)^{-1}D^TB_k + G D(D^TGD)^{-1}D^TG.
\end{equation}
This formula is invariant under a change of basis of $\opn{Col}(D_k)$, so we can choose $D_k$ to be any matrix with the same column space. To see this, observe that a change of basis is given by $D_k P$ for an invertible $q \times q$ matrix $P$. The update (\ref{eq:BFGS_UPD}) for the matrix $D_kP$ is given by
\begin{align*}
	B_{k+1} &= B_k - B_kDP(P^TD^TB_kDP)^{-1}P^TD^TB_k + G DP(P^TD^TGDP)^{-1}P^TD^TG \\
	&= B_k - B_kD(D^TB_kD)^{-1}D^TB_k + G D(D^TGD)^{-1}D^TG.
\end{align*}

On the other hand, the matrix $D_k$ obtained from filtering $S_k$ is \emph{not} invariant under a change of basis of $S_k$, and it is possible to control the number of columns removed by selecting an appropriate basis for $S_k$. We chose to take $S_k = \lb \ux{s}{k}{1} \ldots \ux{s}{k}{q}\rb$ in order to retain control over the ratio $\opn{det}(D_k^TG_kD_k)/\opn{det}(D_k^TB_kD_k)$, which is crucial for our theoretical analysis. We also note that in \cite{GGR}, two other choices for the columns of $D_k$ were studied for use in the Stochastic Block L-BFGS method, and the results reported there showed that the choice $D_k = \lb \ux{s}{k}{1} \ldots \ux{s}{k}{q} \rb$ worked best.

As is the case for standard quasi-Newton updates, there are many possible updates that satisfy equation (\ref{eq:SKETCH}). The specific Block BFGS update (\ref{eq:BFGS_UPD}) is derived as follows. Let $H_k = B_k^{-1}$ be the approximation of the inverse Hessian. In contrast with the classical BFGS update, the update (\ref{eq:BFGS_UPD}) is chosen so that $H_{k+1}$ is the nearest matrix to $H_k$ in a weighted norm, satisfying the system of sketching equations $H_{k+1}G_kD_k = D_k$ rather than a set of secant equations. That is, $H_{k+1}$ is the solution to the minimization problem
\begin{equation}\label{eq:VM}
	\begin{array}{rl} \min\limits_{\wt{H} \in \RR^{n \times n}} &\|\wt{H} - H_k\|_{G_k} \\  \text{s.t } &\wt{H} = \wt{H}^T, \wt{H}G_kD_k = D_k\end{array}
\end{equation}
where $\|\cdot\|_{G_k}$ is the norm $\|X\|_{G_k} = \opn{Tr}(XG_kX^TG_k)$, in analogy with the classical BFGS update. This norm is induced by an inner product, so $H_{k+1}$ is an orthogonal projection onto the subspace $\{\wt{H} \in \Sym^n: \wt{H}G_kD_k = D_k\}$. In Appendix~\ref{appdx:UPD_FORM}, it is shown that $H_{k+1}$ has the explicit formula
\begin{equation}\label{eq:BFGS_INV_UPD}
	H_{k+1} = D(D^TGD)^{-1}D^T + (I - D(D^TGD)^{-1}D^TG)H_k(I - GD(D^TGD)^{-1}D^T).
\end{equation}
Taking the inverse yields formula (\ref{eq:BFGS_UPD}). Moreover, as shown in \cite{SCH}, we have
\begin{LMA}\label{BPosDef}
	If $B_k$ ($H_k$) and $D_k^TG_kD_k$ are positive definite, then the Block BFGS update (\ref{eq:BFGS_UPD}) for $B_{k+1}$ ((\ref{eq:BFGS_INV_UPD}) for $H_{k+1}$) is positive definite.
\end{LMA}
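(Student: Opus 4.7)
The plan is to work directly with the explicit formula (\ref{eq:BFGS_INV_UPD}) for $H_{k+1}$ and prove $H_{k+1} \succ 0$; the corresponding statement for $B_{k+1} = H_{k+1}^{-1}$ then follows because the inverse of a positive definite matrix is positive definite. Writing $D = D_k$, $G = G_k$ for brevity, the key observation is that the matrix $P := GD(D^TGD)^{-1}D^T$ satisfies $P^2 = P$, so $P$ is an oblique projector with $\opn{Col}(P) = \opn{Col}(GD)$. The formula (\ref{eq:BFGS_INV_UPD}) then rewrites as
\begin{equation*}
	H_{k+1} = D(D^TGD)^{-1}D^T + (I-P)^T H_k (I-P),
\end{equation*}
exhibiting $H_{k+1}$ as a sum of two manifestly symmetric matrices.

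First I would check that each summand is positive semidefinite. For any $x \in \RR^n$, setting $y = D^T x$ gives $x^T D(D^TGD)^{-1}D^T x = y^T (D^TGD)^{-1} y \geq 0$, since $D^TGD \succ 0$ by hypothesis, with equality exactly when $D^T x = 0$. Setting $v = (I-P)x$, the second quadratic form equals $v^T H_k v \geq 0$ since $H_k \succ 0$, with equality exactly when $x \in \opn{ker}(I-P) = \opn{Col}(P) = \opn{Col}(GD)$. Hence $H_{k+1} \succeq 0$, and positive definiteness reduces to ruling out a nonzero $x$ for which both quadratic forms vanish simultaneously.

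For the decisive step, suppose $D^T x = 0$ and $x = GDw$ for some $w \in \RR^{q_k}$. Substituting yields $0 = D^T x = (D^TGD) w$, and since $D^TGD \succ 0$ this forces $w = 0$, hence $x = 0$. Therefore $H_{k+1} \succ 0$, and $B_{k+1} = H_{k+1}^{-1} \succ 0$. The only substantive obstacle is recognizing the oblique-projector splitting $(I-P)^T H_k (I-P)$ concealed in (\ref{eq:BFGS_INV_UPD}); once this decomposition is in hand, the argument reduces to a short rank/range manipulation and requires no delicate estimates. An essentially identical splitting (with the roles of $B_k$ and $G$ interchanged, using a Cholesky factor of $B_k$) gives a direct verification for (\ref{eq:BFGS_UPD}) without passing through the inverse, should one prefer to avoid invoking invertibility.
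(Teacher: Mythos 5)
Your proof is correct and is essentially the same as the paper's: the paper also evaluates $z^TH_{k+1}z$ via formula (\ref{eq:BFGS_INV_UPD}) as the sum $w^T(D_k^TG_kD_k)^{-1}w + v^TH_kv$ with $w = D_k^Tz$ and $v = (I - G_kD_k(D_k^TG_kD_k)^{-1}D_k^T)z$, which is exactly your oblique-projector splitting. The only cosmetic difference is in dispatching the simultaneous-vanishing case (the paper notes $w=0$ forces $v=z$, while you pass through $\opn{Col}(G_kD_k)$), and the paper likewise obtains $B_{k+1}\succ 0$ by inverting $H_{k+1}$.
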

\begin{proof}
	Our proof is adapted from Theorem 3.1 of \cite{SCH}. Let $z \in \RR^n$, and define $w = D_k^Tz, v = z - G_kD_k(D_k^TG_kD_k)^{-1}w$. Using formula (\ref{eq:BFGS_INV_UPD}), we find that
	$$z^TH_{k+1}z = w^T(D_k^TG_kD_k)^{-1}w + v^T H_k v$$
	so $z^TH_{k+1}z \geq 0$. Furthermore, $z^TH_{k+1}z = 0$ only if both $w = 0$ and $v = 0$, in which case $z = 0$. Hence $H_{k+1}$ is positive definite.
\end{proof}

In Section~\ref{sec:CONVERGENT}, we show that Block BFGS converges even if $B_k = B_{k+1} = \ldots$ is stationary. In Section~\ref{sec:SUPERLINEAR}, we show that when $f$ is strongly convex, the parameter $\tau$ can be naturally chosen so an update is always performed, and the convergence is superlinear.

In theory, \textsc{filtersteps} is required to ensure that the update (\ref{eq:BFGS_UPD}) exists. However, in practice, one is unlikely to encounter linearly dependent directions, or directions lying exactly in the null space of $G_k$. Thus, one may omit \textsc{filtersteps} unless there is reason to believe that $G_k$ is singular and problems will arise. However, filtering may improve numerical stability, by removing nearly linearly dependent steps from $D_k$.
\subsection{Rolling Block BFGS}\label{sub:RBBFGS}

Block BFGS uses the same matrix $B_k$ throughout each block of $q$ steps. We could also add information from these steps immediately, at the cost of doing far more updates. This variant, \emph{Rolling Block BFGS}, performs a block update after every step, using a subset $D_k$ of the previous $q$ steps. $D_k$ is formed by adding $s_k$ as the first column of $D_{k-1}$, removing $s_{k-q}$ if present, and filtering.

In general, one might consider schemes for interleaving standard BFGS updates with periodic block updates, to capture additional second-order information.

\section{Convergence of Block BFGS}\label{sec:CONVERGENT}
In this section we prove that Block BFGS with inexact Armijo-Wolfe line searches converges under the same conditions as does the classical BFGS method. These conditions are given in Assumption 1.
\paragraph{\textbf{Assumption 1}}
\begin{enumerate}
	\item $f$ is convex, twice differentiable, and bounded below.
	\item For all $x$ in the level set $\Omega = \{x \in \RR^n: f(x) \leq f(x_1)\}$, the Hessian satisfies $G(x) \preceq MI$, or equivalently, $g(x)$ is Lipschitz continuous with Lipschitz constant $M$.
\end{enumerate}

The main goal of this section is to prove the following theorem. The concept of our proof is similar to the analysis given by Powell \cite{POW} for the classical BFGS method.
\begin{THM}\label{CONVERGENT}
	Let $f$ be a function satisfying Assumption 1, and let $\{x_k\}_{k=1}^\infty$ denote the sequence of all iterates produced by Block BFGS. Then $\liminf_k \|g_k\| = 0$.
\end{THM}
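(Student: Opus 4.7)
The plan is to adapt Powell's convergence argument for classical BFGS, accommodating the block update \eqref{eq:BFGS_UPD} and the filtering procedure. First I would establish a Zoutendijk-type inequality, then carry out trace and determinant bookkeeping of $B_k$, and finally combine these via a potential function contradiction.

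For the Zoutendijk step, the Wolfe condition \eqref{wolfe} combined with the Lipschitz continuity of $g$ yields a lower bound on the step size of the form $\ux{\lambda}{k}{i} \geq C \cos(\ux{\theta}{k}{i})\|\ux{g}{k}{i}\|/\|\ux{d}{k}{i}\|$. Plugging this into the Armijo condition \eqref{armijo} gives a per-step descent of at least $C'\cos^2(\ux{\theta}{k}{i})\|\ux{g}{k}{i}\|^2$, and summing across all iterates, using that $f$ is bounded below, yields $\sum_{k,i}\cos^2(\ux{\theta}{k}{i})\|\ux{g}{k}{i}\|^2 < \infty$. Assume for contradiction that $\liminf_k \|g_k\| > 0$; then $\|\ux{g}{k}{i}\|$ is bounded below by some $\epsilon > 0$ for all large $k$, forcing $\cos(\ux{\theta}{k}{i}) \to 0$.

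Next I would track trace and determinant of $B_k$ under each block update. Taking traces of \eqref{eq:BFGS_UPD} and using $G_k^2 \preceq M G_k$ on the level set $\Omega$ yields $\opn{Tr}(B_{k+1}) - \opn{Tr}(B_k) \leq M q_k \leq M q$. A block matrix determinant lemma computation applied to the rank-$q_k$ symmetric perturbation in \eqref{eq:BFGS_UPD} gives $\det(B_{k+1})/\det(B_k) = \det(D_k^T G_k D_k)/\det(D_k^T B_k D_k)$. The \textsc{filtersteps} threshold $\sigma_i^2 \geq \tau \|s_i\|^2$ provides $\det(D_k^T G_k D_k) \geq \tau^{q_k}\prod_i \|s_{k,i}\|^2$, while Hadamard's inequality applied to the Gram matrix of $B_k^{1/2} D_k$ gives $\det(D_k^T B_k D_k) \leq \prod_i s_{k,i}^T B_k s_{k,i}$.

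Following Powell's potential $\psi(B) = \opn{Tr}(B) - \ln\det(B) \geq n$, telescoping across blocks yields an inequality relating $\psi(B_{K+1})$ to a partial sum of $\ln\cos(\ux{\theta}{k}{i})$ over retained directions plus a term that grows only linearly in $K$. Since $\psi(B_k) \geq n$, this partial sum cannot tend to $-\infty$ arbitrarily fast, so $\cos(\ux{\theta}{k}{i})$ must stay bounded below along a subsequence, contradicting $\cos\theta \to 0$. The main technical obstacle is the interaction between the filter and the potential bookkeeping: the determinant identity only involves the retained subset $D_k \subseteq S_k$, so only those directions' angles appear in the telescoping. I would either need to argue that discarded steps (those with $\sigma_i^2 < \tau\|s_i\|^2$) occur rarely enough that they cannot disrupt the contradiction, or handle them separately via the Zoutendijk sum. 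A further subtlety is that $B_k$ is held fixed across the $q$ steps in block $k$, so a single potential change per block must be balanced against the $q$ angles $\ux{\theta}{k}{1}, \ldots, \ux{\theta}{k}{q}$, requiring the bookkeeping to transfer the angle bound from the representative directions in $D_k$ to all $q$ steps via the common matrix $B_k$.
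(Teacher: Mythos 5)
Your outline is essentially the paper's own argument: the paper also runs Powell-style trace/determinant bookkeeping, with \Cref{SUMMABLE} and \Cref{LIPSCH_KEY} playing the role of your Zoutendijk step, \Cref{TRACE} your trace bound, \Cref{DETUPDATE} your determinant identity, and \Cref{DET} your filter-plus-Hadamard lower bound on the determinant ratio. The only cosmetic difference is that the paper works multiplicatively, squeezing the product $\prod_{j,i}\|\ux{g}{j}{i}\|^2/\la -\ux{g}{j}{i},\ux{s}{j}{i}\ra$ between a determinant ratio and $c_4^k$ via AM--GM (\Cref{TRPROD}, \Cref{DETBD}, \Cref{LASTBOUND}), rather than additively through $\opn{Tr}(B)-\ln\det(B)$; taking logarithms converts one into the other.

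The two obstacles you flag at the end are real, but the resolutions you propose point in the wrong direction. First, you do not need discarded steps to be rare, nor to transfer any bound from the retained directions to all $q$ steps: the contradiction lives entirely on the retained steps. Under the contradiction hypothesis, $\|\ux{g}{k}{i}\|\geq\epsilon$ holds for every step and $\la -\ux{g}{k}{i},\ux{s}{k}{i}\ra\to 0$ (equivalently $\cos\ux{\theta}{k}{i}\to 0$) holds along \emph{all} steps, hence in particular along the retained ones; since every block with nonempty $D_k$ contributes at least one retained step, the product (or your sum of $-\ln\cos^2\ux{\theta}{k}{i}$) over retained steps alone already outgrows the geometric (resp.\ linear-in-$k$) upper bound. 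The only case needing separate treatment is when $D_k$ is eventually always empty, where the method reduces to a scaled gradient method with fixed $B_{k_0}$ and $\cos\theta_k$ is bounded below by the reciprocal condition number of $B_{k_0}$. Second, the mismatch between one trace decrement per block and $q_k$ retained angles is resolved by \Cref{STEPTRACE}: choosing a $B_k$-conjugate basis of $\opn{Col}(D_k)$ whose first vector is any prescribed retained step shows that \emph{each} quantity $\ux{\lambda}{k}{i}\|\ux{g}{k}{i}\|^2/\la -\ux{g}{k}{i},\ux{s}{k}{i}\ra$ is individually bounded by the full trace $\opn{Tr}(D_k^TB_k^2D_k(D_k^TB_kD_k)^{-1})$, so using this bound $q_k$ times costs only a factor $q$ absorbed into the constants (the $(qc_3)^{qk}$ of \Cref{TRPROD}; in your potential language, $-t/q+\ln t\leq\ln q-1$). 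Finally, make sure the step sizes $\ux{\lambda}{k}{i}$ appearing in the Hadamard bound cancel against those in the trace product, which is exactly the cancellation performed in \Cref{LASTBOUND}. With these points repaired, your plan goes through.
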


We begin by proving several lemmas. The first two are well known; see \cite{BNY, POW}.

\begin{LMA}\label{SUMMABLE}
	$\sum_{k=1}^\infty \la -g_k, s_k \ra < \infty$, and therefore $\la -g_k, s_k \ra \rightarrow 0$.
\end{LMA}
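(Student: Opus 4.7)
The plan is to derive the summability directly from the Armijo (sufficient decrease) condition together with the assumption that $f$ is bounded below. I do not need to use the Wolfe condition, positive definiteness of $B_k$, or the block structure for this lemma — every iterate is generated by a descent-direction line search satisfying \eqref{armijo}, regardless of which block it belongs to.

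First I would relabel: whatever the global indexing of the iterates $\{x_k\}$, each step has the form $x_{k+1} = x_k + s_k$ with $s_k = \lambda_k d_k$, so $\la g_k, s_k\ra = \lambda_k \la g_k, d_k\ra$. Plugging this into \eqref{armijo} gives
\[
	f(x_{k+1}) \leq f(x_k) + \alpha\, \la g_k, s_k\ra = f(x_k) - \alpha\, \la -g_k, s_k\ra.
\]
Because $B_k \succ 0$ and $d_k = -B_k^{-1} g_k$ (whenever $g_k \neq 0$), the search direction is a descent direction, hence $\la -g_k, s_k\ra = \lambda_k\, g_k^T B_k^{-1} g_k \geq 0$. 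Rearranging yields the telescoping bound
\[
	\alpha\, \la -g_k, s_k\ra \leq f(x_k) - f(x_{k+1}).
\]

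Next I would sum from $k=1$ to $N$: the right side telescopes to $f(x_1) - f(x_{N+1})$, and by Assumption~1(1), $f$ is bounded below, so $f(x_1) - f(x_{N+1}) \leq f(x_1) - \inf f < \infty$ uniformly in $N$. Letting $N \to \infty$ gives
\[
	\sum_{k=1}^\infty \la -g_k, s_k\ra \leq \frac{1}{\alpha}\bigl(f(x_1) - \inf f\bigr) < \infty.
\]
Since the series has nonnegative terms and converges, its general term tends to $0$, establishing both conclusions.

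There is no real obstacle here — the argument is standard and is the usual starting point for global convergence proofs of line-search quasi-Newton methods (cf.\ Powell~\cite{POW}). The only thing worth pausing on is to confirm the convention used by the paper for the global iterate indexing $\{x_k\}$ in the theorem statement: the lemma is written for this flattened sequence, so $g_k$ and $s_k$ range over all $\ux{g}{\cdot}{\cdot}$ and $\ux{s}{\cdot}{\cdot}$ across every block, and the bound above applies to each consecutive pair identically.
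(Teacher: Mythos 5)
Your argument is correct and is essentially identical to the paper's proof: both derive $\alpha\,\la -g_k, s_k\ra \leq f_k - f_{k+1}$ from the Armijo condition, telescope the sum, and invoke boundedness of $f$ from below. Your explicit remark that $\la -g_k, s_k\ra \geq 0$ (from $B_k \succ 0$) is a small but welcome addition that the paper leaves implicit.
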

\begin{proof}
	From the Armijo condition (\ref{armijo}), $\la-g_k, s_k\ra = \lambda_k\la -g_k, d_k\ra \leq (1/\alpha) (f_k - f_{k+1})$. As $f$ is bounded below,
	$$\sum_{k=1}^\infty \la -g_k, s_k\ra \leq (1/\alpha) \sum_{k=1}^\infty (f_k - f_{k+1}) \leq (1/\alpha)(f_1 - \lim_{k \rightarrow \infty} f_k) < \infty.$$
\end{proof}

\begin{LMA}\label{LIPSCH_KEY}
	If the gradient $g(x)$ is Lipschitz continuous with constant $M$, then for $c_1 = \frac{1 - \beta}{M}$, we have $\|s_k\| \geq c_1 \|g_k\| \cos \theta_k$.
\end{LMA}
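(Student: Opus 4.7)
The plan is to combine the Wolfe curvature condition with the Lipschitz bound on $g$ to extract a lower bound on $\|s_k\|$ in terms of $\|g_k\|\cos\theta_k$.

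First, I would rewrite the Wolfe condition (\ref{wolfe}) in the form
\[
\la g(x_k + \lambda_k d_k) - g_k, d_k \ra \geq (\beta - 1)\la g_k, d_k\ra = (1-\beta)\la -g_k, d_k\ra.
\]
Since $d_k$ is a descent direction, the right-hand side is nonnegative, and using the definition of $\theta_k$, $\la -g_k, d_k \ra = \|g_k\|\|d_k\|\cos\theta_k$.

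Next I would bound the left-hand side from above using the Lipschitz continuity of $g$. Since $x_k + \lambda_k d_k - x_k = s_k$, Lipschitz continuity with constant $M$ gives $\|g(x_k + \lambda_k d_k) - g_k\| \leq M\|s_k\|$, and hence by Cauchy--Schwarz
\[
\la g(x_k + \lambda_k d_k) - g_k, d_k \ra \leq M\|s_k\|\|d_k\|.
\]
Combining the two inequalities yields $(1-\beta)\|g_k\|\|d_k\|\cos\theta_k \leq M\|s_k\|\|d_k\|$, and dividing through by $M\|d_k\|$ (which is positive since $d_k \neq 0$ whenever $g_k \neq 0$; if $g_k = 0$ the statement is trivial) gives exactly $\|s_k\| \geq \frac{1-\beta}{M}\|g_k\|\cos\theta_k = c_1\|g_k\|\cos\theta_k$.

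There is no real obstacle here; the proof is essentially two inequalities glued together. The only small subtlety is ensuring the iterates lie in $\Omega$ so that the Lipschitz bound applies along the segment from $x_k$ to $x_k + \lambda_k d_k$, but this follows from the Armijo condition (\ref{armijo}), which guarantees $f(x_k + \lambda_k d_k) \leq f(x_k) \leq f(x_1)$, so $x_k + \lambda_k d_k \in \Omega$, and by convexity of sublevel sets the whole segment stays in $\Omega$ as well.
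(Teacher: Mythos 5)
Your proposal is correct and follows essentially the same argument as the paper: both combine the Wolfe condition with the Lipschitz bound on $g$ and Cauchy--Schwarz; the paper simply phrases everything in terms of $s_k = \lambda_k d_k$ rather than $d_k$, which changes nothing since the positive factor $\lambda_k$ cancels. Your added remark about the segment remaining in $\Omega$ is a reasonable extra precaution but not needed for the comparison.
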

\begin{proof}
	Let $y_k = g_{k+1} - g_k$. From the Wolfe condition (\ref{wolfe}),
	$$\la y_k, s_k \ra = \la g_{k+1},s_k \ra - \la g_k, s_k \ra \geq (1-\beta)\la -g_k, s_k\ra.$$
	By the Lipschitz continuity of the gradient, $\|y_k\| \leq M\|s_k\|$. Therefore
	$$(1-\beta) \|g_k\|\|s_k\| \cos \theta_k = (1-\beta)\la -g_k, s_k \ra \leq \la y_k, s_k \ra \leq M\|s_k\|^2$$
	yielding $\|s_k\| \geq c_1 \|g_k\| \cos \theta_k$.
\end{proof}

It is possible that $D_k$ is empty for all $k \geq k_0$, and no further updates are made to $B_{k_0}$. This may occur, for example, if $G(x)$ has arbitrarily small eigenvalues and $\tau$ is chosen to be large. In this case, Block BFGS is equivalent to a scaled gradient method $x_{k+1} = x_k - \lambda_k B_{k_0}^{-1}g_k$ with $B_{k_0}$ a constant positive-definite matrix, for all $k \geq k_0$, which is well-known to converge to a stationary point.

For the remainder of this section, we assume that there is an infinite sequence of updates. In fact, we may further assume that an update is made for every $k$, as one can verify that the propositions of this section continue to hold when we restrict our arguments to the subsequence of $\{B_k\}$ for which updates are made. This simplifies the notation. Note, however, that the same cannot simply be assumed in Section~\ref{sec:SUPERLINEAR}. The results in that section do \emph{not} hold if updates are skipped. However, in Section~\ref{sec:SUPERLINEAR} we are able to choose $\tau$ so as to guarantee that an update is made for every $k$.

\begin{LMA}\label{TRACE}
	Let $c_3 = \opn{Tr}(B_1) + qM$. Then for all $k$,
	$$\opn{Tr}(B_k) \leq c_3k \hspace{1em} \text{and}\hspace{1em} \sum_{j=1}^{k} \opn{Tr}(D_j^TB_j^2D_j(D_j^TB_jD_j)^{-1}) \leq c_3k$$
\end{LMA}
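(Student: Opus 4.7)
The plan is to telescope the trace of the Block BFGS update. Taking the trace of both sides of (\ref{eq:BFGS_UPD}) and exploiting the cyclic invariance of $\opn{Tr}$, the update becomes
\begin{equation*}
\opn{Tr}(B_{k+1}) = \opn{Tr}(B_k) - \opn{Tr}(D_k^TB_k^2D_k(D_k^TB_kD_k)^{-1}) + \opn{Tr}(D_k^TG_k^2D_k(D_k^TG_kD_k)^{-1}).
\end{equation*}
Both bounds in the lemma will fall out of this identity once I show that the $G_k$-term is bounded by $qM$: dropping the nonnegative middle term gives a linear-in-$k$ bound on $\opn{Tr}(B_k)$, while dropping $\opn{Tr}(B_{k+1}) \ge 0$ gives the bound on the sum.

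The key step is the estimate $\opn{Tr}(D_k^TG_k^2D_k(D_k^TG_kD_k)^{-1}) \le qM$. For this I would use Assumption 1: convexity gives $G_k \succeq 0$, and the Hessian bound gives $G_k \preceq MI$, which together imply (by diagonalizing $G_k$ and comparing eigenvalues in $[0,M]$) $G_k^2 \preceq MG_k$. Conjugating by $D_k$ preserves the inequality, so $D_k^TG_k^2D_k \preceq MD_k^TG_kD_k$. Since $D_k^TG_kD_k \succ 0$ — ensured by the exit condition $\sigma_i^2 \ge \tau\|s_i\|^2 > 0$ in \textsc{filtersteps}, which makes its $L\Sigma L^T$ decomposition well-defined with $\Sigma \succ 0$ — I can pre- and post-multiply by $(D_k^TG_kD_k)^{-1/2}$ and take traces, obtaining $\opn{Tr}(D_k^TG_k^2D_k(D_k^TG_kD_k)^{-1}) \le q_k M \le qM$, where $q_k$ is the number of columns of $D_k$.

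With this bound in hand, the rest is a telescoping argument. Summing the trace identity from $j=1$ to $k$ yields
\begin{equation*}
\opn{Tr}(B_{k+1}) + \sum_{j=1}^{k}\opn{Tr}(D_j^TB_j^2D_j(D_j^TB_jD_j)^{-1}) \le \opn{Tr}(B_1) + kqM.
\end{equation*}
Each summand is nonnegative because $B_j \succ 0$ by Lemma \ref{BPosDef}, so dropping the sum yields $\opn{Tr}(B_{k+1}) \le \opn{Tr}(B_1) + kqM \le c_3(k+1)$, which after re-indexing is the first claim; dropping $\opn{Tr}(B_{k+1}) \ge 0$ instead yields the second. I do not anticipate a serious obstacle — the only nontrivial point is the operator inequality $G_k^2 \preceq MG_k$, which is routine from the spectral decomposition of $G_k$.
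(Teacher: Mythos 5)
Your proof is correct and follows essentially the same route as the paper: telescope the trace of the update (\ref{eq:BFGS_UPD}) and bound the added term by $qM$. The only cosmetic difference is in that bound --- the paper writes $G_jD_j(D_j^TG_jD_j)^{-1}D_j^TG_j = G_j^{1/2}P_jG_j^{1/2}$ for a rank-$\le q$ projection $P_j$ and uses $\opn{Tr} \le \opn{rank}\cdot\|\cdot\|$, whereas you use the L\"{o}wner inequality $G_k^2 \preceq MG_k$ and conjugation by $(D_k^TG_kD_k)^{-1/2}$; both are routine and equivalent.
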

\begin{proof}
	Clearly $\opn{Tr}(B_1) \leq c_3$. Define $E_j = G_j^\half D_j$, and let $P_j = E_j(E_j^TE_j)^{-1}E_j^T$ be the orthogonal projection onto $\opn{Col}(E_j)$, so that $G_jD_j(D_j^TG_jD_j)^{-1}D_j^TG_j = G_j^\half P_j G_j^\half$. For $k \geq 1$, we expand $\opn{Tr}(B_{k+1})$ using Equation~(\ref{eq:BFGS_UPD}):
	\begin{align*}
		0 < \opn{Tr}(B_{k+1}) &= \opn{Tr}(B_1) + \sum_{j=1}^{k} \opn{Tr}(G_j^\half P_j G_j^\half) - \sum_{j=1}^{k} \opn{Tr}(D_j^TB_j^2D_j(D_j^TB_jD_j)^{-1}) \\
		&\leq \opn{Tr}(B_1) + k(qM) - \sum_{j=1}^{k} \opn{Tr}(D_j^TB_j^2D_j(D_j^TB_jD_j)^{-1})
	\end{align*}
	where the first inequality follows from the positive definiteness of $B_{k+1}$ (Lemma~\ref{BPosDef}) and the second inequality follows since $\opn{rank}(P_j)\leq q$, and $\|G_j^\half P_j G_j^\half\| \leq \|G_j\| \|P_j\| \leq M$. This shows $\opn{Tr}(B_{k+1}) \leq c_3(k+1)$ and $\sum_{j=1}^{k} \opn{Tr}(D_j^TB_j^2D_j(D_j^TB_jD_j)^{-1}) \leq c_3k$.
\end{proof}

\begin{LMA}\label{STEPTRACE}
	Let $\ux{s}{k}{i}$ be a step included in $D_k$. Then
	$$\frac{\ux{\lambda}{k}{i} \|\ux{g}{k}{i}\|^2}{ \la -\ux{g}{k}{i}, \ux{s}{k}{i} \ra} \leq \opn{Tr}(D_k^TB_k^2D_k(D_k^TB_kD_k)^{-1})$$
\end{LMA}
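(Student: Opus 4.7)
The plan is to express the left-hand side as a Rayleigh-type quotient tied to a single column of $D_k$, and then bound that quotient by the trace on the right. First, I would exploit the defining relation $\ux{s}{k}{i} = -\ux{\lambda}{k}{i} B_k^{-1} \ux{g}{k}{i}$, which gives $\ux{g}{k}{i} = -B_k \ux{s}{k}{i}/\ux{\lambda}{k}{i}$. Substituting this into the numerator and denominator,
\[
\|\ux{g}{k}{i}\|^2 = \frac{(\ux{s}{k}{i})^T B_k^2 \ux{s}{k}{i}}{(\ux{\lambda}{k}{i})^2}, \qquad \la -\ux{g}{k}{i}, \ux{s}{k}{i} \ra = \frac{(\ux{s}{k}{i})^T B_k \ux{s}{k}{i}}{\ux{\lambda}{k}{i}},
\]
so after cancellation the step size drops out and the left-hand side simplifies to $(\ux{s}{k}{i})^T B_k^2 \ux{s}{k}{i} / (\ux{s}{k}{i})^T B_k \ux{s}{k}{i}$.

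Next, since $\ux{s}{k}{i}$ is a column of $D_k$, I write $\ux{s}{k}{i} = D_k e_j$ for the appropriate standard basis vector $e_j$, so the numerator and denominator become the $j$-th diagonal entries of $A := D_k^T B_k^2 D_k$ and $C := D_k^T B_k D_k$. Note that $A = (B_k D_k)^T (B_k D_k)$ is positive semidefinite, and $C$ is positive definite: $B_k \succ 0$ by Lemma~\ref{BPosDef}, and \textsc{filtersteps} enforces $\sigma_i^2 \geq \tau \|s_i\|^2 > 0$ in the $L\Sigma L^T$ decomposition of $D_k^T G_k D_k$, which guarantees that $D_k$ has linearly independent columns (any $z$ with $D_k z = 0$ would force $G_k^{1/2} D_k z = 0$, contradicting the positive definiteness of $D_k^T G_k D_k$).

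The main step is then a Rayleigh-quotient comparison. Set $M := C^{-1/2} A C^{-1/2}$, which is symmetric positive semidefinite, and $y := C^{1/2} e_j$. Then
\[
\frac{A_{jj}}{C_{jj}} = \frac{e_j^T A e_j}{e_j^T C e_j} = \frac{y^T M y}{y^T y} \leq \lambda_{\max}(M) \leq \opn{Tr}(M),
\]
where the last inequality uses $M \succeq 0$. Using cyclicity of the trace, $\opn{Tr}(M) = \opn{Tr}(A C^{-1}) = \opn{Tr}(D_k^T B_k^2 D_k (D_k^T B_k D_k)^{-1})$, which gives the claimed bound.

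I do not expect any serious obstacle; the only care needed is in checking that $D_k^T B_k D_k$ is genuinely invertible so that the right-hand side is well-defined, which is handled by positivity of $B_k$ together with the linear-independence guarantee of \textsc{filtersteps}.
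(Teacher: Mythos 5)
Your proof is correct, and the key step differs from the one in the paper. Both arguments begin the same way, reducing the left-hand side to the ratio $(\ux{s}{k}{i})^TB_k^2\ux{s}{k}{i}/(\ux{s}{k}{i})^TB_k\ux{s}{k}{i}$ via $B_k\ux{s}{k}{i} = -\ux{\lambda}{k}{i}\ux{g}{k}{i}$, but they then diverge. The paper exploits the basis-invariance of $\opn{Tr}(D^TB^2D(D^TBD)^{-1})$: it replaces the columns of $D_k$ by a $B_k$-conjugate basis of $\opn{Col}(D_k)$ whose first vector is $\ux{s}{k}{i}$, so that $D_k^TB_kD_k$ becomes diagonal, the trace splits into a sum of non-negative terms $\|B_kv_\ell\|^2/\la v_\ell, B_kv_\ell\ra$, and the first term is exactly the quantity to be bounded. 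You instead keep $D_k$ as is and bound the generalized Rayleigh quotient $e_j^TAe_j/e_j^TCe_j$ by $\lambda_{\max}(C^{-1/2}AC^{-1/2}) \leq \opn{Tr}(AC^{-1})$, using $A = D_k^TB_k^2D_k \succeq 0$ and $C = D_k^TB_kD_k \succ 0$. Your route avoids the Gram--Schmidt construction entirely and yields the slightly stronger fact that the bound holds for the Rayleigh quotient at \emph{any} vector, not just a column of $D_k$; the paper's route gives the extra structural information that the trace decomposes into a sum of analogous ratios over conjugate directions, which mirrors the classical BFGS trace analysis but is not needed for the stated inequality. Your attention to the invertibility of $D_k^TB_kD_k$ (via the positive definiteness of $D_k^TG_kD_k$ guaranteed by \textsc{filtersteps}, hence full column rank of $D_k$) is a point the paper leaves implicit, and it is handled correctly.
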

\begin{proof}
	By the Gram-Schmidt process applied to the columns of $D_k$, we can find a set of $B_k$-conjugate vectors $\{v_1,\ldots,v_{q_k}\}$ spanning $\opn{Col}(D_k)$ with $v_1 = \ux{s}{k}{i}$. Using the matrix $\lb v_1 \ldots v_{q_k}\rb$ for $D_k$, we have
	\begin{align*}
		D_k^TB_kD_k &= \opn{Diag}( \la \ux{s}{k}{i}, -\ux{\lambda}{k}{i}\ux{g}{k}{i} \ra, \la v_2, B_kv_2\ra, \ldots, \la v_{q_k}, B_kv_{q_k} \ra)
	\end{align*}
	and therefore
	\begin{align*}
		\opn{Tr}(D_k^TB_k^2D_k(D_k^TB_kD_k)^{-1}) &= \sum_{\ell=1}^{q_k} \lb D_k^TB_k^2D_k \rb_{\ell \ell} \lb D_k^TB_kD_k \rb_{\ell \ell}^{-1} \\
		&= \frac{ (\ux{\lambda}{k}{i} \|\ux{g}{k}{i}\|)^2}{\ux{\lambda}{k}{i} \la -\ux{g}{k}{i}, \ux{s}{k}{i} \ra} + \sum_{\ell=2}^{q_k} \frac{\|B_kv_\ell\|^2}{\la v_\ell, B_kv_\ell \ra} \geq \frac{\ux{\lambda}{k}{i} \|\ux{g}{k}{i}\|^2}{ \la -\ux{g}{k}{i}, \ux{s}{k}{i} \ra}
	\end{align*}
\end{proof}
We may assume without loss of generality that $D_k = \lb \ux{s}{k}{1} \ldots \ux{s}{k}{q_k}\rb$.
\begin{CORO}\label{TRPROD}
	$$\prod_{j=1}^{k} \prod_{i=1}^{q_j} \frac{\ux{\lambda}{j}{i} \|\ux{g}{j}{i}\|^2}{ \la -\ux{g}{j}{i}, \ux{s}{j}{i} \ra} \leq (qc_3)^{qk}$$
\end{CORO}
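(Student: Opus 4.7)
The plan is to combine the per-step bound of Lemma~\ref{STEPTRACE} with the cumulative trace bound of Lemma~\ref{TRACE}, joining them via the arithmetic--geometric mean inequality. Write $a_{j,i} = \ux{\lambda}{j}{i}\|\ux{g}{j}{i}\|^2 / \la -\ux{g}{j}{i}, \ux{s}{j}{i} \ra$ and $T_j = \opn{Tr}(D_j^TB_j^2D_j(D_j^TB_jD_j)^{-1})$. Using the convention $D_j = \lb \ux{s}{j}{1} \ldots \ux{s}{j}{q_j}\rb$ noted just before the statement, Lemma~\ref{STEPTRACE} applies to every column of $D_j$, yielding $a_{j,i} \leq T_j$ for each $i \in \{1,\ldots,q_j\}$.

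Summing these inequalities within block $j$, using $q_j \leq q$, and then summing over blocks via Lemma~\ref{TRACE} gives
$$\sum_{j=1}^k \sum_{i=1}^{q_j} a_{j,i} \;\leq\; \sum_{j=1}^k q_j T_j \;\leq\; q\sum_{j=1}^k T_j \;\leq\; qc_3 k.$$
Let $N_k := \sum_{j=1}^k q_j$; under the standing assumption that an update is performed at every iteration, $k \leq N_k \leq qk$. Applying AM--GM to the $N_k$ nonnegative terms,
$$\prod_{j=1}^k \prod_{i=1}^{q_j} a_{j,i} \;\leq\; \left(\frac{1}{N_k}\sum_{j,i} a_{j,i}\right)^{N_k} \;\leq\; \left(\frac{qc_3 k}{N_k}\right)^{N_k} \;=\; (qc_3)^{N_k}\left(\frac{k}{N_k}\right)^{N_k}.$$
Since $k/N_k \leq 1$ and $qc_3 \geq 1$ (which may be assumed WLOG, as we can enlarge $c_3$ without affecting Lemma~\ref{TRACE}), the right-hand side is at most $(qc_3)^{N_k} \leq (qc_3)^{qk}$, as required.

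The main obstacle is the uneven block sizes $q_j$: Lemma~\ref{STEPTRACE} furnishes $q_j$ inequalities within block $j$, all bounded by the \emph{same} quantity $T_j$, so a direct per-block product gives $\prod_j T_j^{q_j}$ with a varying exponent that resists clean weighted-mean manipulation. Invoking AM--GM globally to all $N_k$ terms at once circumvents this, at the cost of the loose prefactor $q$ appearing in $(qc_3)^{qk}$, which arises precisely from bounding $q_j \leq q$ when converting the intra-block sum into Lemma~\ref{TRACE}'s cumulative trace bound.
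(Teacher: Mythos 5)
Your proof is correct and follows essentially the same route as the paper: apply Lemma~\ref{STEPTRACE} to each column of $D_j$, sum using $q_j \le q$ and Lemma~\ref{TRACE} to bound the total by $qc_3k$, and finish with AM--GM over the $\sum_j q_j$ terms. The only difference is that you make explicit the harmless assumption $qc_3 \ge 1$ needed for the final exponent comparison, which the paper leaves implicit.
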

\begin{proof}
	Let $\wh{q}_k = \sum_{j=1}^{k} q_j$, and note that $k \leq \wh{q}_k \leq qk$. Hence, from Lemmas~\ref{TRACE} and \ref{STEPTRACE},
	$$\frac{1}{\wh{q}_k} \sum_{j=1}^{k} \sum_{i=1}^{q_j} \frac{\ux{\lambda}{j}{i} \|\ux{g}{j}{i}\|^2}{ \la -\ux{g}{j}{i}, \ux{s}{j}{i} \ra} \leq \frac{qk}{\wh{q}_k} c_3 \leq qc_3$$
	Applying the arithmetic mean-geometric mean (AM-GM) inequality,
	$$ \left(\prod_{j=1}^{k} \prod_{i=1}^{q_j} \frac{\ux{\lambda}{j}{i} \|\ux{g}{j}{i}\|^2}{ \la -\ux{g}{j}{i}, \ux{s}{j}{i} \ra} \right) \leq (qc_3)^{\wh{q}_k} \leq (qc_3)^{qk}.$$
\end{proof}

\begin{LMA}\label{DETBD}
	$\opn{det}(B_k) \leq \left( \frac{c_3k}{n} \right)^n$ for all $k$.
\end{LMA}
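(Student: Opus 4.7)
The plan is a very short application of the AM-GM inequality to the eigenvalues of $B_k$, combined with the trace bound already established in Lemma~\ref{TRACE}.

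First I would note that $B_k$ is symmetric positive definite for every $k$. This follows by induction: $B_1$ is chosen to be positive definite, and by Lemma~\ref{BPosDef}, each update preserves positive definiteness (since $D_k^T G_k D_k \succ 0$ is exactly what \textsc{filtersteps} guarantees via the threshold condition $\sigma_i^2 \geq \tau \|s_i\|^2 > 0$), while skipped updates trivially preserve it.

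Next, let $\mu_1,\ldots,\mu_n > 0$ denote the eigenvalues of $B_k$. Then $\opn{Tr}(B_k) = \sum_{i=1}^n \mu_i$ and $\opn{det}(B_k) = \prod_{i=1}^n \mu_i$. By the AM-GM inequality applied to these positive numbers,
$$\opn{det}(B_k) = \prod_{i=1}^n \mu_i \leq \left(\frac{1}{n}\sum_{i=1}^n \mu_i\right)^n = \left(\frac{\opn{Tr}(B_k)}{n}\right)^n.$$
Finally, substituting the bound $\opn{Tr}(B_k) \leq c_3 k$ from Lemma~\ref{TRACE} gives $\opn{det}(B_k) \leq (c_3 k/n)^n$, as desired.

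There is no genuine obstacle here; the lemma is essentially a one-line corollary of Lemma~\ref{TRACE} once positive definiteness is in hand. The only point meriting a sentence of care is justifying that all eigenvalues are positive so that AM-GM applies, but this is immediate from Lemma~\ref{BPosDef}.
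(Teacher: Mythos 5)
Your proof is correct and is essentially identical to the paper's: both apply the AM-GM inequality to the eigenvalues of $B_k$ and invoke the trace bound $\opn{Tr}(B_k) \leq c_3 k$ from Lemma~\ref{TRACE}. Your extra sentence justifying positive definiteness via Lemma~\ref{BPosDef} is a harmless (and slightly more careful) addition that the paper leaves implicit.
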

\begin{proof}
	By Lemma~\ref{TRACE}, $\opn{Tr}(B_k) \leq c_3k$. Recall that the trace is equal to the sum of the eigenvalues, and the determinant to the product. Applying the AM-GM inequality to the eigenvalues of $B_k$, we obtain $\opn{det}(B_k) \leq \left( \frac{c_3k}{n} \right)^n$.
\end{proof}

We will need the following two classical results from matrix theory; see \cite{HJ}.

\noindent \textbf{Sylvester's Determinant Identity} Let $A \in \RR^{n \times m}, B \in \RR^{m \times n}$. Then $$\opn{det}(I_n + AB) = \opn{det}(I_m + BA)$$

\noindent \textbf{Sherman-Morrison-Woodbury Formula} Let $A \in \RR^{n \times n}$ and $C \in \RR^{k \times k}$ be invertible, and $U \in \RR^{n \times k}, V \in \RR^{k \times n}$. If $A + UCV$ and $C^{-1} + VA^{-1}U$ are invertible, then $(A + UCV)^{-1} = A^{-1} - A^{-1}U(C^{-1} + VA^{-1}U)^{-1}VA^{-1}$.

\begin{LMA}\label{DETUPDATE}
	$$\opn{det}(B_{k+1}) = \frac{ \opn{det}(D_k^TG_kD_k)}{\opn{det}(D_k^TB_kD_k)} \opn{det}(B_k) $$
\end{LMA}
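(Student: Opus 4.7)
My plan is to evaluate $\det(B_{k+1})$ by conjugating $B_{k+1}$ by an auxiliary invertible matrix that puts $\mathrm{Col}(D_k)$ into coordinate form, then read off the determinant via a Schur complement.

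\medskip

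\noindent\textbf{Setup.} Choose any $n\times(n-q_k)$ matrix $W$ whose columns form a basis of a complement of $\mathrm{Col}(D_k)$, and set $T=[\,D_k\ \ W\,]$, which is invertible since $D_k$ has full column rank $q_k$ (by the filtering procedure). Then $\det(T^TB_{k+1}T)=\det(T)^2\det(B_{k+1})$ and likewise for $B_k$, so it suffices to prove
$$\det(T^TB_{k+1}T) \;=\; \frac{\det(D_k^TG_kD_k)}{\det(D_k^TB_kD_k)}\,\det(T^TB_kT).$$

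\medskip

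\noindent\textbf{Block-form computation.} Using the sketching equation $B_{k+1}D_k=G_kD_k$ (equation~(\ref{eq:SKETCH})), compute
$$T^TB_{k+1}T=\begin{pmatrix} D_k^TG_kD_k & D_k^TG_kW \\ W^TG_kD_k & W^TB_{k+1}W\end{pmatrix},\qquad T^TB_kT=\begin{pmatrix} D_k^TB_kD_k & D_k^TB_kW \\ W^TB_kD_k & W^TB_kW\end{pmatrix}.$$
Apply the Schur complement determinant identity to both. For $B_{k+1}$, the top-left block is $D_k^TG_kD_k$ (positive definite since filtering guarantees $D_k^TG_kD_k\succ0$), so
$$\det(T^TB_{k+1}T)=\det(D_k^TG_kD_k)\cdot\det\!\big(W^TB_{k+1}W - W^TG_kD_k(D_k^TG_kD_k)^{-1}D_k^TG_kW\big).$$
For $B_k$, the top-left block is $D_k^TB_kD_k\succ0$, so
$$\det(T^TB_kT)=\det(D_k^TB_kD_k)\cdot\det\!\big(W^TB_kW - W^TB_kD_k(D_k^TB_kD_k)^{-1}D_k^TB_kW\big).$$

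\medskip

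\noindent\textbf{Matching the Schur complements.} The crux is to observe that the two Schur complements above are equal. Multiplying the update formula (\ref{eq:BFGS_UPD}) on the left by $W^T$ and on the right by $W$ yields
$$W^TB_{k+1}W=W^TB_kW - W^TB_kD_k(D_k^TB_kD_k)^{-1}D_k^TB_kW + W^TG_kD_k(D_k^TG_kD_k)^{-1}D_k^TG_kW,$$
so subtracting $W^TG_kD_k(D_k^TG_kD_k)^{-1}D_k^TG_kW$ from both sides gives exactly the Schur complement appearing in the $B_k$ expression. Dividing the two determinant identities and canceling the common Schur complement factor (which is nonzero, being the determinant of a principal submatrix of a positive definite matrix in the case of $B_k$) delivers the claimed formula.

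\medskip

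\noindent\textbf{Anticipated difficulty.} There is no serious obstacle—the whole proof is essentially algebraic. The one point to be a little careful about is verifying that the Schur complement $W^TB_kW-W^TB_kD_k(D_k^TB_kD_k)^{-1}D_k^TB_kW$ is nonzero so that we may cancel it; but this is automatic because $T^TB_kT$ is positive definite (by Lemma~\ref{BPosDef} and invertibility of $T$) and its Schur complement inherits positive definiteness. Alternatively, one could avoid the division entirely by reading off $\det(B_{k+1})/\det(B_k)$ directly from the ratio of the two block-determinant identities.
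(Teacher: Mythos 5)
Your proof is correct, and it takes a genuinely different route from the paper's. The paper works directly with $n\times n$ determinants: it factors out $\opn{det}(B_k)$, splits the rank-$2q_k$ perturbation of the identity into two pieces, and evaluates each via Sylvester's determinant identity together with the Sherman--Morrison--Woodbury formula. You instead conjugate by $T=[\,D_k\ W\,]$ and reduce everything to a $2\times 2$ block determinant; the key observation is that the sketching equation $B_{k+1}D_k=G_kD_k$ fixes the first block row/column of $T^TB_{k+1}T$, while the update formula makes the Schur complements of $T^TB_{k+1}T$ and $T^TB_kT$ with respect to their leading blocks literally identical, so the determinant ratio is exactly $\opn{det}(D_k^TG_kD_k)/\opn{det}(D_k^TB_kD_k)$. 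Your argument is arguably more transparent about \emph{why} the ratio takes this form (the update only alters the quadratic form on $\opn{Col}(D_k)$ relative to its Schur complement), and it avoids the slightly delicate Woodbury manipulation; the paper's computation, in exchange, avoids introducing the auxiliary complement $W$ and the (harmless) degenerate case $q_k=n$, where $W$ is empty and the Schur-complement factor must be read as the determinant of a $0\times 0$ matrix. All the side conditions you invoke check out: filtering gives $D_k^TG_kD_k\succ 0$, hence $D_k$ has full column rank, $T$ is invertible, and the common Schur complement is nonsingular because it is the Schur complement of the positive definite matrix $T^TB_kT$.
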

\begin{proof}
	Let $B = B_k, B^+ = B_{k+1}, D = D_k, G = G_k$. Then
	$$\opn{det}(B^+) = \opn{det}(B) \opn{det}(I + B^{-\half} GD(D^TGD)^{-1}D^TGB^{-\half} - B^\half D(D^TBD)^{-1}D^TB^\half).$$
	Define $X = B^{-\half} GD(D^TGD)^{-1}D^TGB^{-\half}$ and $Y = D^TGD + D^TGB^{-1}GD$. Note that $I + X$ is invertible since $X \succeq 0$ and $I \succ 0$, and $Y$ is invertible since $D^TGD \succ 0$. Thus, we can write
	$$\opn{det}(B^+) = \opn{det}(B) \opn{det}(I + X)\opn{det}(I - (I + X)^{-1}B^\half D(D^TBD)^{-1}D^TB^\half).$$
	Applying Sylvester's determinant identity to each term,
	\begin{align*}
		&\opn{det}(I + X) = \opn{det}(I + (D^TGB^{-\half})(B^{-\half}GD(D^TGD)^{-1})) = \opn{det}(Y)\opn{det}(D^TGD)^{-1} \\
		&\opn{det}(I - (I + X)^{-1}B^\half D(D^TBD)^{-1}D^TB^\half) = \opn{det}(I - D^TB^\half (I + X)^{-1}B^\half D(D^TBD)^{-1})
	\end{align*}
	Applying the Sherman-Morrison-Woodbury formula to $I + X$ with $U = B^{-\half} GD, C = (D^TGD)^{-1}, V = D^TGB^{-\half}$, we obtain $(I + X)^{-1} = I - B^{-\half}GDY^{-1}D^TGB^{-\half}$, so
	\begin{align*}
		\opn{det}(I - (I + X)^{-1}B^\half D(D^TBD)^{-1}D^TB^\half) = \opn{det}(D^TGD)^2 \opn{det}(Y)^{-1} \opn{det}(D^TBD)^{-1}.
	\end{align*}
	Thus $\opn{det}(B^+) = \opn{det}(B) \opn{det}(D^TGD)\opn{det}(D^TBD)^{-1}$ as desired.
\end{proof}

\begin{LMA}\label{DET}
	$$\opn{det}(B_{k+1}) \geq  \left(\prod_{i=1}^{q_k} \frac{1}{\lambda_i} \right) (\tau c_1)^{q_k} \opn{det}(B_k)$$
\end{LMA}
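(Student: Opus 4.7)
The plan is to apply Lemma~\ref{DETUPDATE} to rewrite
$$\opn{det}(B_{k+1}) = \opn{det}(B_k) \cdot \frac{\opn{det}(D_k^TG_kD_k)}{\opn{det}(D_k^TB_kD_k)},$$
after which it suffices to bound the numerator below and the denominator above by the corresponding factors in the claimed inequality.

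For the numerator, I would invoke the $L\Sigma L^T$ decomposition of $D_k^TG_kD_k$ that \textsc{filtersteps} has already produced. Since $L$ is unit lower triangular, $\opn{det}(D_k^TG_kD_k) = \prod_{i=1}^{q_k}\sigma_i^2$, and every column retained by the filter satisfies $\sigma_i^2 \geq \tau\|s_i\|^2$, so
$$\opn{det}(D_k^TG_kD_k) \geq \tau^{q_k}\prod_{i=1}^{q_k}\|\ux{s}{k}{i}\|^2.$$

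For the denominator, I would use Hadamard's inequality, valid because the filter guarantees $D_k$ has full column rank and hence $D_k^TB_kD_k \succ 0$. This yields $\opn{det}(D_k^TB_kD_k) \leq \prod_i \lb D_k^TB_kD_k \rb_{ii}$. The key observation is that the quasi-Newton relation $\ux{s}{k}{i} = -\ux{\lambda}{k}{i}B_k^{-1}\ux{g}{k}{i}$ collapses each diagonal entry to
$$\lb D_k^TB_kD_k \rb_{ii} = (\ux{s}{k}{i})^TB_k\ux{s}{k}{i} = \ux{\lambda}{k}{i}\la -\ux{g}{k}{i}, \ux{s}{k}{i}\ra,$$
so the denominator is at most $\prod_i \ux{\lambda}{k}{i}\la -\ux{g}{k}{i},\ux{s}{k}{i}\ra$. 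Combining this with the rearranged form $\|\ux{s}{k}{i}\|^2 \geq c_1\la -\ux{g}{k}{i},\ux{s}{k}{i}\ra$ of Lemma~\ref{LIPSCH_KEY} (multiply $\|s\| \geq c_1\|g\|\cos\theta$ by $\|s\|$ and use $\|g\|\|s\|\cos\theta = \la -g, s\ra$) cancels the inner products and yields the claimed bound.

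The only real subtlety is recognizing Hadamard's inequality as the right tool for the denominator; one might reflexively try to apply $L\Sigma L^T$ analogously to $D_k^TB_kD_k$ and fight with off-diagonal terms, but Hadamard is tailor-made here because the columns of $D_k$ are Newton-style steps for $B_k$, making the diagonal entries themselves the quantities that match up with the numerator and with Lemma~\ref{LIPSCH_KEY}.
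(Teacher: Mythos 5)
Your proposal is correct and follows essentially the same route as the paper: apply Lemma~\ref{DETUPDATE}, lower-bound $\opn{det}(D_k^TG_kD_k)=\prod_i\sigma_i^2$ via the filter threshold, upper-bound $\opn{det}(D_k^TB_kD_k)$ by the product of its diagonal entries $\ux{\lambda}{k}{i}\la -\ux{g}{k}{i},\ux{s}{k}{i}\ra$, and finish with Lemma~\ref{LIPSCH_KEY}. The only cosmetic difference is that you name Hadamard's inequality explicitly where the paper uses it silently.
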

\begin{proof}
	Recall that the columns of $D_k$ satisfy $\sigma_i^2 \geq \tau \|\ux{s}{k}{i}\|^2$, where $\sigma_i$ is the $i$-th diagonal element of the $L\Sigma L^T$ decomposition of $D_k^TG_kD_k$. We have $\opn{det}(D_k^TG_kD_k) = \prod_{i=1}^{q_k} \sigma_i^2$ and $\opn{det}(D_k^TB_kD_k) \leq \prod_{i=1}^{q_k} \lb D_k^TB_kD_k\rb_{ii} = \prod_{i=1}^{q_k} \la \ux{s}{k}{i}, -\ux{\lambda}{k}{i} \ux{g}{k}{i} \ra$. By Lemma~\ref{DETUPDATE},
	\begin{align*}
		\opn{det}(B_{k+1}) &= \opn{det}(B_k) \frac{\opn{det}(D_k^TG_kD_k)}{\opn{det}(D_k^TB_kD_k)} \\
		&\geq \opn{det}(B_k) \frac{ \prod_{i=1}^{q_k} \tau \|\ux{s}{k}{i}\|^2}{\prod_{i=1}^{q_k} \la \ux{s}{k}{i}, -\ux{\lambda}{k}{i} \ux{g}{k}{i} \ra} \geq  \opn{det}(B_k)\prod_{i=1}^{q_k} \frac{\tau}{\ux{\lambda}{k}{i}} \frac{\|\ux{s}{k}{i}\|}{\|\ux{g}{k}{i}\| \cos \ux{\theta}{k}{i} }.
	\end{align*}
	By Lemma \ref{LIPSCH_KEY}, $\frac{\|\ux{s}{k}{i}\|}{\|\ux{g}{k}{i}\| \cos \ux{\theta}{k}{i}} \geq c_1$. Hence $\opn{det}(B_{k+1})  \geq  \left(  \prod\limits_{i=1}^{q_k} \frac{1}{\ux{\lambda}{k}{i}} \right)(\tau c_1)^{q_k} \opn{det}(B_k)$.
\end{proof}

\begin{CORO}\label{DETPROD}
	$$\opn{det}(B_{k+1}) \geq (\tau c_1)^{qk} \opn{det}(B_1) \prod_{j=1}^{k} \prod_{i=1}^{q_j} \frac{1}{\ux{\lambda}{j}{i}} $$
\end{CORO}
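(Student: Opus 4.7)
The proof of Corollary~\ref{DETPROD} is essentially a routine induction that unrolls Lemma~\ref{DET} along the sequence $B_1, B_2, \ldots, B_{k+1}$. The plan is as follows. I would argue by induction on $k$, with the base case $k=0$ being the trivial identity $\opn{det}(B_1) = \opn{det}(B_1)$ (with an empty product of line-search step sizes and exponent $0$ on $\tau c_1$). For the inductive step, assume the inequality holds with $k$ replaced by $k-1$; applying Lemma~\ref{DET} to bound $\opn{det}(B_{k+1})$ below by $(\tau c_1)^{q_k} \opn{det}(B_k) \prod_{i=1}^{q_k} (\ux{\lambda}{k}{i})^{-1}$ and substituting the inductive hypothesis for $\opn{det}(B_k)$ immediately yields
\[
\opn{det}(B_{k+1}) \geq (\tau c_1)^{\wh{q}_k} \opn{det}(B_1) \prod_{j=1}^{k} \prod_{i=1}^{q_j} \frac{1}{\ux{\lambda}{j}{i}},
\]
where $\wh{q}_k = \sum_{j=1}^{k} q_j$ as in the proof of Corollary~\ref{TRPROD}.

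The one bookkeeping point that must be addressed is that the stated exponent on $\tau c_1$ in the corollary is $qk$, not $\wh{q}_k$. Since $q_j \le q$ for every $j$, we have $\wh{q}_k \le qk$; to convert $(\tau c_1)^{\wh{q}_k}$ into the stated factor $(\tau c_1)^{qk}$ while preserving the direction of the inequality, one assumes (or notes without loss of generality, by shrinking $\tau$ if necessary) that $\tau c_1 \le 1$, in which case raising to the larger exponent $qk$ only decreases the bound. This is the only substantive step beyond mechanical iteration of Lemma~\ref{DET}.

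I do not anticipate any genuine obstacle here; Lemma~\ref{DET} already does all the real work of controlling a single determinant ratio via the filter threshold $\tau$ and the Lipschitz-based bound $c_1$ of Lemma~\ref{LIPSCH_KEY}, and the corollary simply accumulates these bounds telescopically. The only mild subtlety is the uniformization of the exponent via $\wh{q}_k \le qk$, which parallels exactly the uniformization used in Corollary~\ref{TRPROD} to pass from $\wh{q}_k$ to $qk$ in the AM-GM argument. Accordingly, I would present the proof as a one-line induction followed by the $\wh{q}_k \le qk$ observation.
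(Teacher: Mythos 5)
Your proof is correct and matches the paper's (implicit) argument exactly: the paper states \Cref{DETPROD} without proof as the immediate iteration of \Cref{DET}, which is precisely your induction. Your handling of the exponent via $\wh{q}_k \le qk$ is the right bookkeeping, and the needed condition $\tau c_1 \le 1$ holds automatically whenever any update occurs, since acceptance of a column by \textsc{filtersteps} forces $\tau \le M$ and hence $\tau c_1 \le 1 - \beta < 1$.
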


\begin{CORO}\label{LASTBOUND}
	There exists a constant $c_4$ such that for all $k$,
	$$\prod_{j=1}^{k} \prod_{i=1}^{q_j} \frac{\|\ux{g}{j}{i}\|^2}{ \la -\ux{g}{j}{i}, \ux{s}{j}{i} \ra} \leq c_4^k$$
\end{CORO}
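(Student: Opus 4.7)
The plan is to combine the preceding two corollaries to clear the $\ux{\lambda}{j}{i}$ factors from Corollary~\ref{TRPROD}, and then use Lemma~\ref{DETBD} to absorb the determinantal growth.

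Rearranging Corollary~\ref{TRPROD}, we obtain
$$\prod_{j=1}^{k} \prod_{i=1}^{q_j} \frac{\|\ux{g}{j}{i}\|^2}{\la -\ux{g}{j}{i}, \ux{s}{j}{i} \ra} \leq (qc_3)^{qk} \prod_{j=1}^{k} \prod_{i=1}^{q_j} \frac{1}{\ux{\lambda}{j}{i}},$$
so the task reduces to bounding $\prod_j \prod_i 1/\ux{\lambda}{j}{i}$. Corollary~\ref{DETPROD} gives precisely such a bound in terms of $\opn{det}(B_{k+1})$: specifically
$$\prod_{j=1}^{k} \prod_{i=1}^{q_j} \frac{1}{\ux{\lambda}{j}{i}} \leq \frac{\opn{det}(B_{k+1})}{(\tau c_1)^{qk}\opn{det}(B_1)}.$$

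Now I invoke Lemma~\ref{DETBD}, which tells us $\opn{det}(B_{k+1}) \leq (c_3(k+1)/n)^n$, a polynomial in $k$. Substituting this into the previous inequality, multiplying through, and absorbing $\opn{det}(B_1)^{-1}$ into the constants yields
$$\prod_{j=1}^{k} \prod_{i=1}^{q_j} \frac{\|\ux{g}{j}{i}\|^2}{\la -\ux{g}{j}{i}, \ux{s}{j}{i} \ra} \leq \frac{(qc_3)^{qk}}{(\tau c_1)^{qk}\opn{det}(B_1)}\left(\frac{c_3(k+1)}{n}\right)^n.$$

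The only remaining issue is cosmetic: the right-hand side is an exponential in $k$ times a polynomial in $k$, not purely an exponential of the form $c_4^k$. Since any polynomial is dominated by any strictly larger exponential, for any $\rho > 1$ there is a constant $K$ with $(c_3(k+1)/n)^n/\opn{det}(B_1) \leq K\rho^k$ for all $k \geq 1$. Setting $c_4 = \rho\,(qc_3)^q/(\tau c_1)^q$ for a suitable $\rho > 1$, and absorbing the constant $K$ into the bound (either by inflating $c_4$ slightly or by noting the inequality holds for $k$ sufficiently large and adjusting), the claimed bound follows. The main ``obstacle'' is just recognizing that the two previous corollaries are exactly complementary — one bounds $\prod\ux{\lambda}{j}{i}\|g\|^2/\la -g,s\ra$ from above, the other bounds $\prod 1/\ux{\lambda}{j}{i}$ from above — so that their product eliminates the step sizes cleanly.
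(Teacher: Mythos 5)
Your proposal is correct and follows essentially the same route as the paper: both combine Corollary~\ref{TRPROD}, Corollary~\ref{DETPROD}, and Lemma~\ref{DETBD} to cancel the step sizes against the determinant growth, and both absorb the resulting polynomial factor $(c_3(k+1)/n)^n/\opn{det}(B_1)$ into a geometric bound $\rho^k$ before setting $c_4$. The only difference is organizational — the paper multiplies the product by $\opn{det}(B_{k+1})/\opn{det}(B_1)$ and bounds that quantity from both sides, whereas you isolate $\prod 1/\ux{\lambda}{j}{i}$ explicitly — which is immaterial.
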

\begin{proof}
	Multiplying the inequalities of Corollary \ref{TRPROD} and Lemma \ref{DETBD}, we obtain
	$$\left( \prod_{j=1}^{k} \prod_{i=1}^{q_j} \frac{\ux{\lambda}{j}{i} \|\ux{g}{j}{i}\|^2}{ \la -\ux{g}{j}{i}, \ux{s}{j}{i} \ra} \right)  \left( \frac{\opn{det}(B_{k+1})}{\opn{det}(B_1)} \right) \leq (qc_3)^{qk}  \left( \frac{ (c_3(k+1)/n)^n}{\opn{det}(B_1)} \right) \leq \rho_1^k$$
	for some constant $\rho_1$. Using the lower bound of Corollary~\ref{DETPROD}, we also obtain
	\begin{align*}
		\left( \prod_{j=1}^{k} \prod_{i=1}^{q_j} \frac{\ux{\lambda}{j}{i} \|\ux{g}{j}{i}\|^2}{ \la -\ux{g}{j}{i}, \ux{s}{j}{i} \ra} \right)
		\left( \frac{\opn{det}(B_{k+1})}{\opn{det}(B_1)} \right)
		&\geq
		\left( \prod_{j=1}^{k} \prod_{i=1}^{q_j} \frac{\ux{\lambda}{j}{i} \|\ux{g}{j}{i}\|^2}{ \la -\ux{g}{j}{i}, \ux{s}{j}{i} \ra} \right)
		\cdot
		(\tau c_1)^{qk} \prod_{j=1}^{k} \prod_{i=1}^{q_j} \frac{1}{\ux{\lambda}{j}{i}} \\
		&= (\tau c_1)^{qk} \left( \prod_{j=1}^{k} \prod_{i=1}^{q_j} \frac{\|\ux{g}{j}{i}\|^2}{ \la -\ux{g}{j}{i}, \ux{s}{j}{i} \ra} \right)
	\end{align*}
	Take $c_4 = \frac{\rho_1}{(\tau c_1)^q}$, whence $\prod_{j=1}^{k} \prod_{i=1}^{q_j} \frac{\|\ux{g}{j}{i}\|^2}{ \la -\ux{g}{j}{i}, \ux{s}{j}{i} \ra} \leq c_4^k$.
\end{proof}

Finally, we can establish our main result.

\begin{proof}(of Theorem~\ref{CONVERGENT})
	Assume to the contrary that $\|\ux{g}{k}{i}\|$ is bounded away from zero. Lemma \ref{SUMMABLE} implies that $\la \ux{g}{k}{i}, -\ux{s}{k}{i} \ra \rightarrow 0$. Thus, there exists $k_0$ such that for $k \geq k_0$, $\frac{ \|\ux{g}{k}{i}\|^2}{\la \ux{g}{k}{i}, -\ux{s}{k}{i} \ra} > c_4 + 1$. This contradicts Corollary~\ref{LASTBOUND}, as $\prod_{j=1}^{k} \prod_{i=1}^{q_j} \frac{\|\ux{g}{j}{i}\|^2}{ \la -\ux{g}{j}{i}, \ux{s}{j}{i} \ra} \leq c_4^k$ for all $k$. We conclude that $\liminf_k \|g_k\| = 0$.
\end{proof}

A similar analysis shows that Rolling Block BFGS (Section~\ref{sub:RBBFGS}) converges.
\begin{THM}
	Assume $f$ satisfies Assumption 1. Then the sequence $\{g_k\}_{k=1}^\infty$ produced by Rolling Block BFGS satisfies $\liminf_k \|g_k\| = 0$.
\end{THM}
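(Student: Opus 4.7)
The plan is to mirror the proof of Theorem \ref{CONVERGENT}, adapted to the rolling-update setting in which a block update is performed after every step and $D_k$ consists of (a subset of) the $q$ most recent steps. Lemmas \ref{SUMMABLE} and \ref{LIPSCH_KEY} will extend verbatim since they depend only on the Armijo-Wolfe conditions and the Lipschitz continuity of $g$, not on how $B_k$ is updated. Following the convention of Section \ref{sec:CONVERGENT}, I will assume throughout that an update is performed at every iteration and that the newest step $s_k$ is not filtered out of $D_k$, so that $s_k$ is always the first column of $D_k$.

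Next, the trace bound of Lemma \ref{TRACE} and determinant upper bound of Lemma \ref{DETBD} carry over unchanged (with the same constant $c_3 = \opn{Tr}(B_1) + qM$), since each rolling update still contributes a rank-at-most-$q$ positive term of operator norm at most $M$. Combining this with the per-step analog of Lemma \ref{STEPTRACE}---applied with $v_1 = s_k$ in the Gram-Schmidt construction, using $\la s_k, B_k s_k \ra = \lambda_k \la -g_k, s_k \ra$ because $s_k$ was computed from $B_k$---and AM-GM will yield
$$\prod_{k=1}^N \frac{\lambda_k \|g_k\|^2}{\la -g_k, s_k \ra} \leq c_3^N.$$

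The main new ingredient is the analog of Lemma \ref{DET}, which is delicate because the older columns $s_{k-1}, \ldots, s_{k-q+1}$ of $D_k$ were not computed from $B_k$, so the clean identity $\la s_j, B_k s_j \ra = \lambda_j \la -g_j, s_j \ra$ fails. The key observation is that any $s_j \in D_k$ with $j < k$ must already have been a column of $D_{k-1}$, since $D_k$ is formed from $D_{k-1}$ by adding $s_k$ as a new first column and then filtering. Hence the sketching equation $B_k D_{k-1} = G_{k-1} D_{k-1}$ from the update at iteration $k-1$ yields $B_k s_j = G_{k-1} s_j$, giving the $k$-independent bound $\la s_j, B_k s_j \ra \leq M \|s_j\|^2$. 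Combined with the filtering guarantee $\opn{det}(D_k^T G_k D_k) \geq \tau^{q_k} \prod_i \|s_i\|^2$, Hadamard's inequality, and $\|s_k\|^2 / \la -g_k, s_k \ra \geq c_1$ from Lemma \ref{LIPSCH_KEY}, these facts will produce a determinant lower bound of the form $\opn{det}(B_{k+1}) / \opn{det}(B_k) \geq \tau_0 c_1 / (\lambda_k M^{q-1})$ for a constant $\tau_0 > 0$ depending only on $\tau$ and $q$.

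Multiplying this bound over $k$ and comparing with $\opn{det}(B_{N+1}) \leq (c_3 N / n)^n$ from Lemma \ref{DETBD} will give $\prod_{k=1}^N 1/\lambda_k \leq \rho^N \cdot N^n$ for some $\rho > 0$, and combining with the trace-product inequality above will yield
$$\prod_{k=1}^N \frac{\|g_k\|^2}{\la -g_k, s_k \ra} \leq c_4^N N^n$$
for some constant $c_4$. The contradiction then follows as in Theorem \ref{CONVERGENT}: if $\|g_k\| \geq \epsilon > 0$ for all $k$, then Lemma \ref{SUMMABLE} forces $\la -g_k, s_k \ra \to 0$, so the ratios $\|g_k\|^2 / \la -g_k, s_k \ra$ diverge and eventually exceed $2 c_4$, at which point the left-hand product grows at least as $(2c_4)^{N - N_0}$ and dominates $c_4^N N^n$ for large $N$. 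The main obstacle is controlling $\la s_j, B_k s_j \ra$ for the carried-over steps; the sketching identity $B_k s_j = G_{k-1} s_j$ prevents the accumulation of $\opn{Tr}(B_k)$-sized factors that would otherwise defeat the exponential contradiction.
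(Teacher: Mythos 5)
Your proposal is correct and follows essentially the same route as the paper: the same trace-product bound $\prod_k \lambda_k\|g_k\|^2/\la -g_k,s_k\ra \le c_3^k$, the same key observation that the carried-over columns of $D_k$ satisfy $B_k s_j = G_{k-1}s_j$ by the previous sketching equation (so $\la s_j,B_ks_j\ra \le M\|s_j\|^2$), and the same Hadamard-plus-filtering determinant estimate yielding $\opn{det}(B_{k+1})/\opn{det}(B_k) \ge \tau^{q}c_1/(\lambda_k M^{q-1})$, after which the contradiction proceeds as in Theorem~\ref{CONVERGENT}. The only cosmetic difference is that you carry the polynomial factor $N^n$ from Lemma~\ref{DETBD} explicitly rather than absorbing it into the exponential constant, which does not affect the argument.
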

\begin{proof}
	By the calculations for Corollary~\ref{TRPROD}, we have $\prod_{j=1}^{k} \frac{\lambda_j \|g_j\|^2}{\la -g_j, s_j\ra} \leq c_3^k$.
	
	$D_k$ is produced by adding column $s_k$ to $D_{k-1}$, removing $s_{k-q}$ if present, and then running Algorithm~\ref{alg:FILTER}. Without loss of generality, assume that $D_k = \lb s_k \ldots s_{k - q_k+1} \rb$. By definition, $B_k$ satisfies $B_kD_{k-1} = G_{k-1}D_{k-1}$. Thus, we have
	$$\opn{det}(D_k^TB_kD_k) \leq \prod_{i=0}^{q_k - 1} \la s_{k-i}, B_ks_{k-i} \ra = \la s_k, B_k s_k \ra \prod_{i=1}^{q_k-1} \la s_{k-i}, G_{k-1}s_{k-i}\ra$$
	which gives an analogue of Lemma~\ref{DET}:
	\begin{align*}
		\opn{det}(B_{k+1}) \geq \frac{ \prod_{i=0}^{q_k - 1} \tau \|s_{k - i}\|^2}{ \la s_k, -\lambda_k g_k \ra \prod_{i=1}^{q_k-1} \la s_{k-i}, G_{k-1}s_{k-i} \ra }  \opn{det}(B_k) \geq \frac{1}{\lambda_k}\frac{c_1 \tau^q}{M^{q-1}} \opn{det}(B_k).
	\end{align*}
	Thus $\opn{det}(B_{k+1}) \geq \left( \frac{c_1\tau^q}{M^{q-1}} \right)^k \opn{det}(B_1) \prod_{j=1}^{k} \frac{1}{\lambda_k}$. The remainder of the proof follows exactly as in the proofs of  Corollary~\ref{LASTBOUND} and Theorem~\ref{CONVERGENT}.
\end{proof}

\section{Superlinear Convergence of Block BFGS}\label{sec:SUPERLINEAR}
In this section we show that Block BFGS converges $Q$-superlinearly under the same conditions as does BFGS, namely, that $f$ is strongly convex in a neighborhood of its minimizer, and its Hessian is Lipschitz continuous. We use the characterization of superlinear convergence given by Dennis and Mor\'{e} \cite{DM1}, and employ an argument similar to the analysis used by Griewank and Toint \cite{GT} for partitioned quasi-Newton updates. The following conditions, which strengthen Assumption 1, will apply to $f$ throughout this section.
\paragraph{\textbf{Assumption 2}}
\begin{enumerate}
	\item $f$ is convex and twice differentiable, with $G(x) \preceq MI$ on the level set $\{x \in \RR^n: f(x) \leq f(x_1)\}$.
	\item $f$ has a minimizer $x_\ast$ for which $G(x_\ast)$ is non-singular. Note that this implies $x_\ast$ is unique.
	\item $G(x)$ is Lipschitz in a neighborhood of $x_\ast$, with Lipschitz constant $\mu$.
\end{enumerate}

Since Assumption 2 is stronger than Assumption 1, \Cref{CONVERGENT} implies that the iterates produced by Block BFGS converge to the unique stationary point $x_\ast$. The continuity of $G(x)$ and the fact that $G(x_\ast)$ is non-singular imply that $f$ is strongly convex in a neighborhood $S$ of $x_\ast$. Superlinear convergence is an asymptotic property, so we may restrict our attention to the tail of the sequence $\{x_k\}$, contained in $S$. That is, we may assume without loss of generality that all iterates $\{x_k\}$ lie in a region $S$ on which $f$ is strongly convex, with
$$mI \preceq G(x) \preceq MI \hspace{2em} \forall x \in S$$
for constants $0 < m \leq M$.

In this section, we assume $\tau \leq m$, where $\tau$ is the parameter in \textsc{filtersteps}. Since $\sigma_1^2 = \lb S_k^TG_k S_k \rb_{11} = \la \ux{s}{k}{1}, G_k\ux{s}{k}{1} \ra \geq m \|\ux{s}{k}{1}\|^2$, the first column of $D_k$ is never removed by \textsc{filtersteps}. This guarantees that an update is always performed. The choice of $\tau$ is important and can impact superlinear convergence; we give a detailed discussion in the remarks concluding this section.

\begin{THM}\label{SUPERLINEAR}
	Let $f$ be a function satisfying Assumption 2. Block BFGS converges $Q$-superlinearly along the subsequence of steps in $D_k$; that is,
	$$\lim_{\substack{k \rightarrow \infty \\ i \in D_k}} \frac{ \|\ux{x}{k}{i+1} - x_\ast\|}{\|\ux{x}{k}{i} - x_\ast\|} = 0.$$
\end{THM}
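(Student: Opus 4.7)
The plan is to prove the Dennis--Mor\'e condition along the subsequence of steps retained in $D_k$ and combine it with the bias of the line search toward the unit step size to obtain $Q$-superlinear convergence. By \Cref{CONVERGENT} together with the remarks preceding the theorem, we may assume that all iterates lie in a region $S$ where $mI \preceq G(x) \preceq MI$, and the assumption $\tau \leq m$ ensures that at least the first column of $D_k$ always survives filtering, so an update is made at every block.

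The key ingredient is a bounded deterioration inequality for the trace--log-determinant merit function
$$\psi(B) = \opn{Tr}(G_\ast^{-1} B) - \ln \det(G_\ast^{-1} B) - n \geq 0,$$
which vanishes only at $B = G_\ast$. Using the update formula (\ref{eq:BFGS_UPD}) together with \Cref{DETUPDATE}, and writing $G_k = G_\ast + E_k$ with $\|E_k\| \leq \mu \|\ux{x}{k}{q+1} - x_\ast\|$ from the Lipschitz assumption, I would derive an inequality of the form
$$\psi(B_{k+1}) \leq \psi(B_k) + C\|E_k\| - \Delta_k,$$
where $\Delta_k \geq 0$ decomposes, after a change of basis within $\opn{Col}(D_k)$ aligned with the $L\Sigma L^T$ factorization computed by \textsc{filtersteps}, as a sum of nonnegative per-column contributions, each of which bounds a Cauchy--Schwarz gap controlling $\|(B_k - G_\ast)\ux{s}{k}{i}\|^2 / \|\ux{s}{k}{i}\|^2$.

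Next I would bootstrap this into the summability $\sum_k \Delta_k < \infty$ required by Dennis--Mor\'e. Following the argument of Byrd and Nocedal \cite{BNY}, bounded deterioration forces the condition number of $B_k$ to remain bounded on a positive fraction of blocks, which together with strong convexity of $f$ and the Armijo--Wolfe step yields $R$-linear convergence $\|\ux{x}{k}{i} - x_\ast\| = O(\rho^{k/2})$, and hence $\sum_k \|E_k\| < \infty$. Summing the bounded deterioration in $k$ and using $\psi \geq 0$ then gives $\sum_k \Delta_k < \infty$, so every summand tends to zero and $\|(B_k - G_\ast)\ux{s}{k}{i}\| / \|\ux{s}{k}{i}\| \to 0$ along $i \in D_k$.

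Finally, the Dennis--Mor\'e condition, combined with $\alpha < \tfrac{1}{2}$ in the Armijo condition and the fact that our line search accepts $\lambda = 1$ whenever admissible, implies that unit steps are eventually taken on the retained subsequence; the classical argument of Dennis and Mor\'e \cite{DM1} then yields the claimed limit. The hardest part will be the bounded deterioration inequality itself: the block update is not a rank-two update, and obtaining a per-column decomposition of $\Delta_k$ that cleanly controls the Dennis--Mor\'e ratio of each individual $\ux{s}{k}{i}$ requires the basis choice noted above and careful tracking of the perturbation $E_k = G_k - G_\ast$ through both the trace change and the log-determinant change furnished by \Cref{DETUPDATE}.
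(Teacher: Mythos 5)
Your proposal is correct in outline but takes a genuinely different route from the paper's. The paper normalizes $G(x_\ast) = I$ and runs a Frobenius-norm potential argument in the style of Griewank and Toint: \Cref{B_IMPR} gives the exact decrease $\|B_k - I\|_F^2 - \|\wt{B}_{k+1} - I\|_F^2 = \varphi_k + 2\psi_k \geq 0$ for the idealized update $\wt{B}_{k+1}$ built with $G(x_\ast)$ in place of $G_k$, \Cref{GROWTH} bounds the perturbation $\|B_{k+1} - \wt{B}_{k+1}\|_F \leq \kappa_1 r_k$, and telescoping against $\sum_k r_k < \infty$ (\Cref{RLINEAR}) forces $\varphi_k, \psi_k \to 0$; a $B_k$-conjugate basis of $\opn{Col}(D_k)$ containing the step of interest then yields the Dennis--Mor\'e ratio for every retained step (\Cref{LIMITS}, \Cref{DM_SUP}), and \Cref{STEPSIZE_1} supplies the eventual admissibility of $\lambda = 1$ exactly as in your last paragraph. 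Your Byrd--Nocedal potential $\opn{Tr}(G_\ast^{-1}B) - \ln\opn{det}(G_\ast^{-1}B) - n$ also works, and is in fact well adapted to the block update, since \Cref{DETUPDATE} gives the determinant change in closed form and the trace change is explicit from (\ref{eq:BFGS_UPD}); in the normalized coordinates the decrease for the idealized update equals $\sum_i (b_i - 1 - \ln a_i)$ with $a_i = v_i^TB_kv_i$ and $b_i = \|B_kv_i\|^2/(v_i^TB_kv_i)$, each summand nonnegative because $b_i \geq a_i$ (Cauchy--Schwarz) and $x - 1 - \ln x \geq 0$, and each summand tending to zero forces $a_i, b_i \to 1$ and hence the Dennis--Mor\'e ratio to vanish on $\opn{Col}(D_k)$. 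Two corrections to your plan, though. First, the basis that makes this decomposition exact is \emph{not} the one aligned with the $L\Sigma L^T$ factorization of $D_k^TG_kD_k$ computed by \textsc{filtersteps}; you need an orthonormal basis $\{v_i\}$ of $\opn{Col}(D_k)$ that diagonalizes $D_k^TB_kD_k$ (so that both $\opn{det}(D_k^TD_k)$ and $\opn{det}(D_k^TB_kD_k)$ factor and the trace term splits with no cross terms), after which the ratio $\|(B_k - G_\ast)s\|/\|s\|$ for an individual step $s = \ux{s}{k}{i}$ follows by expanding $s$ in that basis at the cost of a factor $\sqrt{q}$. Second, the $R$-linear convergence you invoke to make the perturbation summable should not be re-derived from bounded deterioration; it is already \Cref{RLINEAR}, proved from the Section 4 trace/determinant estimates, and citing it directly avoids any appearance of circularity. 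As for what each approach buys: your potential controls the spectrum of $B_k$ directly (boundedness of the potential bounds all eigenvalues of $B_k$ away from $0$ and $\infty$), whereas the paper must obtain boundedness of $\|B_k\|_F$ and $\|H_k\|_F$ separately via \Cref{DM_C} and \Cref{CONDITIONBD}; the paper's potential avoids perturbing a log-determinant and yields the clean quantitative bound $\|(B_k - I)w\|/\|w\| \leq \sqrt{2\varphi_k + \psi_k}$.
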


To clarify the statement of this theorem, the quotients $\| \ux{x}{k}{i+1} - x_\ast\| / \| \ux{x}{k}{i} - x_\ast\|$ in the subsequence are those for which $\ux{s}{k}{i}$ is in $D_k$. If every step is included in $D_k$, then we have $Q$-superlinear convergence for the sequence of points $\{\ux{x}{k}{i}\}$ in the usual sense. To give an example of the contrary, suppose the step $\ux{s}{10}{2}$ is removed by filtering; then the quotient $\|\ux{x}{10}{3} - x_\ast\|/\|\ux{x}{10}{2} - x_\ast\|$ is not captured in the subsequence. In theory, one step is guaranteed per block $D_k$, but we note that in practice, $D_k$ contains all or nearly all steps for every $k$.

We begin by showing that Block BFGS converges $R$-linearly. The first three lemmas are well known; see \cite{BNY,POW}. These three lemmas apply to every step, and thus we write $x_{k+1}$ for the iterate immediately following $x_k$, instead of using superscripts.

\begin{LMA}\label{KEY}
	For $c_1 = \frac{1-\beta}{M}$ and $c_2 = \frac{2(1-\alpha)}{m}$, 
	$$c_1 \|g_k\| \cos \theta_k \leq \|s_k\| \leq c_2 \|g_k\| \cos\theta_k$$
\end{LMA}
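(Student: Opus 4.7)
The lower bound $c_1 \|g_k\|\cos\theta_k \leq \|s_k\|$ is literally the content of \Cref{LIPSCH_KEY}, which was proved in \Cref{sec:CONVERGENT} using only the Wolfe condition and the Lipschitz bound $\|y_k\| \leq M\|s_k\|$ guaranteed by part (2) of Assumption 1 (which remains in force under Assumption 2). So nothing new is needed for the left-hand inequality; I would just cite it.

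For the upper bound $\|s_k\| \leq c_2 \|g_k\|\cos\theta_k$, the plan is to combine the Armijo condition (\ref{armijo}) with the strong convexity of $f$ on $S$. Strong convexity with parameter $m$ (which holds throughout $S$ by the standing assumption $mI \preceq G(x) \preceq MI$) gives the quadratic lower bound
\begin{equation*}
f(x_{k+1}) \geq f(x_k) + \la g_k, s_k \ra + \tfrac{m}{2}\|s_k\|^2,
\end{equation*}
while the Armijo condition, rewritten in terms of $s_k = \lambda_k d_k$, gives the upper bound $f(x_{k+1}) \leq f(x_k) + \alpha \la g_k, s_k\ra$. Subtracting $f(x_k)$ from both and chaining the two inequalities produces
\begin{equation*}
\tfrac{m}{2}\|s_k\|^2 \leq (1-\alpha)\la -g_k, s_k \ra = (1-\alpha)\|g_k\|\|s_k\|\cos\theta_k,
\end{equation*}
so dividing by $\|s_k\|$ yields exactly $\|s_k\| \leq c_2 \|g_k\|\cos\theta_k$ with $c_2 = 2(1-\alpha)/m$.

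There is no real obstacle; the only things to be careful about are the sign conventions (since $s_k$ is a descent direction we have $\la g_k, s_k\ra < 0$, so the rearrangement produces $1-\alpha > 0$ in front of $\la -g_k, s_k\ra$, and the condition $\alpha < 1/2 < 1$ keeps this constant positive) and the implicit use of the fact that the iterates $x_k, x_{k+1}$ both lie in the region $S$ on which strong convexity holds, which is the standing reduction made at the start of \Cref{sec:SUPERLINEAR}. The lemma's combined bound then shows that $\|s_k\|$ and $\|g_k\|\cos\theta_k$ are comparable up to constants depending only on $\alpha,\beta,m,M$, which is what the later superlinear-convergence analysis will exploit.
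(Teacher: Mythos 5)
Your proposal is correct and matches the paper's argument essentially verbatim: the lower bound is cited from Lemma~\ref{LIPSCH_KEY}, and the upper bound combines the Armijo condition with the second-order Taylor expansion $f(x_{k+1}) = f(x_k) + \la g_k, s_k\ra + \tfrac12 s_k^T G(\wt{x}) s_k$ and $G(\wt{x}) \succeq mI$, which is exactly your strong-convexity quadratic lower bound. The sign and domain caveats you note are the same ones the paper handles implicitly.
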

\begin{proof}
	By Taylor's theorem, there exists a point $\wt{x}$ on the line segment joining $x_k, x_{k+1}$ such that $f(x_{k+1}) = f(x_k) + \la g_k, s_k \ra + \frac{1}{2}s_k^TG(\wt{x})s_k$. From (\ref{armijo}), $f(x_{k+1}) - f(x_k) \leq \alpha \la g_k, s_k \ra$, so $(1-\alpha) \la -g_k, s_k\ra \geq \frac{1}{2} s_k^TG(\wt{x})s_k \geq \frac{1}{2}m\|s_k\|^2$. Rearranging yields $\|s_k\| \leq c_2 \|g_k\| \cos \theta_k$. The lower bound was shown in Lemma~\ref{LIPSCH_KEY}. 
\end{proof}

\begin{LMA}\label{gStrongConvex}
	For any $x \in S$, $\|g(x)\|^2 \geq 2m(f(x) - f_\ast)$.
\end{LMA}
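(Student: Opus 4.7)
The plan is to derive the inequality as a consequence of strong convexity of $f$ on $S$. By the assumption $mI \preceq G(x)$ for all $x \in S$, the standard integral form of Taylor's theorem yields that for any $x, y \in S$,
\begin{equation*}
f(y) \geq f(x) + \la g(x), y - x \ra + \frac{m}{2}\|y - x\|^2.
\end{equation*}
I would then apply this with $y = x_\ast$, which lies in $S$ and satisfies $g(x_\ast) = 0$. This gives
\begin{equation*}
f_\ast \geq f(x) + \la g(x), x_\ast - x \ra + \frac{m}{2}\|x_\ast - x\|^2,
\end{equation*}
and rearranging together with Cauchy--Schwarz produces
\begin{equation*}
f(x) - f_\ast \leq \la g(x), x - x_\ast \ra - \frac{m}{2}\|x - x_\ast\|^2 \leq \|g(x)\|\,\|x - x_\ast\| - \frac{m}{2}\|x - x_\ast\|^2.
\end{equation*}

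Finally, viewing the right-hand side as a concave quadratic in the scalar $t = \|x - x_\ast\| \geq 0$, it is maximized at $t = \|g(x)\|/m$ with maximum value $\|g(x)\|^2/(2m)$. Hence
\begin{equation*}
f(x) - f_\ast \leq \frac{\|g(x)\|^2}{2m},
\end{equation*}
which rearranges to the claimed bound $\|g(x)\|^2 \geq 2m(f(x) - f_\ast)$.

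There is no real obstacle here: the only subtlety is ensuring $x_\ast \in S$ so that the strong convexity inequality is applicable at $y = x_\ast$, but this is guaranteed by the choice of $S$ at the outset of the section. Thus the proof is a short, direct computation, amounting to the Polyak--\L{}ojasiewicz inequality specialized to strongly convex functions.
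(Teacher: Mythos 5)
Your proof is correct and follows essentially the same route as the paper: Taylor expansion about $x$ evaluated at $x_\ast$ with the lower bound $G \succeq mI$, followed by Cauchy--Schwarz. The only cosmetic difference is the final step, where you maximize the concave quadratic in $\|x - x_\ast\|$ while the paper applies AM--GM to the same inequality; these are equivalent.
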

\begin{proof}
	The result is immediate if $x = x_\ast$, so assume $x \neq x_\ast$. By Taylor's theorem, there exists a point $\wt{x}$ on the line segment joining $x, x_\ast$ such that $f(x_\ast) = f(x) + g(x)^T(x_\ast - x) + \frac{1}{2}(x_\ast - x)^TG(\wt{x})(x_\ast - x)$, in which case
	$$g(x)^T(x - x_\ast) = f(x) - f_\ast + \half (x_\ast - x)^TG(\wt{x})(x_\ast - x) \geq f(x) - f_\ast + \half m \|x - x_\ast\|^2.$$
	Using the Cauchy-Schwarz inequality, we find that $\|g(x)\|\|x - x_\ast\| \geq f(x) - f_\ast + \half m \|x - x_\ast\|^2$. Applying the AM-GM inequality and squaring yields $\|g(x)\|^2 \geq 2m(f(x) - f_\ast)$.
\end{proof}

\begin{LMA}\label{REDUCTION}
	$$f_{k+1} - f_\ast \leq (1 - 2\alpha m c_1 \cos^2 \theta_k ) (f_k - f_\ast)$$
\end{LMA}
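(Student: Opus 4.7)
The plan is to combine the Armijo condition, the lower bound on $\|s_k\|$ from Lemma~\ref{LIPSCH_KEY} (equivalently the left-hand inequality of Lemma~\ref{KEY}), and the strong-convexity-type bound from Lemma~\ref{gStrongConvex}, chained in a straightforward way. There is no real obstacle here; this is a standard ``one-step descent implies linear convergence'' computation adapted to the Armijo--Wolfe regime.

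First, I would start from the Armijo condition \eqref{armijo}, rewritten in terms of the step $s_k = \lambda_k d_k$, to obtain
\begin{equation*}
  f_{k+1} - f_k \;\leq\; \alpha \langle g_k, s_k \rangle \;=\; -\alpha \,\|g_k\|\,\|s_k\|\,\cos\theta_k .
\end{equation*}
Next I would apply Lemma~\ref{LIPSCH_KEY}, which yields $\|s_k\| \geq c_1 \|g_k\|\cos\theta_k$, to convert the right-hand side into a bound involving only $\|g_k\|^2$ and $\cos^2\theta_k$:
\begin{equation*}
  f_{k+1} - f_k \;\leq\; -\alpha c_1 \|g_k\|^2 \cos^2\theta_k .
\end{equation*}

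Finally, I would use Lemma~\ref{gStrongConvex} to replace $\|g_k\|^2$ by the lower bound $2m(f_k - f_\ast)$, giving
\begin{equation*}
  f_{k+1} - f_k \;\leq\; -2\alpha m c_1 \cos^2\theta_k \,(f_k - f_\ast) .
\end{equation*}
Adding $f_k - f_\ast$ to both sides yields the desired inequality
\begin{equation*}
  f_{k+1} - f_\ast \;\leq\; (1 - 2\alpha m c_1 \cos^2\theta_k)(f_k - f_\ast) .
\end{equation*}
The only thing worth checking is that every iterate lies in the strongly convex region $S$ so that Lemma~\ref{gStrongConvex} applies, but by the reduction made earlier in the section we have assumed this without loss of generality.
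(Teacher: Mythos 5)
Your proof is correct and is essentially identical to the paper's: both chain the Armijo condition, the lower bound $\|s_k\| \geq c_1\|g_k\|\cos\theta_k$ from Lemma~\ref{LIPSCH_KEY}, and the bound $\|g_k\|^2 \geq 2m(f_k - f_\ast)$ from Lemma~\ref{gStrongConvex}. Your closing remark about the iterates lying in $S$ is also consistent with the reduction the paper makes at the start of the section.
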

\begin{proof}
	The Armijo condition~(\ref{armijo}) and Lemma~\ref{KEY} imply that
	$$f_{k+1} - f_k \leq \alpha \la g_k, s_k \ra = -\alpha \|g_k\|\|s_k\|\cos\theta_k \leq -\alpha c_1\|g_k\|^2 \cos^2\theta_k.$$
	By Lemma~\ref{gStrongConvex}, $\|g_k\|^2 \geq 2 m (f_k  - f_\ast)$. Hence $f_{k+1} - f_\ast \leq \left(1 - 2\alpha m c_1 \cos^2 \theta_k \right)(f_k - f_\ast)$.
\end{proof}

Define $r_k = \|\ux{x}{k}{q+1} - x_\ast\|$. $R$-linear convergence implies that the errors $r_k$ diminish to zero rapidly enough that $\sum_{k=1}^\infty r_k < \infty$, a key property.

\begin{THM}\label{RLINEAR}
	There exists $\delta < 1$ such that $f(\ux{x}{k}{q+1}) - f_\ast \leq \delta^k (f(\ux{x}{1}{1}) - f_\ast)$, and thus $\sum_{k=1}^\infty r_k < \infty$.
\end{THM}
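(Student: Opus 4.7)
The plan is to iterate Lemma~\ref{REDUCTION} along every step and then convert the multiplicative estimate of Corollary~\ref{LASTBOUND} into the additive information needed to make the cumulative reduction geometric in $k$. Setting $\gamma := 2\alpha m c_1 = 2\alpha m(1-\beta)/M$, we have $\gamma \leq 1$, so each factor $1 - \gamma\cos^2\ux{\theta}{j}{i}$ lies in $[0,1]$. Telescoping the per-step contraction of Lemma~\ref{REDUCTION} over blocks $j=1,\ldots,k$ and inner indices $i=1,\ldots,q$ gives
\begin{equation*}
f(\ux{x}{k}{q+1}) - f_\ast \;\leq\; \Bigl(\prod_{j=1}^{k}\prod_{i=1}^{q}(1-\gamma\cos^2\ux{\theta}{j}{i})\Bigr)(f(\ux{x}{1}{1}) - f_\ast),
\end{equation*}
so it suffices to exhibit $\delta < 1$ with this double product bounded by $\delta^k$.

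To lower-bound the angles along the filtered steps, I would use the upper half of Lemma~\ref{KEY}: $\la -\ux{g}{j}{i},\ux{s}{j}{i}\ra \leq c_2\|\ux{g}{j}{i}\|^2\cos^2\ux{\theta}{j}{i}$, whence $1/(c_2\cos^2\ux{\theta}{j}{i}) \leq \|\ux{g}{j}{i}\|^2/\la -\ux{g}{j}{i},\ux{s}{j}{i}\ra$. Multiplying this over the steps in each $D_j$ (using the same WLOG indexing $D_j = \lb \ux{s}{j}{1}\cdots\ux{s}{j}{q_j}\rb$ as Corollary~\ref{LASTBOUND}) yields
\begin{equation*}
\prod_{j=1}^{k}\prod_{i=1}^{q_j}\cos^2\ux{\theta}{j}{i} \;\geq\; \rho^{-k},
\end{equation*}
where $\rho := \max(c_2,1)^q c_4 \geq 1$. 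The assumption $\tau \leq m$ forces $\sigma_1^2 \geq m\|\ux{s}{j}{1}\|^2 \geq \tau\|\ux{s}{j}{1}\|^2$, so $q_j \geq 1$ for every $j$ and $\wh{q}_k := \sum_{j=1}^{k}q_j$ satisfies $k \leq \wh{q}_k \leq qk$.

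The main obstacle is bridging this product bound and the sum bound demanded by the elementary inequality $1-x \leq e^{-x}$. I would use AM--GM:
\begin{equation*}
\frac{1}{\wh{q}_k}\sum_{j=1}^{k}\sum_{i=1}^{q_j}\cos^2\ux{\theta}{j}{i} \;\geq\; \Bigl(\prod_{j=1}^{k}\prod_{i=1}^{q_j}\cos^2\ux{\theta}{j}{i}\Bigr)^{1/\wh{q}_k} \;\geq\; \rho^{-k/\wh{q}_k} \;\geq\; \rho^{-1},
\end{equation*}
using $k/\wh{q}_k \leq 1$ and $\rho \geq 1$. Dropping the factors with $i > q_j$ from the full product only increases it (each factor being in $[0,1]$), so
\begin{equation*}
\prod_{j=1}^{k}\prod_{i=1}^{q}(1-\gamma\cos^2\ux{\theta}{j}{i}) \;\leq\; \prod_{j=1}^{k}\prod_{i=1}^{q_j}(1-\gamma\cos^2\ux{\theta}{j}{i}) \;\leq\; \exp(-\gamma\wh{q}_k/\rho) \;\leq\; e^{-\gamma k/\rho} \;=:\; \delta^k,
\end{equation*}
with $\delta := e^{-\gamma/\rho} < 1$ as required.

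Strong convexity on $S$ then gives $\tfrac{m}{2}r_k^2 \leq f(\ux{x}{k}{q+1}) - f_\ast \leq \delta^k(f(\ux{x}{1}{1}) - f_\ast)$, so $r_k \leq C\delta^{k/2}$ for $C := \sqrt{(2/m)(f(\ux{x}{1}{1}) - f_\ast)}$, and $\sum_k r_k < \infty$ as a geometric series. The crux of the argument is really Step~3: Corollary~\ref{LASTBOUND} delivers control over a \emph{product}, while the exponential trick needs a \emph{sum}, and AM--GM together with the guarantee $q_j \geq 1$ (ensured by $\tau \leq m$) is exactly what converts one into the other.
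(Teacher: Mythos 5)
Your argument is correct, and it follows the paper's skeleton — telescoping Lemma~\ref{REDUCTION} over all steps, then combining Corollary~\ref{LASTBOUND} with the upper bound of Lemma~\ref{KEY} to get $\prod_{j}\prod_{i\in D_j}\cos^2\ux{\theta}{j}{i} \geq \rho^{-k}$ — but it diverges at the final step. The paper uses a pigeonhole/counting argument: since each factor is at most $1$ and the product is at least $(c_2^qc_4)^{-k}$, at least $\tfrac{1}{2}k$ of the angles must satisfy $\cos^2\ux{\theta}{j}{i} \geq (c_2^qc_4)^{-2}$, and Lemma~\ref{REDUCTION} is then applied only to those "good" steps to get $\delta = \bigl(1 - 2\alpha m c_1/(c_2^{2q}c_4^2)\bigr)^{1/2}$. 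You instead convert the product bound into a sum bound via AM--GM ($\tfrac{1}{\wh{q}_k}\sum\cos^2\ux{\theta}{j}{i} \geq \rho^{-k/\wh{q}_k} \geq \rho^{-1}$, using $q_j \geq 1$ from $\tau \leq m$) and then apply $1-x \leq e^{-x}$ to every factor at once, obtaining $\delta = e^{-\gamma/\rho}$. Both yield a valid geometric rate; your route avoids the counting step and is slightly cleaner to verify, and your use of $\max(c_2,1)^q$ handles the case $c_2 < 1$ that the paper's bound $c_2^{\wh{q}_k} \leq c_2^{qk}$ implicitly glosses over. The one small thing to tighten is the assertion $\rho \geq 1$: it is not immediate from the definition $\rho = \max(c_2,1)^qc_4$ (since $c_4 \geq 1$ is not obvious), but it does follow because $\rho^{-k} \leq \prod\cos^2\ux{\theta}{j}{i} \leq 1$ for all $k$; alternatively just replace $\rho$ by $\max(\rho,1)$.
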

\begin{proof}
	From Lemma~\ref{LASTBOUND}, $\prod_{j=1}^{k} \prod_{i=1}^{q_j} \frac{\|\ux{g}{j}{i}\|}{\|\ux{s}{j}{i}\| \cos \ux{\theta}{j}{i} } \leq c_4^k$. Lemma~\ref{KEY} gives the upper bound $\|\ux{s}{j}{i}\| \leq c_2 \| \ux{g}{j}{i}\| \cos \ux{\theta}{j}{i}$. Substituting, we find
	$$\prod_{j=1}^{k} \prod_{i=1}^{q_j} \cos^2 \ux{\theta}{j}{i} \geq \left( \frac{1}{c_2^qc_4} \right)^k.$$
	From this, we see that at least $\frac{1}{2}k$ of the angles must satisfy $\cos^2 \ux{\theta}{j}{i} \geq \left( \frac{1}{c_2^qc_4} \right)^2$.
	
	By Lemma~\ref{REDUCTION}, $f(\ux{x}{k}{i+1}) - f_\ast \leq (1 - 2\alpha m c_1 \cos^2 \theta_k ) (f(\ux{x}{k}{i}) - f_\ast)$. Using our bound on the angles,
	$$f(\ux{x}{k}{q+1}) - f_\ast \leq \left(1 - 2\alpha m c_1 \left( \frac{1}{c_2^qc_4} \right)^2 \right)^{\frac{1}{2}k} (f(\ux{x}{1}{1}) - f_\ast).$$
	Hence, we may take $\delta = \left(1 - \frac{2\alpha m c_1}{c_2^{2q}c_4^2} \right)^{1/2}$. The strong convexity of $f$ implies that $\frac{1}{2}m\|x - x_\ast\|^2 \leq f(x) - f_\ast \leq \frac{1}{2}M\|x - x_\ast\|^2$, so we have $r_k \leq (\sqrt{\delta})^k \sqrt{\frac{M}{m}} \|\ux{x}{1}{1} - x_\ast\|$. Therefore $\sum_{k=1}^\infty r_k < \infty$.
\end{proof}

The classical BFGS method is invariant under a linear change of coordinates. It is easy to verify that Block BFGS also has this invariance, so we may assume without loss of generality that $G(x_\ast) = I$. This greatly simplifies the following calculations. Given that Theorem~\ref{CONVERGENT} implies that Block BFGS converges, we will also assume that the iterates lie in the region around $x_\ast$ where $G(x)$ is Lipschitz continuous.
\begin{LMA}\label{LIPSCHITZ}
	For any $v \in \RR^n$, $\|(G_k - I)v\| \leq \mu r_k\|v\|$.
\end{LMA}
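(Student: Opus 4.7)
The plan is to use the Lipschitz continuity of the Hessian on the region containing the iterates. Since Assumption 2(3) gives that $G(x)$ is $\mu$-Lipschitz in a neighborhood of $x_\ast$, and we have already argued that the tail of the iterates lies in this neighborhood, we may apply the Lipschitz bound directly at the two points $\ux{x}{k}{q+1}$ and $x_\ast$.

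Concretely, the normalization $G(x_\ast) = I$ (justified by the invariance of Block BFGS under a linear change of coordinates) converts the claim into a statement about how close $G_k$ is to $G(x_\ast)$ in operator norm. First I would write
\[
\|G_k - I\| = \|G(\ux{x}{k}{q+1}) - G(x_\ast)\| \leq \mu \|\ux{x}{k}{q+1} - x_\ast\| = \mu r_k,
\]
where the inequality is just the definition of Lipschitz continuity of $G(\cdot)$ in the operator norm. Then for any $v \in \RR^n$, submultiplicativity of the operator norm gives
\[
\|(G_k - I)v\| \leq \|G_k - I\|\,\|v\| \leq \mu r_k \|v\|,
\]
which is the desired bound.

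There is essentially no obstacle here; the only subtle point worth noting is that the Lipschitz hypothesis in Assumption 2 is stated for $G(x)$ (matrix-valued), which by standard identification means $\|G(x) - G(y)\| \leq \mu \|x - y\|$ in the operator norm, so the bound is immediate. The main use of this lemma will come later in the superlinear convergence analysis, where it controls the deviation of the Hessian from the identity along the converging iterates.
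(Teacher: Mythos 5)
Your proposal is correct and follows essentially the same argument as the paper: normalize so that $G(x_\ast) = I$, apply the Lipschitz continuity of $G(\cdot)$ to bound $\|G_k - I\| \leq \mu r_k$, and conclude by submultiplicativity of the operator norm. No differences worth noting.
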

\begin{proof}
	Since $G(x_\ast) = I$,
	$$\|(G_k - I)v\| \leq \| G(\ux{x}{k}{q+1}) - G(x_\ast)\| \|v\| \leq \mu \|\ux{x}{k}{q+1} - x_\ast\| \|v\| = \mu r_k \|v\|.$$
\end{proof}

The following notion is useful in our analysis. Define $\wt{B}_{k+1}$ to be the matrix obtained by performing a Block BFGS update on $B_k$ with $G_k = G(x_\ast)$. Since we assumed $G(x_\ast) = I$, we have the explicit formula
$$\wt{B}_{k+1} = B_k - B_kD_k(D_k^TB_kD_k)^{-1}D_k^TB_k + D_k(D_k^TD_k)^{-1}D_k^T$$
and its inverse $\wt{H}_{k+1}$ is given by
$$\wt{H}_{k+1} = D_k(D_k^TD_k)^{-1}D_k^T + (I - D_k(D_k^TD_k)^{-1}D_k^T)H_k(I - D_k(D_k^TD_k)^{-1}D_k^T).$$

\begin{LMA}\label{B_IMPR}
	Let $B = B_k, \wt{B} = \wt{B}_{k+1}, D = D_k$. Define the following orthogonal projections:
	\begin{enumerate}
		\item $P = B^\frac{1}{2}D( D^TBD)^{-1}D^TB^\frac{1}{2}$, the projection onto $\opn{Col}(B^\half D)$.
		\item $P_D = D(D^TD)^{-1}D^T$, the projection onto $\opn{Col}(D)$.
		\item $P_B = BD(D^TB^2D)^{-1}D^TB$, the projection onto $\opn{Col}(BD)$.
	\end{enumerate}
	Then
	$$\|B - I\|_F^2 - \|\wt{B} - I\|_F^2 = \|P_B - B^\half P B^\half\|_F^2 + 2 \opn{Tr}( B(B^\half P B^\half) - (B^\half P B^\half)^2)$$
	Furthermore, $\opn{Tr}( B(B^\half P B^\half) - (B^\half P B^\half)^2) \geq 0$, and thus $\|\wt{B} - I\|_F \leq \|B - I\|_F$.
\end{LMA}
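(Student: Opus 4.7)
The plan is to expand the left-hand side directly, use two small identities to reorganize it into the claimed sum, and then establish the inequality by recognizing the trace term as a product of two positive semidefinite matrices.

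First, I would rewrite the update compactly. Noting that $BD(D^TBD)^{-1}D^TB = B^{\half}PB^{\half}$, the definition of $\wt B$ gives $\wt B - I = (B - I) - (M - P_D)$, where I abbreviate $M := B^\half P B^\half$. Expanding squared Frobenius norms then yields
\[
\|B - I\|_F^2 - \|\wt B - I\|_F^2 = 2\la B - I,\ M - P_D \ra - \|M - P_D\|_F^2.
\]

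Second, the bookkeeping. The cross terms simplify via two identities obtained by direct multiplication and the cancellation $(D^TBD)(D^TBD)^{-1} = I$: namely $P_B M = M$, so $\opn{Tr}(M) = \la P_B, M \ra$, and $M P_D = B P_D$, so $\la M, P_D \ra = \la B, P_D \ra$. Combined with $\|P_D\|_F^2 = q_k = \|P_B\|_F^2$ (both $P_D$ and $P_B$ are rank-$q_k$ orthogonal projections), expanding the right-hand side of the claimed identity gives the same expression, so the claimed equation follows.

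Third, to prove $\opn{Tr}(BM - M^2) = \opn{Tr}(M(B - M)) \geq 0$, I would observe that $M = B^\half P B^\half \succeq 0$ since $P$ is a projection, and likewise $B - M = B^\half(I - P)B^\half \succeq 0$. The trace of a product of two PSD matrices is nonnegative (via $\opn{Tr}(XY) = \opn{Tr}(X^\half Y X^\half) \geq 0$). Combined with $\|P_B - M\|_F^2 \geq 0$, this yields the monotonicity $\|\wt B - I\|_F \leq \|B - I\|_F$.

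The main obstacle is spotting the two identities in the bookkeeping step; without $P_B M = M$ and $M P_D = B P_D$, the expansion produces terms like $\opn{Tr}(M)$, $\la B, P_D \ra$, and $\la P_B, M \ra$ that look unrelated, and the identity is hard to see. Once these identities are recognized, everything else is a short calculation, and the nonnegativity step reduces to the standard PSD-trace fact.
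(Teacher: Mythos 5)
Your proposal is correct and follows essentially the same route as the paper's proof: both expand $\|\wt B - I\|_F^2$ via $\wt B - I = (B-I) - (B^\half P B^\half - P_D)$, reduce the cross terms using $\opn{Tr}(BP_D) = \opn{Tr}(B^\half P B^\half P_D)$ and the equality of the ranks of $P_B$ and $P_D$, and obtain nonnegativity of the trace term from the positive semidefiniteness of $B^\half P B^\half$ and $B^\half(I-P)B^\half$. The only cosmetic difference is that you verify the identity by expanding both sides and matching terms via $P_B M = M$, whereas the paper completes the square and invokes $\la P_B^\perp, B^\half P B^\half\ra = 0$; these are the same facts.
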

\begin{proof}
	Expand the Frobenius norm and use the identity $\opn{Tr}(BP_D) = \opn{Tr}(B^\half P B^\half P_D)$ to obtain
	\begin{align*}
		\|B - I\|_F^2 - \|\wt{B} - I\|_F^2 &= 2\opn{Tr}(B(B^\half P B^\half)) - \opn{Tr}((B^\half P B^\half)^2) - 2\opn{Tr}(B^\half P B^\half) \\
		&\hspace{1em} - \opn{Tr}(P_D^2) + 2\opn{Tr}(P_D) \\
		&=  2\opn{Tr}(B(B^\half P B^\half)) - 2\opn{Tr}((B^\half P B^\half)^2) \\
		&\hspace{1em} + \opn{Tr}((B^\half P B^\half)^2) - 2\opn{Tr}(B^\half P B^\half) + \opn{Tr}(I) \\
		&\hspace{1em} - \opn{Tr}(P_D^2) + 2\opn{Tr}(P_D) - \opn{Tr}(I)
	\end{align*}
	Factoring the above equation produces
	$$\|B - I\|_F^2 - \|\wt{B} - I\|_F^2 = \|I - B^\half P B^\half \|_F^2 - \|I - P_D\|_F^2 + 2\opn{Tr}(B (B^\half P B^\half) - (B^\half P B^\half)^2).$$
	Let $P_B^\perp$ be the projection onto the orthogonal complement of $\opn{Col}(BD)$; hence $I = P_B + P_B^\perp$. Since $\la P_B^\perp, B^\half P B^\half \ra = \opn{Tr}(P_B^\perp BD(D^TBD)^{-1}D^TB) = 0$, we have $\|I - B^\half P B^\half \|_F^2 = \|P_B - B^\half P B^\half\|_F^2 + \|P_B^\perp\|_F^2$. The Frobenius norm of an orthogonal projection is equal to the square root of its rank, and thus
	$$\|I - B^\half P B^\half\|_F^2 - \|I - P_D\|_F^2 = \|P_B - B^\half P B^\half \|_F^2 + \|P_B^\perp\|_F^2 - \|I - P_D\|_F^2 = \|P_B - B^\half P B^\half\|_F^2$$
	This gives the desired equation. Now, observe that
	\begin{align*}
		\opn{Tr}( B(B^\half P B^\half) - (B^\half P B^\half)^2) &= \opn{Tr}(BPB(I - P)) \\
		&= \opn{Tr}((I - P)BPB(I - P)) \geq 0
	\end{align*}
	where in the second equality we have used that $I - P$ is the orthogonal projection onto $\opn{Col}(B^\half D)^\perp$, and is therefore idempotent. This proves $\|\wt{B} - I\|_F \leq \|B - I\|_F$.
\end{proof}

Intuitively, $\wt{B}_{k+1}$ and $\wt{H}_{k+1}$ should be closer approximations of $I$ than $B_k$ and $H_k$. This is made precise in the next lemma.
\begin{LMA}\label{IMPR}
	$\|\wt{B}_{k+1} - I\|_F \leq \|B_k - I\|_F$ and $\|\wt{H}_{k+1} - I\|_F \leq \|H_k - I\|_F$.
\end{LMA}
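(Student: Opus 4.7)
The plan is to establish the two inequalities separately, and to observe that the first one is essentially already contained in the preceding lemma.

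For the inequality $\|\wt{B}_{k+1} - I\|_F \leq \|B_k - I\|_F$: this is stated as the final conclusion of Lemma~\ref{B_IMPR}. The identity derived there,
$$\|B_k - I\|_F^2 - \|\wt{B}_{k+1} - I\|_F^2 = \|P_B - B_k^{1/2} P B_k^{1/2}\|_F^2 + 2\opn{Tr}\bigl(B_k(B_k^{1/2}PB_k^{1/2}) - (B_k^{1/2}PB_k^{1/2})^2\bigr),$$
with both terms on the right-hand side shown to be nonnegative, gives the bound immediately. So no further work is required for this part.

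For the inequality $\|\wt{H}_{k+1} - I\|_F \leq \|H_k - I\|_F$: the plan is to use the explicit formula for $\wt{H}_{k+1}$ stated immediately before the lemma. Write $P_D = D_k(D_k^T D_k)^{-1}D_k^T$ and $P_D^\perp = I - P_D$, so that
$$\wt{H}_{k+1} = P_D + P_D^\perp H_k P_D^\perp.$$
Since $P_D^\perp$ is idempotent, $P_D^\perp I P_D^\perp = P_D^\perp$, and we can write $I = P_D + P_D^\perp I P_D^\perp$. Subtracting,
$$\wt{H}_{k+1} - I = P_D^\perp (H_k - I) P_D^\perp.$$
Finally, I invoke the standard contractive property of orthogonal projections in the Frobenius norm: for any matrix $A$ and any orthogonal projection $P$, $\|PA\|_F \leq \|A\|_F$ and $\|AP\|_F \leq \|A\|_F$, since $\|PA\|_F^2 = \opn{Tr}(A^T P A) \leq \opn{Tr}(A^T A)$ using $P \preceq I$. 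Applying this twice to $A = H_k - I$ yields $\|\wt{H}_{k+1} - I\|_F \leq \|H_k - I\|_F$.

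There is no real obstacle here, since the $H$ formula is already linear in $H_k$ (unlike the $B$ update, which involves $(D^TB_kD)^{-1}$), so sandwiching by the projection onto $\opn{Col}(D_k)^\perp$ delivers the bound directly. The only care needed is the algebraic observation that the affine shift by $P_D$ cancels cleanly because $P_D^\perp$ is idempotent.
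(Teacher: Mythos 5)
Your proof is correct. The first inequality is handled exactly as in the paper, by citing Lemma~\ref{B_IMPR}. For the second inequality you take a genuinely different (though equally short) route: you work directly from the explicit formula $\wt{H}_{k+1} = P_D + P_D^\perp H_k P_D^\perp$, observe that $I = P_D + P_D^\perp I P_D^\perp$ by idempotence, subtract to get $\wt{H}_{k+1} - I = P_D^\perp (H_k - I) P_D^\perp$, and finish with the Frobenius-norm contractivity of orthogonal projections. The paper instead argues conceptually: since $G(x_\ast) = I$, the weighted norm in the variational problem (\ref{eq:VM}) reduces to the Frobenius norm, so $\wt{H}_{k+1}$ is the Frobenius-orthogonal projection of $H_k$ onto the affine subspace $\{\wt{H} \in \Sym^n : \wt{H}D_k = D_k\}$, which contains $I$; the Pythagorean identity for projections onto affine subspaces then gives $\|\wt{H}_{k+1} - I\|_F \leq \|H_k - I\|_F$ immediately. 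The paper's argument is shorter but relies on knowing that the explicit formula actually solves (\ref{eq:VM}) (a fact deferred to Appendix~\ref{appdx:UPD_FORM}); your computation is more self-contained, needing only the displayed formula for $\wt{H}_{k+1}$ and the elementary bound $\|PA\|_F^2 = \opn{Tr}(A^TPA) \leq \opn{Tr}(A^TA)$ for $0 \preceq P \preceq I$. Both are complete proofs.
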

\begin{proof}
	That $\|\wt{B}_{k+1} - I\|_F \leq \|B_k - I\|_F$ was shown in Lemma~\ref{B_IMPR}. Clearly $\|\wt{H}_{k+1} - I\|_F \leq \|H_k - I\|_F$, as $\wt{H}_{k+1}$ is defined as the orthogonal projection of $H_k$ onto the subspace of matrices $\{\wt{H} \in \Sym^n: \wt{H}D_k = D_k\}$, which contains $I$ (see (\ref{eq:VM})).
\end{proof}

\begin{LMA}\label{GROWTH}
	There exists an index $k_0$ and constants $\kappa_1, \kappa_2$ such that $\|B_{k+1} - \wt{B}_{k+1}\|_F \leq \kappa_1 r_k$ and $\|H_{k+1} - \wt{H}_{k+1}\|_F \leq ( \|H_k - I\|_F + 1)\kappa_2 r_k$ for all $k \geq k_0$.
\end{LMA}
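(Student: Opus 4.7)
The plan is to compare the actual update $B_{k+1}$ (resp.\ $H_{k+1}$) to its idealized counterpart $\wt{B}_{k+1}$ (resp.\ $\wt{H}_{k+1}$) by subtracting them term by term, then using Lemma~\ref{LIPSCHITZ} to pick up a factor of $r_k$ wherever the matrix $G_k - I$ appears. For the $B$ bound, $B_{k+1}$ and $\wt{B}_{k+1}$ share the ``subtraction'' term $B_kD_k(D_k^TB_kD_k)^{-1}D_k^TB_k$, so their difference equals $M_k - \wt{M}_k$ where $M_k = G_kD_k(D_k^TG_kD_k)^{-1}D_k^TG_k$ and $\wt{M}_k = D_k(D_k^TD_k)^{-1}D_k^T$.

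To bound $\|M_k - \wt{M}_k\|_F$, I would substitute $G_k = I + (G_k - I)$ into both occurrences of $G_k$ in $M_k$, expand, and subtract $\wt{M}_k$. Each cross term contains at least one factor of $G_k - I$, whose operator norm is at most $\mu r_k$ by Lemma~\ref{LIPSCHITZ}. The residual central piece $D_k(D_k^TG_kD_k)^{-1}D_k^T - D_k(D_k^TD_k)^{-1}D_k^T$ can be rewritten, via $A^{-1} - B^{-1} = A^{-1}(B-A)B^{-1}$, as $D_k(D_k^TG_kD_k)^{-1}D_k^T(I - G_k)D_k(D_k^TD_k)^{-1}D_k^T$, which again carries one factor $\|I - G_k\| \leq \mu r_k$. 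The remaining ``projection-like'' quantities are uniformly bounded in operator norm: $\|D_k(D_k^TG_kD_k)^{-1}D_k^T\| \leq 1/m$ (by writing it as $G_k^{-1/2}\Pi G_k^{-1/2}$ with $\Pi$ the orthogonal projection onto $\opn{Col}(G_k^{1/2}D_k)$ and using $G_k \succeq mI$), and $\|D_k(D_k^TD_k)^{-1}D_k^T\| \leq 1$. Combining everything with $\|X\|_F \leq \sqrt{n}\,\|X\|$ gives $\|B_{k+1} - \wt{B}_{k+1}\|_F \leq \kappa_1 r_k$ for an appropriate $\kappa_1$.

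For the $H$ bound, I would introduce $N_k = D_k(D_k^TG_kD_k)^{-1}D_k^T$, $P_k = D_k(D_k^TG_kD_k)^{-1}D_k^TG_k$, and $\wt{N}_k = \wt{P}_k = D_k(D_k^TD_k)^{-1}D_k^T$, and decompose
\begin{equation*}
H_{k+1} - \wt{H}_{k+1} = (N_k - \wt{N}_k) + (I - P_k)H_k(I - P_k^T) - (I - \wt{P}_k)H_k(I - \wt{P}_k).
\end{equation*}
The $N_k - \wt{N}_k$ piece is $O(r_k)$ by exactly the argument used for $B$. Adding and subtracting $(I - \wt{P}_k)H_k(I - P_k^T)$ splits the conjugation remainder into $(\wt{P}_k - P_k)H_k(I - P_k^T) + (I - \wt{P}_k)H_k(\wt{P}_k - P_k^T)$; each summand carries one factor $\|P_k - \wt{P}_k\|$, which is $O(r_k)$ since $P_k - \wt{P}_k = (N_k - \wt{N}_k) + D_k(D_k^TG_kD_k)^{-1}D_k^T(G_k - I)$, multiplied by $\|H_k\|$, multiplied by uniform bounds on $\|I - P_k\|$ and $\|I - \wt{P}_k\|$. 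These uniform bounds follow from $\|P_k\| \leq M/m$ (obtained by writing $P_k = G_k^{-1} M_k$ with $\|M_k\| \leq M$) and $\|\wt{P}_k\| \leq 1$. Finally, bounding $\|H_k\| \leq \|H_k - I\|_F + 1$ produces the advertised $(\|H_k - I\|_F + 1)\kappa_2 r_k$.

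The main obstacle is really bookkeeping: one must verify that the matrices $(D_k^TG_kD_k)^{-1}$ never degenerate, which requires $G_k \succeq mI$ uniformly. This is precisely why $k_0$ is needed: once all iterates lie in the strong-convexity region $S$ and in the Lipschitz neighborhood of $x_\ast$ (which happens for all sufficiently large $k$ by Theorem~\ref{CONVERGENT}), both the denominator bound and Lemma~\ref{LIPSCHITZ} apply uniformly, and the constants $\kappa_1, \kappa_2$ depend only on $m, M, \mu, n$. Nothing in the argument uses the specific basis chosen for $\opn{Col}(D_k)$ or a fixed value of $q_k$, so the filtering step in Algorithm~\ref{alg:FILTER} creates no additional difficulty.
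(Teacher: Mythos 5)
Your proposal is correct and follows essentially the same route as the paper: cancel the shared term $B_kD_k(D_k^TB_kD_k)^{-1}D_k^TB_k$, expand the remaining difference so that every summand carries a factor of $G_k - I$, and invoke Lemma~\ref{LIPSCHITZ} together with uniform bounds on the projection-like factors and on $\|H_k\|$. The only (harmless) difference is that you control $(D_k^TG_kD_k)^{-1}$ via the resolvent identity and the uniform spectral bounds $\|D_k(D_k^TG_kD_k)^{-1}D_k^T\| \leq 1/m$ and $\|P_k\| \leq M/m$ coming from $mI \preceq G_k \preceq MI$, whereas the paper orthonormalizes the columns of $D_k$ and applies the Sherman--Morrison--Woodbury formula with the asymptotic bounds $\|(I+\Delta_kD_k^T)^{-1}\| \leq 2$ and $\|G_k\| \leq 2$ valid for $k \geq k_0$.
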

\begin{proof}
	Define $\Delta_k = (G_k - I)D_k$. For brevity, let $\wt{B} = \wt{B}_{k+1}, \wt{H} = \wt{H}_{k+1}, H = H_k, D = D_k, G = G_k$, and $\Delta = \Delta_k$. We may assume the columns of $D$ are orthonormal, so $D^TD = I$. By Lemma~\ref{LIPSCHITZ}, every column $\delta_i$ of $\Delta$ satisfies $\|\delta_i\| \leq \mu r_k$, which gives the useful bounds $\|\Delta\|, \|\Delta^T\| \leq \mu\sqrt{q}r_k$. This stems from the fact that a matrix $A$ of rank $q$ satisfies $\|A\| = \|A^T\| \leq \|A\|_F \leq \sqrt{q}\|A\|$, which we will use frequently.
	
	To prove the first inequality, we write
	\begin{align*}
		\|B_{k+1} - \wt{B}\|_F &= \|GD(D^TGD)^{-1}D^TG - DD^T\|_F \\
		&= \| GD(I + D^T\Delta)^{-1}D^TG - DD^T\|_F.
	\end{align*}
	By the Sherman-Morrison-Woodbury formula, $(I + D^T\Delta)^{-1} = I - D^T(I + \Delta D^T)^{-1}\Delta$. Let $X = I + \Delta D^T$. Inserting this expression and using the triangle inequality, we have
	\begin{align*}
		\| GD(I + D^T\Delta)^{-1}D^TG - DD^T\|_F &= \|GDD^TG - DD^T - GDD^TX^{-1}\Delta D^T G\|_F \\
		&\leq \|GDD^TG - DD^T\|_F + \|GDD^TX^{-1}\Delta D^TG\|_F
	\end{align*}
	By a routine calculation,
	\begin{align*}
		\|GDD^TG - DD^T\|_F &= \|\Delta \Delta^T + \Delta D^T + D \Delta^T\|_F,
	\end{align*}
	hence $\|GDD^TG - DD^T\|_F \leq \rho_2 r_k$ for some constant $\rho_2$. 
	
	To bound the Frobenius norm of the other term, we bound its operator norm. Since $\Delta_k \rightarrow 0$ as $r_k \rightarrow 0$, there exists an index $k_0$ such that for $k \geq k_0$,
	\begin{enumerate}
		\item $\|X - I\| \leq \half$, so $\|X^{-1}\| \leq 2$, and
		\item $\|G - I\| \leq 1$, so $\|G\| \leq 2$
	\end{enumerate}
	in which case $\|GDD^TX^{-1}\Delta D^TG\| \leq \rho_3 r_k$ for some $\rho_3$. Taking $\kappa_1 = \rho_2 + \sqrt{q}\rho_3$, we then have $\|B_{k+1} - \wt{B}\|_F \leq \kappa_1 r_k$ for all $k \geq k_0$.
	
	A similar analysis applies to $\|H_{k+1} - \wt{H}\|_F$. Using the triangle inequality,
	\begin{align*}
		\|H_{k+1} - \wt{H}\|_F &\leq \| D(D^TGD)^{-1}D^T - DD^T\|_F \\
		&~+ \|(D(D^TGD)^{-1}D^TG - DD^T)H + H(GD(D^TGD)^{-1}D^T - DD^T)\|_F \\
		&~+ \|D(D^TGD)^{-1}D^TGHGD(D^TGD)^{-1}D^T - DD^THDD^T\|_F
	\end{align*}
	We bound each of the three terms. As before, $(D^TGD)^{-1} = I - D^TX^{-1}\Delta$, so we have $\|D(D^TGD)^{-1}D^T - DD^T\|_F = \|DD^TX^{-1}\Delta D^T\|_F$. For $k \geq k_0$, $\|X^{-1}\| \leq 2$, so $\|D(D^TGD)^{-1}D^T - DD^T\|_F \leq \rho_4 r_k$ for some $\rho_4$.
	
	For the second term, observe that
	\begin{align*}
		GD(D^TGD)^{-1}D^T - DD^T &= \Delta D^T - DD^TX^{-1}\Delta D^T - \Delta D X^{-1} \Delta D^T.
	\end{align*}
	Hence, the norm of the second term is bounded above by $\rho_5 r_k \|H\|$ for some $\rho_5$.
	
	Finally, we bound the operator norm of the third term. Factoring out $D$ and $D^T$ on the left and right, we can write the inside term as
	\begin{align*}D^TGHGD - D^THD &- (D^TX^{-1}\Delta D^TGHGD + D^TGHGD D^TX^{-1}\Delta) \\
		& + D^TX^{-1}\Delta D^TGHGDD^TX^{-1}\Delta.
	\end{align*}
	Since $D^TGHGD - D^THD = \Delta^THD + D^TH\Delta + \Delta^T H \Delta$, the operator norm of the third term is bounded above by $\rho_6 r_k \|H\|$ for some $\rho_6$. Adding together the three terms, there is a constant $\kappa_2$ with $\|H_{k+1} - \wt{H}\|_F \leq (\|H_k - I\|_F + 1)\kappa_2 r_k$.
\end{proof}
Since superlinear convergence is an asymptotic property, we may assume $k_0 = 1$ in Lemma~\ref{GROWTH}. We will also need the following technical result from \cite{DM1}.
\begin{LMA}[3.3 of \cite{DM1}]\label{DM_C}
	Let $\{\nu_k\}$ and $\{\delta_k\}$ be sequences of non-negative numbers such that $\nu_{k+1} \leq (1 + \delta_k)\nu_k + \delta_k$ and $\sum_{k=1}^\infty \delta_k < \infty$. Then $\{\nu_k\}$ converges.
\end{LMA}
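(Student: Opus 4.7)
The plan is to convert the additive-plus-multiplicative recursion into a purely multiplicative one and then identify the transformed sequence with the increment of a convergent infinite product. The key move is to set $\mu_k = \nu_k + 1$, because adding $1$ to both sides of the hypothesis gives
$$\mu_{k+1} \leq (1+\delta_k)\nu_k + \delta_k + 1 = (1+\delta_k)(\nu_k+1) = (1+\delta_k)\mu_k,$$
which eliminates the stray additive $\delta_k$ while leaving convergence of $\{\nu_k\}$ equivalent to convergence of $\{\mu_k\}$.

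Next I would show that the partial products $\Pi_k = \prod_{j=1}^k (1+\delta_j)$ converge to a finite positive limit $\Pi_\infty$. Since $\delta_j \geq 0$, the elementary inequality $\log(1+\delta_j) \leq \delta_j$ together with $\sum_{j=1}^\infty \delta_j < \infty$ shows that $\log \Pi_k$ is a non-decreasing sequence bounded above, hence convergent. Exponentiating yields $\Pi_k \to \Pi_\infty \in [1,\infty)$.

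Finally I would rescale by defining $a_k = \mu_k / \Pi_{k-1}$, with the convention $\Pi_0 = 1$. The recursion $\mu_{k+1} \leq (1+\delta_k)\mu_k$ rewrites as $a_{k+1} \leq a_k$, so $\{a_k\}$ is a non-increasing sequence of non-negative numbers and therefore converges to some limit $a_\infty \geq 0$. Since $\Pi_{k-1} \to \Pi_\infty > 0$, we conclude $\mu_k = a_k \Pi_{k-1} \to a_\infty \Pi_\infty$, and hence $\nu_k = \mu_k - 1$ converges. There is no serious obstacle in this argument: once the substitution $\mu_k = \nu_k + 1$ is made, the remainder is a standard fact about monotone sequences combined with a one-line estimate for the infinite product.
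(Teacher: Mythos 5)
Your proof is correct. Note that the paper does not prove this lemma at all: it is imported verbatim as Lemma~3.3 of Dennis and Mor\'{e} \cite{DM1} and used as a black box, so there is no in-paper argument to compare against; you have supplied a valid, self-contained proof. The substitution $\mu_k = \nu_k + 1$ is exactly the right normalization, since $(1+\delta_k)(\nu_k+1) = (1+\delta_k)\nu_k + \delta_k + 1$ absorbs the stray additive term into the multiplicative factor; after that, $a_k = \mu_k/\Pi_{k-1}$ is non-increasing and bounded below by $0$, and the convergence of $\Pi_k = \prod_{j\le k}(1+\delta_j)$ (via $\log(1+\delta_j) \le \delta_j$ and $\sum_j \delta_j < \infty$) turns the limit of $a_k$ into a limit for $\mu_k$ and hence for $\nu_k$. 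This is arguably cleaner than the original argument in \cite{DM1}, which proceeds by unrolling the recursion to obtain a uniform upper bound of the form $\nu_m \le \bigl(\nu_k + \sum_{j \ge k} \delta_j\bigr)\prod_{j \ge k}(1+\delta_j)$ for $m > k$ and then squeezes $\limsup_m \nu_m$ against $\liminf_k \nu_k$ using the vanishing tails; both routes rest on the same two ingredients (summability of $\delta_k$ and convergence of the infinite product), but yours packages them into a single monotone-sequence argument with no epsilon bookkeeping.
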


\begin{CORO}\label{CONDITIONBD}
	$\{\|B_k - I\|_F \}_{k=1}^\infty$ and $\{\|H_k - I\|_F \}_{k=1}^\infty$ converge, and are therefore uniformly bounded. As an immediate corollary, $\{\|B_k\|_F\}_{k=1}^\infty$ and $\{\|H_k\|_F\}_{k=1}^\infty$ are also uniformly bounded.
\end{CORO}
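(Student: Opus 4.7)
The plan is to apply Lemma~\ref{DM_C} to the sequences $\nu_k = \|B_k - I\|_F$ and $\nu_k = \|H_k - I\|_F$, using the triangle inequality to combine the contraction provided by Lemma~\ref{IMPR} with the perturbation estimate from Lemma~\ref{GROWTH}, and using $\sum_k r_k < \infty$ from Theorem~\ref{RLINEAR} to supply the summable sequence $\{\delta_k\}$.

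For the $B_k$ case, I would write
\[
\|B_{k+1} - I\|_F \leq \|\wt{B}_{k+1} - I\|_F + \|B_{k+1} - \wt{B}_{k+1}\|_F \leq \|B_k - I\|_F + \kappa_1 r_k,
\]
where the first inequality is the triangle inequality, the bound on $\|\wt{B}_{k+1} - I\|_F$ comes from Lemma~\ref{IMPR}, and the bound on $\|B_{k+1} - \wt{B}_{k+1}\|_F$ comes from Lemma~\ref{GROWTH}. Setting $\nu_k = \|B_k - I\|_F$ and $\delta_k = \kappa_1 r_k$, the hypothesis $\nu_{k+1} \leq (1+\delta_k)\nu_k + \delta_k$ of Lemma~\ref{DM_C} is satisfied (in fact with the stronger inequality $\nu_{k+1} \leq \nu_k + \delta_k$), and $\sum_k \delta_k < \infty$ by Theorem~\ref{RLINEAR}, so $\{\|B_k - I\|_F\}$ converges.

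For the $H_k$ case, the same triangle-inequality splitting, together with the second bound of Lemma~\ref{GROWTH}, gives
\[
\|H_{k+1} - I\|_F \leq \|H_k - I\|_F + \kappa_2 r_k (\|H_k - I\|_F + 1) = (1 + \kappa_2 r_k)\|H_k - I\|_F + \kappa_2 r_k,
\]
which is exactly the form required by Lemma~\ref{DM_C} with $\nu_k = \|H_k - I\|_F$ and $\delta_k = \kappa_2 r_k$. Since $\sum_k \delta_k < \infty$, $\{\|H_k - I\|_F\}$ converges. Any convergent sequence of non-negative reals is uniformly bounded, and then the triangle inequality $\|B_k\|_F \leq \|B_k - I\|_F + \sqrt{n}$ (and likewise for $H_k$) yields the corollary's final statement.

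No significant obstacle is expected here: the result is essentially a bookkeeping application of Lemma~\ref{DM_C}, and all the necessary ingredients have already been assembled in Lemmas~\ref{IMPR}, \ref{GROWTH}, and Theorem~\ref{RLINEAR}. The only minor point to be careful about is to verify that the inequality for the $B_k$ case fits the (slightly weaker) hypothesis of Lemma~\ref{DM_C}, which it does trivially since $(1+\delta_k)\nu_k + \delta_k \geq \nu_k + \delta_k$.
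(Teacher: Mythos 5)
Your proposal is correct and follows essentially the same route as the paper: a triangle-inequality split through $\wt{B}_{k+1}$ (resp. $\wt{H}_{k+1}$) combining Lemma~\ref{IMPR} with Lemma~\ref{GROWTH}, followed by Lemma~\ref{DM_C} with $\delta_k$ proportional to $r_k$ and summability from Theorem~\ref{RLINEAR}. The only difference is cosmetic — you treat the $B_k$ case explicitly (noting it satisfies the stronger inequality $\nu_{k+1}\leq\nu_k+\delta_k$) where the paper writes out the $H_k$ case and says the same reasoning applies to $B_k$.
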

\begin{proof}
	By \Cref{IMPR} and \Cref{GROWTH}, we have
	$$ \|H_{k+1} - I\|_F \leq \|H_{k+1} - \wt{H}_{k+1}\|_F + \|\wt{H}_{k+1} - I\|_F \leq (1 + \kappa_2 r_k)\|H_k - I\|_F + \kappa_2 r_k$$
	Set $\nu_k = \|H_k - I\|_F$ and $\delta_k = \kappa_2 r_k$ in \Cref{DM_C}. Since $\sum_{k=1}^\infty r_k < \infty$, the sequence $\{\|H_k - I\|_F\}$ converges. The same reasoning applies to $\{\|B_k - I\|_F\}$.
\end{proof}

\begin{LMA}\label{psiphiLim}
	Recall the notation introduced in \Cref{B_IMPR}: $P_k$ is the orthogonal projection onto $\opn{Col}(B_k^\half D_k)$, and $P_{B_k}$ the orthogonal projection onto $\opn{Col}(B_kD_k)$. Define the quantities $\varphi_k, \psi_k$ to be
	\begin{align*}
		\varphi_k &= \|P_{B_k} - B_k^\half P_k B_k^\half \|_F^2  \\
		\psi_k &= \opn{Tr}(B_k( B_k^\half P_k B_k^\half) - (B_k^\half P_k B_k^\half)^2)
	\end{align*}
	Then $\lim\limits_{k \rightarrow \infty} \varphi_k= 0$ and $\lim\limits_{k \rightarrow \infty} \psi_k = 0$.
\end{LMA}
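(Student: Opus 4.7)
The plan is to show that $\sum_k \varphi_k < \infty$ and $\sum_k \psi_k < \infty$, from which the limits immediately follow since both sequences are non-negative (by \Cref{B_IMPR}). The core identity supplied by \Cref{B_IMPR} is
\[
\|B_k - I\|_F^2 - \|\wt{B}_{k+1} - I\|_F^2 \;=\; \varphi_k + 2\psi_k,
\]
so I want to bound the right-hand side by quantities that telescope or sum. The obstacle is that the natural telescoping would require $\|\wt{B}_{k+1} - I\|_F$ to equal $\|B_{k+1} - I\|_F$, which it does not; however, by \Cref{GROWTH} these two quantities differ by at most $\kappa_1 r_k$, and by \Cref{RLINEAR} we have $\sum_k r_k < \infty$. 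This is exactly the perturbation tolerance needed.

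Concretely, I would carry out the following steps. First, from $\|\wt{B}_{k+1} - B_{k+1}\|_F \leq \kappa_1 r_k$ (\Cref{GROWTH}) and the reverse triangle inequality, I get
\[
\|\wt{B}_{k+1} - I\|_F \;\geq\; \|B_{k+1} - I\|_F - \kappa_1 r_k.
\]
Squaring this and using $\|\wt{B}_{k+1} - I\|_F \leq \|B_k - I\|_F \leq C$ (bounded by \Cref{CONDITIONBD}), I obtain
\[
\|\wt{B}_{k+1} - I\|_F^2 \;\geq\; \|B_{k+1} - I\|_F^2 - 2C\kappa_1 r_k - \kappa_1^2 r_k^2.
\]
Substituting into the identity from \Cref{B_IMPR} yields
\[
\varphi_k + 2\psi_k \;\leq\; \|B_k - I\|_F^2 - \|B_{k+1} - I\|_F^2 + 2C\kappa_1 r_k + \kappa_1^2 r_k^2.
\]

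Finally, I would sum this inequality from $k=1$ to $\infty$. The first two terms telescope, and since $\|B_k - I\|_F$ converges (\Cref{CONDITIONBD}) this telescoping sum is finite. The remaining tail contributes $2C\kappa_1 \sum_k r_k + \kappa_1^2 \sum_k r_k^2 < \infty$, because $\sum_k r_k < \infty$ by \Cref{RLINEAR} (and thus also $\sum r_k^2 < \infty$). Therefore $\sum_k (\varphi_k + 2\psi_k) < \infty$, and since $\varphi_k, \psi_k \geq 0$, each sequence converges to $0$.

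The main subtlety lies in step one: passing from the additive perturbation bound $\|\wt{B}_{k+1} - B_{k+1}\|_F \leq \kappa_1 r_k$ to a quadratic statement that preserves the non-negativity of $\varphi_k + 2\psi_k$. The uniform bound on $\|B_k - I\|_F$ from \Cref{CONDITIONBD} is what makes this square-free cross-term manageable, which is why the preliminary work establishing uniform boundedness of $\{\|B_k\|_F\}$ was essential.
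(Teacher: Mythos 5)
Your proposal is correct and follows essentially the same route as the paper's proof: both use the perturbation bound from \Cref{GROWTH} together with the uniform bound from \Cref{CONDITIONBD} to show $\|\wt{B}_{k+1}-I\|_F^2 \geq \|B_{k+1}-I\|_F^2 - O(r_k)$, then telescope and invoke $\sum_k r_k < \infty$ from \Cref{RLINEAR} to conclude $\sum_k(\varphi_k + 2\psi_k) < \infty$, whence both non-negative sequences vanish. The only cosmetic difference is that the paper phrases the conclusion as $\|B_k-I\|_F^2 - \|\wt{B}_{k+1}-I\|_F^2 \to 0$ before identifying this quantity with $\varphi_k + 2\psi_k$, which is equivalent to your summability argument.
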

\begin{proof}
	We first bound $\|\wt{B}_{k+1} - I\|_F^2$ in terms of $\|B_{k+1} - I\|_F^2$. By \Cref{GROWTH}, $\|B_{k+1} - \wt{B}_{k+1}\|_F \leq \kappa_1 r_k$. Let $\kappa_3 = 2 \kappa_1 \max\limits_k \{\|B_k - I\|_F\}$; by \Cref{CONDITIONBD}, the maximum exists. Using the triangle inequality, we have
	\begin{align*}
		\|\wt{B}_{k+1} - I\|_F^2 &\geq (\|B_{k+1} - I\|_F - \|B_{k+1} - \wt{B}_{k+1}\|_F)^2 \\
		&= \|B_{k+1} - I\|_F^2 - 2\|B_{k+1} - I\|_F\|B_{k+1} - \wt{B}_{k+1}\|_F + \|B_{k+1} - \wt{B}_{k+1}\|_F^2 \\
		&\geq \|B_{k+1} - I\|_F^2 - \kappa_3 r_k.
	\end{align*}
	By \Cref{B_IMPR}, $\|B_k - I\|_F^2 - \|\wt{B}_{k+1} - I\|_F^2 \geq 0$. Summing over $k$ and telescoping, we find that
	\begin{align*}
		\sum_{k=1}^\infty \left( \|B_k - I\|_F^2 - \|\wt{B}_{k+1} - I\|_F^2 \right) &\leq \sum_{k=1}^\infty \left( \|B_k - I\|_F^2 - \|B_{k+1} - I\|_F^2 \right) + \kappa_3 r_{k+1} \\
		&\leq \|B_1 - I\|_F^2 + \kappa_3 \sum_{k=1}^\infty r_{k+1} < \infty
	\end{align*}
	from which we deduce that $\|B_k - I\|_F^2 - \|\wt{B}_{k+1} - I\|_F^2 \rightarrow 0$. Expressed in terms of $\varphi_k$ and $\psi_k$, \Cref{B_IMPR} states that $\|B_k - I\|_F^2 - \|\wt{B}_{k+1} - I\|_F^2 = \varphi_k + 2\psi_k$ and $\varphi_k, \psi_k \geq 0$. Hence $\varphi_k, \psi_k$ converge to 0.
\end{proof}

Note that \Cref{psiphiLim} does not imply that $\|B_k - I\|_F \rightarrow 0$, since it is possible for $\limsup \|\wt{B}_{k+1} - I\|_F > 0$. It is well-known that for the classical BFGS method, the Hessian approximation $B_k$ might not converge to the Hessian at the optimal solution.

\begin{LMA}\label{LIMITS}
	For any $w_k \in \opn{Col}(D_k)$,
	$$\left( 1 - \frac{ w_k^TB_k^2w_k}{w_k^TB_kw_k} \right)^2 \leq \varphi_k \hspace{1em} \text{ and } \hspace{1em} 0 \leq \frac{w_k^TB_k^3w_k}{w_k^TB_kw_k} - \left( \frac{w_k^TB_k^2w_k}{w_k^TB_kw_k} \right)^2 \leq \varphi_k + \psi_k,$$
	where $\varphi_k, \psi_k$ are defined in \Cref{psiphiLim}. Consequently, for any sequence $\{w_k\}_{k=1}^\infty$ with $w_k \in \opn{Col}(D_k)$, we have $\lim\limits_{k \rightarrow \infty} \frac{ w_k^TB_k^2w_k}{w_k^TB_kw_k} = 1$ and $\lim\limits_{k \rightarrow \infty} \frac{w_k^TB_k^3w_k}{w_k^TB_kw_k} = 1$.
\end{LMA}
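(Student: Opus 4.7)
The plan is to reduce both quantities to standard Rayleigh quotients on the subspace $V := \opn{Col}(B_k^{1/2}D_k)$. Writing $B = B_k, D = D_k, P = P_k, P_B = P_{B_k}$, the substitution $v := B^{1/2}w_k \in V$ yields $\frac{w_k^T B^{j+1} w_k}{w_k^T B w_k} = \frac{v^T B^j v}{v^T v}$ for $j = 1, 2$, so the quantities to control are ordinary Rayleigh quotients over $V$. I would then introduce an orthonormal basis $U$ for $V$, so that $P = UU^T$, and write $v = U\alpha$. With $M := U^T B U$ and $M_2 := U^T B^2 U$, the targets become $\alpha^T M\alpha / \alpha^T\alpha$ and $\alpha^T M_2 \alpha / \alpha^T\alpha$.

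The key algebraic step is to recast $\varphi_k$ and $\psi_k$ in this basis. Since $B^{1/2}U$ spans $\opn{Col}(BD)$ with Gram matrix $M$, one has $P_B = B^{1/2}U M^{-1} U^T B^{1/2}$, while plainly $B^{1/2}PB^{1/2} = B^{1/2}UU^T B^{1/2}$. Subtracting gives $P_B - B^{1/2}PB^{1/2} = B^{1/2}U(M^{-1} - I)U^TB^{1/2}$, and a short trace computation (using $M(M^{-1}-I) = I - M$) collapses to $\varphi_k = \|I - M\|_F^2$. A parallel calculation yields $\psi_k = \opn{Tr}(M_2 - M^2)$.

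The two inequalities now follow from Rayleigh-quotient arguments. The first is immediate since $I - M$ is symmetric: $|1 - (\alpha^T M\alpha)/(\alpha^T\alpha)| = |\alpha^T(I - M)\alpha|/(\alpha^T\alpha) \leq \|I - M\| \leq \|I - M\|_F = \sqrt{\varphi_k}$. The lower bound $0 \leq v^TB^2v/v^Tv - (v^T Bv/v^Tv)^2$ is Cauchy--Schwarz applied to $(v, Bv)$. For the upper bound I would split
\begin{equation*}
\frac{\alpha^T M_2 \alpha}{\alpha^T\alpha} - \left(\frac{\alpha^T M\alpha}{\alpha^T\alpha}\right)^2 = \frac{\alpha^T(M_2 - M^2)\alpha}{\alpha^T\alpha} + \left[\frac{\alpha^T M^2 \alpha}{\alpha^T\alpha} - \left(\frac{\alpha^T M\alpha}{\alpha^T\alpha}\right)^2\right].
\end{equation*}
The first summand is nonnegative since $M_2 - M^2 = U^T B(I - UU^T)BU \succeq 0$, and is bounded above by $\opn{Tr}(M_2 - M^2) = \psi_k$. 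Writing $\alpha$ in an eigenbasis of $M$ with eigenvalues $\lambda_i$ and coefficients $\beta_i$, the second summand is the variance of $\{\lambda_i\}$ under the probability weights $\beta_i^2/\|\alpha\|^2$; bounding the variance by the second moment around $1$ yields $\sum_i (\beta_i^2/\|\alpha\|^2)(1 - \lambda_i)^2 \leq \sum_i (1 - \lambda_i)^2 = \|I - M\|_F^2 = \varphi_k$. The limit statements follow at once from \Cref{psiphiLim}.

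The main obstacle is the identity $\varphi_k = \|I - M\|_F^2$: the operator $B^{1/2}PB^{1/2}$ is not itself an orthogonal projection, so one cannot simply compare it to $P_B$ as two competing projections. Once the $U$-factorization is in hand, the rest of the proof reduces to standard symmetric-matrix manipulation and one elementary variance inequality.
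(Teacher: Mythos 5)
Your proof is correct, and all the key identities check out: $P_B = B^{1/2}UM^{-1}U^TB^{1/2}$, $\varphi_k = \opn{Tr}\bigl((M^{-1}-I)M(M^{-1}-I)M\bigr) = \|I-M\|_F^2$, and $\psi_k = \opn{Tr}(M_2 - M^2)$ with $M_2 - M^2 = U^TB(I-UU^T)BU \succeq 0$. The route differs from the paper's in one structural respect. The paper also reduces everything to the Gram matrices of a basis for $\opn{Col}(D_k)$, but it chooses the basis \emph{adaptively}: a $B_k$-conjugate basis $\{v_1,\ldots,v_{q_k}\}$ with $v_1 = w_k$, so that $D^TB D$ is diagonal and the quantity $\bigl(1 - \tfrac{w_k^TB^2w_k}{w_k^TBw_k}\bigr)^2$ appears literally as part of the first diagonal entry of $(I - D^TB^2D(D^TBD)^{-1})^2$, whose trace is $\varphi_k$; both inequalities then follow from nothing more than the non-negativity of the remaining diagonal terms (plus Cauchy--Schwarz for the lower bound, as in yours). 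You instead fix an orthonormal basis once and for all and pay for the loss of adaptivity with two extra estimates: the operator-norm bound $|\alpha^T(I-M)\alpha|/\alpha^T\alpha \leq \|I-M\|_F$ and the variance inequality $E[\lambda^2]-(E[\lambda])^2 \leq \sum_i(1-\lambda_i)^2$. Your version is arguably more modular --- the identities $\varphi_k = \|I-M\|_F^2$ and $\psi_k = \opn{Tr}(M_2-M^2)$ are clean statements independent of $w_k$ --- while the paper's adapted basis makes the bounds drop out with no further work. Both yield exactly the constants $\varphi_k$ and $\varphi_k + \psi_k$ claimed in the lemma.
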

\begin{proof}
	For a fixed $k$, let $B = B_k, D = D_k$, and let $\Delta = (D^TB^2D)^{-1} - (D^TBD)^{-1}$. Recall the definitions of $P, P_B$ from \Cref{B_IMPR}. We can write
	\begin{align*}
		\varphi_k = \|P_B - B^\half P B^\half\|_F^2 = \opn{Tr}((BD\Delta D^TB)^2) &= \opn{Tr}(D^TB^2D \Delta D^TB^2D \Delta)\\
		&= \opn{Tr}((I - D^TB^2D(D^TBD)^{-1})^2)
	\end{align*}
	Take a $B_k$-orthogonal basis $\{v_1,\ldots,v_{q_k}\}$ for $\opn{Col}(D_k)$ with $v_1 = w_k$. The $i$-th diagonal entry of $(I - D^TB^2D(D^TBD)^{-1})^2$ is then
	$$\left( 1 - \frac{ v_i^TB^2v_i}{v_i^TB v_i} \right)^2 + \sum_{j \neq i} \frac{ (v_i^TB^2v_j)^2}{v_i^TBv_i v_j^TBv_j}$$
	Since every term is non-negative, we conclude that $\left( 1 - \frac{ w_k^TB^2w_k}{w_k^TBw_k} \right)^2 \leq \varphi_k$, which proves the first statement. Also, notice that $\sum_{i=1}^{q_k} \sum_{j \neq i} \frac{ (v_i^TB^2v_j)^2}{v_i^TBv_i v_j^TBv_j} \leq \varphi_k$.
	
	Next, write $\opn{Tr}(B(B^\half P B^\half)) = \opn{Tr}(D^TB^3D(D^TBD)^{-1})$ and  $\opn{Tr}((B^\half P B^\half)^2) = \opn{Tr}((D^TB^2D(D^TBD)^{-1})^2)$. Again taking a $B_k-$orthogonal basis $\{v_1,\ldots,v_{q_k}\}$, we have
	\begin{align*}
		\opn{Tr}(D^TB^3D(D^TBD)^{-1}) &= \sum_{i=1}^{q_k} \frac{v_i^TB^3v_i}{v_i^TBv_i}\\
		\opn{Tr}((D^TB^2D(D^TBD)^{-1})^2) &= \sum_{i=1}^{q_k} \left( \frac{v_i^TB^2v_i}{v_i^TBv_i} \right)^2 + \sum_{i=1}^{q_k} \sum_{j \neq i} \frac{ (v_i^TB^2v_j)^2}{v_i^TBv_i v_j^TBv_j}
	\end{align*}
	Thus
	\begin{align*}
		\opn{Tr}(B(B^\half P B^\half) - (B^\half P B^\half)^2) &= \sum_{i=1}^{q_k} \left( \frac{v_i^TB^3v_i}{v_i^TBv_i} - \left( \frac{v_i^TB^2v_i}{v_i^TBv_i} \right)^2 \right) - \sum_{i=1}^{q_k} \sum_{j \neq i} \frac{ (v_i^TB^2v_j)^2}{v_i^TBv_i v_j^TBv_j} \\
		&\geq \sum_{i=1}^{q_k} \left( \frac{v_i^TB^3v_i}{v_i^TBv_i} - \left( \frac{v_i^TB^2v_i}{v_i^TBv_i} \right)^2 \right) - \varphi_k
	\end{align*}
	By the Cauchy-Schwarz inequality applied to $v^TB^2v = \la B^\half v, B^\frac{3}{2}v\ra$, we have $\frac{v^TB^3v}{v^TBv} \geq \left(\frac{v^TB^2v}{v^TBv} \right)^2$ for every $v \in \RR^n$. Hence $0 \leq \frac{w_k^TB^3w_k}{w_k^TBw_k} - \left(\frac{w_k^TB^2w_k}{w_k^TBw_k} \right)^2 \leq \varphi_k + \psi_k$. The limits then follow from \Cref{psiphiLim}, since $\varphi_k, \psi_k \rightarrow 0$.
\end{proof}

\begin{CORO}\label{DM_SUP}
	Given any $w_k \in \opn{Col}(D_k)$,
	$$\frac{\|(B_k - I) w_k \|}{ \|w_k\|} \leq \sqrt{2\varphi_k + \psi_k}$$
	Consequently, for any sequence $\{w_k\}_{k=1}^\infty$ with $w_k \in \opn{Col}(D_k)$,
	$$\lim_{k \rightarrow \infty} \frac{\|(B_k - I)w_k\|}{\|w_k\|} = 0$$
\end{CORO}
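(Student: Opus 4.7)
The plan is to work in the $B_k$-weighted inner product, since the three quantities $w_k^TB_k w_k$, $w_k^TB_k^2 w_k$, $w_k^TB_k^3 w_k$ appearing in \Cref{LIMITS} naturally assemble into $\|(B_k - I)w_k\|_{B_k}^2 = w_k^T B_k (B_k - I)^2 w_k = c_k - 2 b_k + a_k$, where I abbreviate $a_k = w_k^T B_k w_k$, $b_k = w_k^T B_k^2 w_k$, $c_k = w_k^T B_k^3 w_k$. Since $\|w_k\|_{B_k}^2 = a_k$, the task reduces to bounding the ratio $(c_k - 2 b_k + a_k)/a_k$.

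The crucial algebraic step is the identity
\begin{equation*}
\frac{c_k}{a_k} - 2 \cdot \frac{b_k}{a_k} + 1 = \left(\frac{c_k}{a_k} - \frac{b_k^2}{a_k^2}\right) + \left(\frac{b_k}{a_k} - 1\right)^2,
\end{equation*}
which splits the $B_k$-norm ratio into exactly the two nonnegative quantities controlled by \Cref{LIMITS}. The first parenthesis is at most $\varphi_k + \psi_k$ (by the second inequality there) and the second is at most $\varphi_k$ (by the first inequality), yielding $\|(B_k - I)w_k\|_{B_k}^2 \leq (2\varphi_k + \psi_k)\|w_k\|_{B_k}^2$, which is the stated bound once one reads $\|\cdot\|$ as the $B_k$-norm.

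To pass to the Euclidean norm I would invoke \Cref{CONDITIONBD}: since $\|B_k - I\|_F$ and $\|H_k - I\|_F$ are both uniformly bounded, the spectrum of $B_k$ lies in a fixed compact interval $[m', M'] \subset (0,\infty)$, so the $B_k$-inner product and the Euclidean inner product induce equivalent norms with $k$-independent constants. Combined with the previous step, this gives $\|(B_k - I)w_k\|/\|w_k\| \leq C\sqrt{2\varphi_k + \psi_k}$ for a universal $C$, and the limit $\|(B_k - I)w_k\|/\|w_k\| \to 0$ then follows at once from \Cref{psiphiLim}. The only nontrivial step is recognizing the decomposition identity that matches the two inequalities in \Cref{LIMITS} term-by-term; after that, everything is routine bookkeeping.
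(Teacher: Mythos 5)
Your proof is correct and is essentially identical to the paper's: the same algebraic identity $\tfrac{c}{a} - 2\tfrac{b}{a} + 1 = \bigl(\tfrac{c}{a} - \tfrac{b^2}{a^2}\bigr) + \bigl(1 - \tfrac{b}{a}\bigr)^2$ applied to the $B_k$-weighted ratio $\|B_k^{1/2}(B_k-I)w_k\|/\|B_k^{1/2}w_k\|$, bounded term-by-term via \Cref{LIMITS}, followed by the uniform boundedness of $\|B_k\|$ and $\|H_k\|$ from \Cref{CONDITIONBD} to pass to Euclidean norms. If anything you are slightly more careful than the paper, which states the Euclidean bound without the norm-equivalence constant $C$ that your argument (correctly) introduces; this does not affect the limit statement, which is what is actually used downstream.
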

\begin{proof}
	By \Cref{LIMITS} and a routine calculation,
	\begin{align*}
		\frac{ \| B_k^\frac{1}{2} (B_k - I) w_k \|}{ \|B_k^\frac{1}{2}w_k\|} &= \sqrt{ \frac{ w_k^TB_k^3w_k}{w_k^TB_kw_k} - 2\frac{w_k^TB_k^2w_k}{w_k^TB_kw_k} + 1} \\
		&= \sqrt{ \frac{ w_k^TB_k^3w_k}{w_k^TB_kw_k} - \left( \frac{w_k^TB_k^2w_k}{w_k^TB_kw_k} \right)^2 + \left( 1 - \frac{w_k^TB_k^2w_k}{w_k^TB_kw_k} \right)^2 } \\
		&\leq \sqrt{2\varphi_k + \psi_k}
	\end{align*}
	Since $\{\|B_k\|\}, \{\|H_k\|\}$ are uniformly bounded by \Cref{CONDITIONBD}, the result follows.
\end{proof}

\begin{LMA}\label{STEPSIZE_1}
	A step size of $\lambda_k = 1$ is eventually admissible for steps $d_k$ included in $D_k$.
\end{LMA}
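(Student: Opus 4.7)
The plan is to combine the Dennis--Mor\'e-style estimate in Corollary~\ref{DM_SUP} with second-order Taylor expansions, so that at $\lambda = 1$ both the Armijo and Wolfe conditions reduce to ratio inequalities that hold automatically because $\alpha < \tfrac{1}{2}$ and $\beta < 1$.

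First I would observe that $\ux{d}{k}{i}$ lies in $\opn{Col}(D_k)$: if $\ux{s}{k}{i}$ is retained in $D_k$ then $\ux{s}{k}{i}$ is literally a column of $D_k$, and the identity $\ux{s}{k}{i} = \ux{\lambda}{k}{i}\ux{d}{k}{i}$ with $\ux{\lambda}{k}{i} > 0$ makes $\ux{d}{k}{i}$ a nonzero scalar multiple of that column. Corollary~\ref{DM_SUP} applied with $w_k = \ux{d}{k}{i}$ then gives $\|(B_k - I)\ux{d}{k}{i}\|/\|\ux{d}{k}{i}\| \to 0$, hence
$$\frac{(\ux{d}{k}{i})^T B_k \ux{d}{k}{i}}{\|\ux{d}{k}{i}\|^2} \longrightarrow 1.$$
Since $\ux{x}{k}{i} \to x_\ast$ by Theorem~\ref{CONVERGENT}, the gradient $\ux{g}{k}{i} \to 0$, and because $\|H_k\|$ is uniformly bounded by Corollary~\ref{CONDITIONBD}, we have $\ux{d}{k}{i} = -H_k\ux{g}{k}{i} \to 0$. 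Any intermediate point $\tilde x$ on the segment from $\ux{x}{k}{i}$ to $\ux{x}{k}{i}+\ux{d}{k}{i}$ therefore tends to $x_\ast$, and $G(\tilde x) \to G(x_\ast) = I$ gives $(\ux{d}{k}{i})^T G(\tilde x)\ux{d}{k}{i}/\|\ux{d}{k}{i}\|^2 \to 1$.

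With these two limits in hand I would dispatch the Armijo and Wolfe conditions at $\lambda = 1$ in parallel. Using $\ux{g}{k}{i} = -B_k\ux{d}{k}{i}$ and a second-order Taylor expansion of $f$,
$$f(\ux{x}{k}{i}+\ux{d}{k}{i}) - f(\ux{x}{k}{i}) - \alpha\la \ux{g}{k}{i}, \ux{d}{k}{i}\ra = -(1-\alpha)(\ux{d}{k}{i})^T B_k \ux{d}{k}{i} + \tfrac{1}{2}(\ux{d}{k}{i})^T G(\tilde x)\ux{d}{k}{i},$$
and after dividing by $\|\ux{d}{k}{i}\|^2$ the right-hand side tends to $\tfrac{1}{2} - (1 - \alpha) < 0$, so (\ref{armijo}) holds at $\lambda = 1$ for large $k$. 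Likewise, Taylor on $g$ gives $g(\ux{x}{k}{i}+\ux{d}{k}{i}) - \ux{g}{k}{i} = G(\tilde x')\ux{d}{k}{i}$ for an intermediate $\tilde x'$, which reduces (\ref{wolfe}) at $\lambda = 1$ to $(\ux{d}{k}{i})^T G(\tilde x')\ux{d}{k}{i} \geq (1-\beta)(\ux{d}{k}{i})^T B_k\ux{d}{k}{i}$, whose two sides scale to $1$ and $1-\beta < 1$ respectively.

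The only genuine obstacle is verifying that Corollary~\ref{DM_SUP} can be applied to the step direction $\ux{d}{k}{i}$ itself, which is immediate from the hypothesis that $\ux{s}{k}{i}$ is retained; once that is settled, the remainder is routine second-order bookkeeping. It is worth emphasizing that admissibility of $\lambda = 1$ is asserted only along the subsequence of retained steps, which is exactly the regime where the Dennis--Mor\'e estimate applies, and the standing assumption $\tau \leq m$ is precisely what guarantees this subsequence is nonempty in every block.
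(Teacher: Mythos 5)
Your proposal is correct and follows essentially the same route as the paper: apply Corollary~\ref{DM_SUP} to $d_k \in \operatorname{Col}(D_k)$ so that $d_k^TB_kd_k/\|d_k\|^2 \to 1$, then verify (\ref{armijo}) and (\ref{wolfe}) at $\lambda = 1$ via second-order Taylor expansions, using $\alpha < \tfrac12$ and $\beta < 1$ respectively. The only difference is stylistic --- you argue by limits where the paper tracks explicit constants $\gamma$ and $\rho_7$ via the Lipschitz bound on $G$ --- and your reduction of the Wolfe condition should technically be phrased as Taylor applied to the scalar map $t \mapsto \langle g(x_k + td_k), d_k\rangle$ (as the paper does) rather than to the vector-valued $g$ itself, but the quantity $d_k^TG(\wt{x}')d_k$ you need is exactly what that yields.
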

\begin{proof}
	We check that $\lambda_k = 1$ satisfies the Armijo-Wolfe conditions for all sufficiently large $k$. Let $\alpha$ and $\beta$ be the Armijo-Wolfe parameters and choose a constant $\gamma$ such that $0 < \gamma < \frac{\half - \alpha}{1 - \alpha}$. By \Cref{DM_SUP}, for all sufficiently large $k$, the steps $d_k \in \opn{Col}(D_k)$ satisfy
	\begin{equation}\label{eq:gammaBd}\frac{\|(B_k - I) d_k \|}{ \|d_k\|} \leq \gamma
	\end{equation}
	in which case $\la g_k, d_k \ra = \la g_k + d_k, d_k \ra - \|d_k\|^2 \leq -(1 - \gamma) \|d_k\|^2$.
	
	By Taylor's theorem, there exists a point $\wt{x}_k$ on the line segment joining $x_k, x_k + d_k$ with $f(x_k + d_k) = f(x_k) + \la g_k, d_k\ra + \half d_k^TG(\wt{x}_k) d_k$. Since $f(x_k) \leq f(\ux{x}{k-1}{q+1})$, the strong convexity of $f$ implies that $\|x_k - x_\ast\| \leq \sqrt{M/m}~r_{k-1}$. Hence, taking $\rho_7 = \mu \sqrt{M/m}$, we have $\|G(\wt{x}_k) - I\| \leq \mu \|\wt{x}_k - x_\ast\| \leq \rho_7 (r_{k-1} + \|d_k\|)$. For the step size $\lambda_k = 1$,
	\begin{align*}
		f(x_k + d_k) - f(x_k) &= \alpha \la g_k, d_k\ra + (1 - \alpha) \la g_k, d_k \ra +  \half d_k^TG(\wt{x})d_k \\
		&\leq \alpha \la g_k, d_k\ra - \left((1-\alpha)(1-\gamma) -  1/2 -  (\rho_7/2) (r_{k-1} + \|d_k\|) \right) \|d_k\|^2
	\end{align*}
	Since $(1-\alpha)(1-\gamma) - 1/2 > 0$ and $r_{k-1} + \|d_k\| \rightarrow 0$, a step size of $\lambda_k = 1$ satisfies the Armijo condition (\ref{armijo}) for all sufficiently large $k$.
	
	Next, apply Taylor's theorem to the function $t \mapsto \la g(x_k + td_k), d_k \ra$ to obtain a point $\wt{x}_k$ on the line segment joining $x_k, x_k + d_k$ with $\la g(x_k + d_k), d_k \ra = \la g_k, d_k \ra + d_k^TG(\wt{x}_k)d_k$. Choosing $\gamma = \frac{\beta}{2 - \beta}$ in (\ref{eq:gammaBd}), \Cref{DM_SUP} implies that for sufficiently large $k$, $\la -g_k, d_k \ra = \la g_k + d_k, -d_k \ra + \|d_k\|^2 \leq (1-\half \beta)^{-1} \|d_k\|^2$. We can also take $k$ large enough so that $1 - \rho_7 (r_{k-1} + \|d_k\|) \geq 0$, and we then have
	\begin{align*}
		\la g(x_k + d_k),d_k \ra &\geq \la g_k, d_k  \ra + (1 - \rho_7 (r_{k-1} + \|d_k\|))\|d_k\|^2 \\
		&\geq (\beta/2 + ( 1 - \beta/2) \rho_7 (r_{k-1} + \|d_k\|) ) \la g_k, d_k \ra
	\end{align*}
	Thus, the Wolfe condition (\ref{wolfe}) is satisfied for all sufficiently large $k$.
\end{proof}

\Cref{STEPSIZE_1} applies only to steps $d_k$ included in $D_k$. However, since Block BFGS does not prefer any particular step for inclusion in $D_k$, it is likely that eventually $\lambda_k = 1$ is admissible for \emph{all} steps. This issue reveals a subtle artifact of the proof method, and we return to discuss it in the remark after the following proof of \Cref{SUPERLINEAR}.

\begin{proof}(of \Cref{SUPERLINEAR})
	Let $\ux{s}{k}{i}$ be any step included in $D_k$. To simplify the notation, we write $x = \ux{x}{k}{i}, x^+ = \ux{x}{k}{i+1}, g = \ux{g}{k}{i}, g^+ = \ux{g}{k}{i+1}$, and $d = \ux{d}{k}{i}, s = \ux{s}{k}{i}$. By Lemma~\ref{STEPSIZE_1}, eventually $\lambda = 1$ is admissible for all steps in $D_k$, so $s = d$. From the triangle inequality, $\|d\| \leq \|x - x_\ast\| + \|x^+ - x_\ast\|$, so
	\begin{equation}\label{eq:g_over_r}
		\frac{\|g^+\|}{\|d\|} \geq \frac{m\|x^+ - x_\ast\|}{\|x - x_\ast\| + \|x^+ - x_\ast\|}.
	\end{equation}
	Next, write
	\begin{align*}\frac{\|(B_k - I)d\|}{\|d\|} &= \frac{\|g(x + d) - g(x) - G(x_\ast)d - g(x + d)\|}{\|d\|} \\
		&\geq \frac{\|g(x + d)\|}{\|d\|} - \frac{\|g(x + d) - g(x) - G(x_\ast)d\|}{\|d\|}.
	\end{align*}
	By continuity of the Hessian, the second term converges to 0. Thus, Corollary~\ref{DM_SUP} implies that $\frac{\|g^+\|}{\|d\|} = \frac{\|g(x + d)\|}{\|d\|} \rightarrow 0$. We deduce from (\ref{eq:g_over_r}) that
	$$\frac{\|x^+ - x_\ast\|}{\|x - x_\ast\|} \rightarrow 0.$$
	Hence, we have $Q$-superlinear convergence along the subsequence of steps in $D_k$.
\end{proof}

The same argument, with minimal alteration, applies to Rolling Block BFGS.

\subsection*{Remarks}
\begin{enumerate}
	\item As we observed earlier, the choice to include $\ux{s}{k}{1}$ in $D_k$ is arbitrary. The proof of Theorem~\ref{SUPERLINEAR} holds with \emph{any} selection rule for $D_k$ as long as it guarantees $\sum_{k=1}^\infty r_k < \infty$. Therefore, it is likely that Theorem~\ref{SUPERLINEAR} and Lemma~\ref{STEPSIZE_1} apply to \emph{all} steps. That is, eventually $\lambda_k = 1$ is admissible for all steps and $\frac{ \|\ux{x}{k}{i+1} - x_\ast\|}{\|\ux{x}{k}{i} - x_\ast\|} \rightarrow 0$. In fact, by selecting $D_k$ in a particular way, we can ensure that eventually $\lambda_k = 1$ is admissible for all steps. 
	
	\begin{CORO}\label{ADVERSARY1}
		Suppose that $D_k$ is constructed to always contain a step for which $\lambda_k = 1$ is not admissible, whenever such a step exists in the $k$-th block. Then $\lambda_k = 1$ is eventually admissible for \emph{all} steps.
	\end{CORO}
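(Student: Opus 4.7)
The plan is to argue by contradiction, combining Lemma~\ref{STEPSIZE_1} with the adversarial construction of $D_k$. If $D_k$ always absorbs a step refusing $\lambda = 1$ whenever one exists in the block, and Lemma~\ref{STEPSIZE_1} ensures every step included in $D_k$ eventually accepts $\lambda = 1$, then refusing steps cannot persist in any block.

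Specifically, I would first invoke Lemma~\ref{STEPSIZE_1} to produce an index $k_0$ such that for every $k \geq k_0$ the unit step size satisfies both Armijo--Wolfe conditions (\ref{armijo}) and (\ref{wolfe}) for every step $\ux{s}{k}{i}$ included in $D_k$. Next, suppose toward contradiction that some block $k \geq k_0$ contains a step $\ux{d}{k}{i_0}$ for which $\lambda = 1$ is not admissible. By the hypothesized construction, $D_k$ must include some such step $\ux{s}{k}{i_1}$, and Lemma~\ref{STEPSIZE_1} applied to $\ux{s}{k}{i_1}$ then forces $\lambda = 1$ to be admissible for $\ux{d}{k}{i_1}$---a direct contradiction. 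Hence for every $k \geq k_0$, no step in block $k$ refuses $\lambda = 1$.

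The only genuine obstacle is verifying that Lemma~\ref{STEPSIZE_1} (and its chain of supporting results back through \Cref{RLINEAR}) actually applies under the modified selection rule for $D_k$. As the remark immediately preceding the corollary observes, the superlinear analysis is valid for any selection rule preserving $\sum_{k=1}^\infty r_k < \infty$, and the trace and determinant estimates in Section~\ref{sec:CONVERGENT}---notably \Cref{TRACE} and \Cref{DET}---depend on $D_k$ only through the facts that its columns form a subset of the actual step vectors $\{\ux{s}{k}{i}\}_i$ and pass the curvature filter. Both conditions are preserved by the adversarial rule; in particular, one may list the designated adversarial step first in $S_k$, so that the strong-convexity bound $\sigma_1^2 \geq m\|\ux{s}{k}{1}\|^2 \geq \tau\|\ux{s}{k}{1}\|^2$ guarantees it survives \textsc{filtersteps}. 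Once this compatibility is recorded, the contradiction above closes the proof.
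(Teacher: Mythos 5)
Your proof is correct and follows essentially the same route as the paper: the adversarial rule forces any step refusing $\lambda = 1$ into $\opn{Col}(D_k)$, where \Cref{STEPSIZE_1} eventually forbids such refusals, yielding the contradiction. Your added check that the adversarial step survives \textsc{filtersteps} (via $\sigma_1^2 \geq m\|\ux{s}{k}{1}\|^2 \geq \tau\|\ux{s}{k}{1}\|^2$) and that the supporting lemmas are selection-rule-agnostic is a careful elaboration of what the paper covers in the remark preceding the corollary.
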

	\begin{proof}
		When executing the $k$-th update, we specifically set the first column of $D_k$ to a step $d_k$ from the $k$-th block for which $\lambda_k = 1$ is not admissible, if any such step exists. If we could find such a step $d_k$ for infinitely many $k$, then this process would produce an infinite sequence of steps $d_k \in \opn{Col}(D_k)$ for which $\lambda_k = 1$ is never eventually admissible. This contradicts Lemma~\ref{STEPSIZE_1}. \end{proof}
	However, \Cref{ADVERSARY1} does \emph{not} show that in general, $\lambda_k = 1$ is eventually admissible for all steps, as it only holds when we select steps in an adversarial manner. This example highlights an interesting dichotomy arising from our proof method. On one hand, \Cref{SUPERLINEAR} and \Cref{STEPSIZE_1} are retrospective and apply to any sequence $\{D_k\}$ that we select. This strongly suggests that they should hold for all steps. On the other hand, the method of proof (based on analyzing the convergence of $\|B_k - I\|_F^2 - \|\wt{B}_{k+1} - I\|_F^2$) makes use only of the steps in $D_k$, and thus can only prove things about the steps in $D_k$.
	
	\item The parameter $\tau$ has no equivalent in the classical BFGS method, and enforces a lower bound on the curvature of steps used in the update. If $\tau$ is chosen to be too large, then it is possible that $B_k$ is not updated on some iterations; in this case, the convergence rate will not be superlinear. A \emph{sufficient} condition for $B_k$ to be updated on every iteration, and hence for superlinear convergence, is to take $\tau \leq m$, but this requires knowledge of a lower bound on $m$, the least eigenvalue of the Hessian.
	
	This issue can be avoided if $f$ is strongly convex on the entire level set $\Omega = \{x \in \RR^n: f(x) \leq f(x_1)\}$, by using a slightly modified version of \textsc{filtersteps}. Instead of $\tau$, the user selects any $\tau' > 0$. The first step $\ux{s}{k}{1}$ is unconditionally included in $D_k$, and then subsequent steps $\ux{s}{k}{2},\ldots,\ux{s}{k}{q+1}$ are included only if the condition $\sigma_i^2 > \tau'\|\ux{s}{k}{i}\|^2$ holds. Since $\sigma_1^2 = \la \ux{s}{k}{1}, G_k\ux{s}{k}{1} \ra \geq m \|\ux{s}{k}{1}\|^2$, every entry of $\Sigma$ satisfies $\sigma_i \geq \tau \|\ux{s}{k}{i}\|^2$ for $\tau = \min\{\tau', m\} > 0$, and thus the condition for convergence is satisfied. This guarantees $Q$-superlinear convergence for any choice of $\tau'$, although larger $\tau'$ reduces the number of steps in $D_k$ (see \Cref{SUPERLINEAR}).

\end{enumerate}
\section{Modified Block BFGS for Non-Convex Optimization}\label{sec:NCONV}

Convergence theory for the classical BFGS method does not extend to non-convex functions. However, with minor modifications, BFGS performs well for non-convex optimization and can be shown to converge in some cases. Modifications that have been studied include:
\begin{enumerate}
	\item\label{itm:lifu_cautious} Cautious Updates (Li and Fukushima, \cite{LK_SIAM}) \\
	A BFGS update is performed only if $ \frac{y_k^Ts_k}{\|s_k\|^2} \geq \epsilon \|g_k\|^\alpha$, where $\epsilon, \alpha$ are parameters.
	\item\label{itm:lifu_mod} Modified Updates (Li and Fukushima, \cite{LK_JCAM}) \\
	The secant equation is modified to $B_{k+1}s_k = z_k$, where $z_k = y_k + r_k s_k$ and the parameter $r_k$ is chosen so that $z_k^Ts_k \geq \epsilon \|s_k\|^2$.
	\item Damped BFGS (Powell, \cite{POW_DAMP})\\
	The secant equation is modified to $B_{k+1}s_k = z_k$, where $z_k = \theta_k y_k + (1-\theta_k)B_ks_k$, and for $0 < \phi < 1$, the damping constant $\theta_k$ is determined by
	$$\theta_k = \left\{ \begin{array}{cl} 1, & \text{ if } y_k^Ts_k \geq \phi s_k^TB_ks_k \\ \frac{(1-\phi)s_k^TB_ks_k}{s_k^TB_ks_k - y_k^Ts_k}, & \text{ otherwise} \end{array} \right.$$
	This is perhaps the most widely used modified BFGS method. Unfortunately, no global convergence proof is known for this method.
\end{enumerate}

We show Block BFGS converges for non-convex functions, and describe analogous modifications for block updates. The next theorem provides a framework for proving convergence in the non-convex setting.

\begin{THM}\label{GEN_NCVX_CONV}
	Assume $f$ is twice differentiable and $-MI \preceq G(x) \preceq MI$ for all $x$ in the convex hull of the level set $\{x \in \RR^n: f(x) \leq f(x_1)\}$. Suppose that $\{\wt{G}_k\}_{k=1}^\infty$ is a sequence of symmetric matrices satisfying, for all $k$, the conditions
	\begin{enumerate}
		\item $-MI \preceq \wt{G}_k \preceq MI$
		\item For some constant $\eta > 0$, the matrix $D_k$ produced by $\textsc{filtersteps}(S_k, \wt{G}_k)$ satisfies $D_k^T\wt{G}_kD_k \succeq \eta D_k^TD_k$
	\end{enumerate}
	Then we may perform Block BFGS using the updates
	$$B_{k+1} = B_k - B_kD_k(D_k^TB_kD_k)^{-1}D_k^TB_k + \wt{G}_kD_k(D_k^T\wt{G}_kD_k)^{-1}D_k^T\wt{G}_k$$
	and Block BFGS converges in the sense that $\liminf_k \|g_k\| = 0$.
\end{THM}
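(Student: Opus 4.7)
The plan is to mimic the proof of \Cref{CONVERGENT} with $\wt{G}_k$ substituted for $G_k$ throughout, checking that each supporting lemma either transfers verbatim or needs only a quantitative adjustment of constants. The hypotheses $-MI \preceq \wt{G}_k \preceq MI$ and $D_k^T\wt{G}_kD_k \succeq \eta D_k^TD_k$ are calibrated to supply, respectively, the operator-norm bound used in the trace argument and the positive-definiteness needed to preserve $B_{k+1} \succ 0$ and to drive the determinant bound. First I would verify that the algorithm is well-defined: the proof of \Cref{BPosDef} never used positive-semidefiniteness of $G_k$, only that $D_k^TG_kD_k \succ 0$, so the decomposition $z^TH_{k+1}z = w^T(D_k^T\wt{G}_kD_k)^{-1}w + v^TH_kv$ still gives $B_{k+1} \succ 0$ inductively. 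Every search direction is therefore a descent direction, and since $-MI \preceq G(x) \preceq MI$ on the convex hull of the level set, $g$ is Lipschitz there with constant $M$, making the Armijo--Wolfe line search admissible. \Cref{SUMMABLE}, which uses only the Armijo condition and $f$ bounded below, and \Cref{LIPSCH_KEY}, which uses only Lipschitzness of $g$, then go through verbatim.

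The key quantitative change is in \Cref{TRACE}. The convex proof rewrote $G_jD_j(D_j^TG_jD_j)^{-1}D_j^TG_j$ as $G_j^{1/2}P_jG_j^{1/2}$, which requires $G_j \succeq 0$. In the non-convex case I would instead use the cyclic trace identity
\[
\opn{Tr}\bigl(\wt{G}_kD_k(D_k^T\wt{G}_kD_k)^{-1}D_k^T\wt{G}_k\bigr) = \opn{Tr}\bigl((D_k^T\wt{G}_kD_k)^{-1}D_k^T\wt{G}_k^2D_k\bigr),
\]
and combine the L\"owner bounds $(D_k^T\wt{G}_kD_k)^{-1} \preceq \eta^{-1}(D_k^TD_k)^{-1}$ and $D_k^T\wt{G}_k^2D_k \preceq M^2D_k^TD_k$ with the standard inequality $\opn{Tr}(AB) \leq \opn{Tr}(A'B)$ for $0 \preceq A \preceq A'$ and $B \succeq 0$. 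This bounds the trace by $\eta^{-1}M^2q$, and redefining $c_3 := \opn{Tr}(B_1) + \eta^{-1}M^2q$ recovers the linear trace bound and its summed companion; \Cref{STEPTRACE}, \Cref{TRPROD}, and \Cref{DETBD} then follow with this new constant.

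The determinant lemmas transfer with essentially no change. \Cref{DETUPDATE} is a purely algebraic identity requiring only $B_k \succ 0$ and invertibility of $D_k^T\wt{G}_kD_k$; its auxiliary matrix $Y = D_k^T\wt{G}_kD_k + (\wt{G}_kD_k)^TB_k^{-1}(\wt{G}_kD_k)$ is the sum of a PD and a PSD matrix, hence PD, so the identity holds. Because $D_k^T\wt{G}_kD_k \succ 0$ after filtering, the $L\Sigma L^T$ decomposition computed by \textsc{filtersteps} has genuinely positive diagonal entries $\sigma_i^2$, so the filter condition $\sigma_i^2 \geq \tau\|\ux{s}{k}{i}\|^2$ retains its meaning and \Cref{DET} gives the same multiplicative lower bound on $\det(B_{k+1})$ in terms of $\det(B_k)$. \Cref{DETPROD} and \Cref{LASTBOUND} then follow unchanged, and the contradiction argument concluding \Cref{CONVERGENT} yields $\liminf_k\|g_k\|=0$.

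The main obstacle is the trace bound: the convex proof leans on $G_k^{1/2}$, which no longer exists when $\wt{G}_k$ is indefinite, so the argument has to be recast purely in terms of the two spectral hypotheses. Once that step is rephrased cyclically, the rest of the proof is a careful audit that no other step of \Cref{CONVERGENT} used $G_k \succeq 0$ beyond what the present hypotheses already guarantee for $\wt{G}_k$ and $D_k^T\wt{G}_kD_k$.
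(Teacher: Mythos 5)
Your proposal is correct and follows essentially the same route as the paper: the paper's proof likewise reduces to checking that \Cref{BPosDef} and \Cref{LIPSCH_KEY} survive unchanged and that the trace bound in \Cref{TRACE} can be recovered with the new constant $c_3 = \opn{Tr}(B_1) + qM^2/\eta$, via the same L\"owner inequality $(D_k^T\wt{G}_kD_k)^{-1} \preceq \eta^{-1}(D_k^TD_k)^{-1}$ combined with the rank-$q$ orthogonal projection $D_k(D_k^TD_k)^{-1}D_k^T$ and $\|\wt{G}_k\| \leq M$. Your cyclic-trace phrasing of that step is just a rewriting of the paper's inequality $\opn{Tr}(\wt{G}_kD_k(D_k^T\wt{G}_kD_k)^{-1}D_k^T\wt{G}_k) \leq \eta^{-1}\opn{Tr}(\wt{G}_kD_k(D_k^TD_k)^{-1}D_k^T\wt{G}_k) \leq qM^2/\eta$, and your audit of the determinant lemmas matches what the paper leaves implicit.
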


\begin{proof}
	The proof follows that of Theorem~\ref{CONVERGENT}, with several changes. First, note that Lemma~\ref{BPosDef} implies that $B_{k+1}$ remains positive definite, since \textsc{filtersteps} ensures that $D_k^T\wt{G}_kD_k$ is positive definite. Observe that Lemma~\ref{LIPSCH_KEY} continues to hold, as the condition $-MI \preceq G(x) \preceq MI$ for all $x$ in the convex hull of the level set implies that the gradient $g$ is Lipschitz with constant $M$. In Lemma~\ref{TRACE}, take the constant $c_3$ to  be $c_3 = \opn{Tr}(B_1) + \frac{qM^2}{\eta}$ and notice that
	$$\opn{Tr}(\wt{G}_jD_j(D_j^T\wt{G}_jD_j)^{-1}D_j^T\wt{G}_j) \leq \frac{1}{\eta} \opn{Tr}(\wt{G}_j D_j(D_j^TD_j)^{-1}D_j^T \wt{G}_j) \leq \frac{qM^2}{\eta}$$
	where the last inequality follows because $D_j(D_j^TD_j)^{-1}D_j^T$ is the orthogonal projection onto $\opn{Col}(D_j)$ and has rank $q_j \leq q$, and $\|\wt{G}_j D_j(D_j^TD_j)^{-1}D_j^T \wt{G}_j\| \leq \|\wt{G}_j\|^2 = M^2$.
	
	The remainder of the proof is similar to Theorem~\ref{CONVERGENT}.
\end{proof}

\begin{LMA}\label{BB_NCV_CONV}
	Assume $f$ is twice differentiable and $-MI \preceq G(x) \preceq MI$ for all $x$ in the level set $\{x \in \RR^n: f(x) \leq f(x_1)\}$. If $D_k^TG_kD_k$ satisfies $\sigma_i^2 \geq \tau \|s_i\|^2$, where $\sigma_i$ is the $i$-th diagonal entry of the $L\Sigma L^T$ decomposition of $D_k^TG_kD_k$, then $D_k^TG_kD_k \succeq \eta D_k^TD_k$ for $\eta = \frac{\tau^q}{q^qM^{q-1}}$.
\end{LMA}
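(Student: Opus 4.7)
The inequality $D_k^TG_kD_k \succeq \eta D_k^TD_k$ is a generalized eigenvalue statement. Writing $A = D_k^TG_kD_k$ and $B = D_k^TD_k$, the plan is to show that the smallest generalized eigenvalue $\mu_1$ of the pencil $(A,B)$ satisfies $\mu_1 \geq \eta$. The filtering condition $\sigma_i^2 \geq \tau\|s_i\|^2 > 0$ forces $\det(A) = \prod_i \sigma_i^2 > 0$, so $A$ is positive definite; this in turn forces the columns of $D_k$ to be linearly independent, making $B$ positive definite as well. Let $q_k \leq q$ denote the number of columns of $D_k$.

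The strategy is to upper bound the largest generalized eigenvalue, lower bound the product of all generalized eigenvalues, and then divide. For the upper bound, $G_k \preceq MI$ gives $v^TAv = (D_kv)^TG_k(D_kv) \leq Mv^TBv$ for every $v$, so $A \preceq MB$ and every $\mu_i \leq M$. For the product, the $L\Sigma L^T$ decomposition yields $\det(A) = \prod_{i=1}^{q_k} \sigma_i^2 \geq \tau^{q_k} \prod_{i=1}^{q_k}\|s_i\|^2$, while Hadamard's inequality applied to the positive definite matrix $B$ gives $\det(B) \leq \prod_i B_{ii} = \prod_i \|s_i\|^2$. Dividing, $\prod_i \mu_i = \det(A)/\det(B) \geq \tau^{q_k}$.

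Combining these bounds, $\mu_1 \geq \tau^{q_k}/\prod_{i\geq 2}\mu_i \geq \tau^{q_k}/M^{q_k-1}$. The filtering condition applied to the first column gives $\tau\|s_1\|^2 \leq \sigma_1^2 = A_{11} \leq M\|s_1\|^2$, whence $\tau \leq M$; together with $q_k \leq q$ this forces $\tau^{q_k}/M^{q_k-1} \geq \tau^q/M^{q-1} \geq \tau^q/(q^qM^{q-1}) = \eta$, giving the claim. There is no serious obstacle here: the argument rests on the Hessian bound controlling the top generalized eigenvalue and on Hadamard's inequality bridging $\det(B)$ with the $L\Sigma L^T$ diagonal entries, both in the spirit of the determinant arguments used in \Cref{DETBD,DET,DETUPDATE}. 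In fact the argument produces the tighter bound $\tau^q/M^{q-1}$; the stated $\eta$ corresponds to the looser form obtained by absorbing a harmless factor of $q^{-q}$, perhaps so that the exponents parallel the coarse trace-based bounds appearing in \Cref{GEN_NCVX_CONV}.
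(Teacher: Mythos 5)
Your proof is correct, and it reaches the paper's conclusion by a sharper route. The paper's own argument first normalizes the columns of $D_k$ to unit norm, lower-bounds the smallest \emph{ordinary} eigenvalue of $D_k^TG_kD_k$ by $\det(D_k^TG_kD_k)/\prod_{i<q}\lambda_i \geq \tau^q/(qM)^{q-1}$ (using $\lambda_i \leq qM$, which comes from $G_k \preceq MI$ together with $\lambda_{\max}(D_k^TD_k) \leq \operatorname{Tr}(D_k^TD_k) = q$), and then converts $D_k^TG_kD_k \succeq \lambda_{\min} I$ into a bound against $D_k^TD_k$ via $I \succeq \frac{1}{q}D_k^TD_k$, paying the factor $q^q$ in two installments. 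You instead work directly with the generalized eigenvalues of the pencil $(D_k^TG_kD_k, D_k^TD_k)$: the top one is bounded by $M$ immediately from $G_k \preceq MI$, and the product is controlled by $\det(D_k^TG_kD_k) = \prod_i\sigma_i^2 \geq \tau^{q_k}\prod_i\|s_i\|^2$ against Hadamard's bound $\det(D_k^TD_k) \leq \prod_i\|s_i\|^2$. The skeleton --- smallest eigenvalue $\geq$ determinant over a product of upper-bounded eigenvalues --- is the same, but your use of Hadamard's inequality in place of normalization plus the trace bound removes the $q^q$ loss entirely and yields $D_k^TG_kD_k \succeq (\tau^q/M^{q-1})\,D_k^TD_k$, strictly stronger than the stated $\eta$. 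You also handle the case $q_k < q$ explicitly (via $\tau \leq M$, which you correctly extract from the filter condition on the first column), whereas the paper tacitly assumes $q_k = q$; this is a minor but genuine gap in the paper's write-up that your version closes. Since $\eta$ only enters \Cref{GEN_NCVX_CONV} as some positive constant, the improved value has no downstream consequence, but your argument is the cleaner one.
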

\begin{proof}
	Let $G = G_k, D = D_k$. Without loss of generality, we may assume the columns of $D$ have norm 1, as otherwise we can normalize $D$ by right-multiplying by a positive diagonal matrix. Then the diagonal entries $\sigma_i^2$ of the $L \Sigma L^T$ decomposition of $D^TGD$ satisfy $\sigma_i^2 \geq \tau$.
	
	Order the eigenvalues of $D^TGD$ as $\lambda_1 \geq \lambda_2 \geq \ldots \geq \lambda_q > 0$. We have
	$$\lambda_q = \frac{\opn{det}(D^TGD)}{\prod_{i=1}^{q-1} \lambda_i } \geq \frac{ \tau^q}{(qM)^{q-1}}.$$
	Since every column of $D$ has norm 1, the eigenvalues of $D^TD$ are bounded by $\opn{Tr}(D^TD) = q$. Hence $I \succeq \frac{1}{q} D^TD$ and so $ D^TGD \succeq \frac{\tau^q}{(qM)^{q-1}} I \succeq \frac{\tau^{q}}{q^qM^{q-1}} D^TD$.
\end{proof}

Block BFGS (Algorithm~\ref{alg:BLOCK}) satisfies the conditions of Lemma~\ref{BB_NCV_CONV} when we take $\wt{G}_k = G_k$ and apply \textsc{filtersteps} (Algorithm~\ref{alg:FILTER}). Thus Theorem~\ref{GEN_NCVX_CONV} shows that Block BFGS converges globally for non-convex functions. The filtering procedure is analogous to the cautious update (\ref{itm:lifu_cautious}) of Li and Fukushima, and hence, it is possible, although very unlikely, that filtering will produce an empty $D_k$.  Hessian modification and Powell's damping method can also be extended to block updates.

\section{Numerical Experiments}\label{sec:NUMER}
We evaluate the performance of several block quasi-Newton methods by generating a \emph{performance profile} \cite{DOLMOR}, which can be described as follows. Given a set of algorithms $\mathcal{S}$ and a set of problems $\mathcal{P}$, let $t_{s,p}$ be the cost for algorithm $s$ to solve problem $p$. For each problem $p$, let $m_p$ be the minimum cost to solve $p$ of any algorithm. A performance profile is a plot comparing the functions
$$\rho_s(r) = \frac{ | \{ p \in \mathcal{P}: t_{s,p}/m_p \leq r \}|}{|\mathcal{P}|} $$
for all $s \in \mathcal{S}$. Observe that $\rho_s(r)$ is the fraction of problems in $\mathcal{P}$ that algorithm $s$ solved within a factor $r$ of the cost of the best algorithm for problem $p$. As reference points, we include the classical BFGS method and gradient descent in $\mathcal{S}$.

For our inexact line search, we used the function \texttt{WolfeLineSearch} from \emph{minFunc} \cite{MINFUNC}, a mature and widely used Matlab library for unconstrained optimization. The line search parameters were $\alpha = 0.1$ and $\beta = 0.75$, and \texttt{WolfeLineSearch} was configured to use interpolation with an initial step size $\lambda = 1$ (options $\texttt{LS\_type} = 1, \texttt{LS\_init} = 0, \texttt{LS\_interp} = 1, \texttt{LS\_multi} = 0$).

From preliminary experiments, we found that large values of $q$ tend to increase numerical errors, eventually leading to search directions $d_k$ that are not descent directions. This effect is particularly pronounced when $q \geq \sqrt{n}$. The experiments in \cite{GGR} also obtained the best performance when $\floor{n^{1/4}} \leq q \leq \sqrt{n}$. In creating performance profiles, we opted for $q = \floor{n^{1/3}}$. 

\subsection{Convex Experiments}

We compared the methods listed below. 
\begin{enumerate}
	\item \emph{BFGS}
	\item \emph{Block BFGS Variant 1}, or \emph{B-BFGS1}
	
	Block BFGS (Algorithm~\ref{alg:BLOCK}). We store the full inverse Hessian approximation $H_k$ and compute $d_k = -H_kg_k$ by a matrix-vector product. We do not perform \textsc{filtersteps}, so the update (\ref{eq:BFGS_INV_UPD}) uses all steps.
	
	\item \emph{Block BFGS Variant 2}, or \emph{B-BFGS2}
	
	Block BFGS (Algorithm~\ref{alg:BLOCK}), with Algorithm~\ref{alg:FILTER} and $\tau =  10^{-3}$. As in B-BFGS1, the full Hessian approximation $H_k$ is stored. $H_k$ is updated by (\ref{eq:BFGS_INV_UPD}) using the steps returned by Algorithm~\ref{alg:FILTER}.
	
	\item \emph{Block BFGS with $q = 1$}, or \emph{B-BFGS-q1}
	
	This compares the effect of using a single sketching equation as in Block BFGS updates versus using the standard secant equation of BFGS updates.
	
	\item \emph{Rolling Block BFGS}, or \emph{RB-BFGS}
	
	See Section~\ref{sub:RBBFGS}. We take a smaller value $q = \min \{3, \floor{n^{1/3}}\}$ for this method, and omit filtering.
	
	\item \emph{Gradient Descent}, or \emph{GD}
\end{enumerate}

Each algorithm is considered to have \emph{solved} a problem when it reduces the objective value to less than some threshold $f_{stop}$. The thresholds $f_{stop}$ are pre-computed for each problem $p$ by minimizing $p$ with minFunc to obtain a near-optimal solution $f_\ast$, and setting $f_{stop} = f_\ast + 0.01|f_\ast|$.

We measure the cost $t_{s,p}$ in two metrics: the number of steps, and the amount of CPU time. Every step $\ux{s}{k}{i}$ is counted once when measuring the number of steps.

\subsubsection{Logistic Regression Tests}\label{subsub:LOGREG}

As in \cite{GGR}, we ran tests on \emph{logistic regression} problems, a common classification technique in statistics. For our purposes, it suffices to describe the objective function. Given a set of $m$ data points $(y_i, x_i)$, where $y_i \in \{0,1\}$ is the class, and $x_i \in \RR^n$ is the vector of features of the $i$-th data point, we minimize, over all weights $w \in \RR^n$, the loss function
\begin{equation}\label{eq:logreg}
	L(w) = -\frac{1}{m}\sum_{i=1}^m \log \phi(y_i, x_i, w) + \frac{1}{2m} w^TQw
\end{equation}
$$\phi(y_i, x_i, w) = \left\{ \begin{array}{ll} \frac{1}{1 + \exp(-x_i^Tw)} & \text{ if } y_i = 1 \\ 1 - \frac{1}{1 + \exp(-x_i^Tw)} & \text{ if } y_i = 0 \end{array} \right.$$
where $Q \succ 0$ in the 'regularization' term. Figure~\ref{fig:LR} shows the performance profiles for this test. See Appendix~\ref{appdx:EXP} for a list of the data sets and our choices for $Q$.

\begin{figure}
	\centering 
	\includegraphics[clip=true, scale = \imgscale]{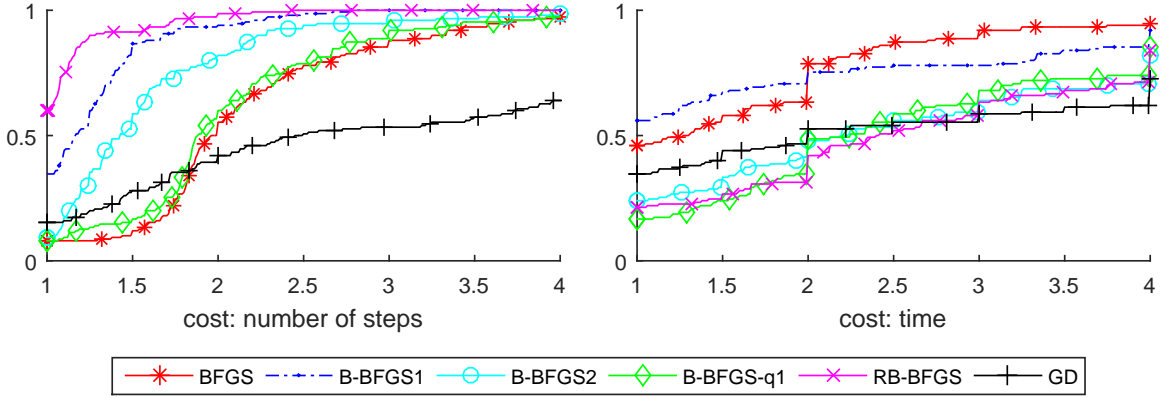}
	\caption{Logistic Regression profiles ($\rho_s(r)$)}
	\label{fig:LR}
\end{figure}

In Figure~\ref{fig:LR}, we see that the block methods B-BFGS1, B-BFGS2, and RB-BFGS all outperform BFGS in terms of the number of steps to completion. Considering the amount of CPU time used, B-BFGS1 is competitive with BFGS, while B-BFGS2 and RB-BFGS are more expensive than BFGS. This suggests that the additional curvature information added in block updates allows Block BFGS to find better search directions, but at the cost of the update operation being more expensive. B-BFGS-q1 and BFGS exhibit very similar performance when measured in steps, so there appears to be little difference between using a single sketching equation and a secant equation on this class of problems.

Interestingly, B-BFGS1 outperformed B-BFGS2, indicating that steps are being removed from the update, which would improve the search directions. The most likely explanation is that $\tau=10^{-3}$ is excessively large relative to the eigenvalues of $G(x)$.
\subsubsection{Log Barrier QP Tests}\label{subsub:LOGB}
We tested problems of the form
\begin{equation}\label{eq:logb}
	\min_{y \in \RR^s} ~ F(y) = \half y^T\ov{Q}y + \ov{c}^Ty - 1000 \sum_{i=1}^n \log (\ov{b}- \ov{A}y)_i
\end{equation}
where $\ov{Q} \succeq 0$, $\ov{c} \in \RR^s$, $\ov{b} \in \RR^n$, and $\ov{A} \in \RR^{n \times s}$. Note that the objective value is $+\infty$ if $y$ does not satisfy $\ov{A}y < \ov{b}$. In Appendix~\ref{appdx:EXP}, we explain how to derive a log barrier problem from a QP in standard form. See  Figure~\ref{fig:LB} for the performance profile. Note that problems with a barrier structure are atypical in the context of unconstrained minimization, and are usually solved with specific interior point methods. However, they are somewhat interesting as they can be quite challenging to solve.

\begin{figure}
	\centering
	\includegraphics[clip=true, scale=\imgscale]{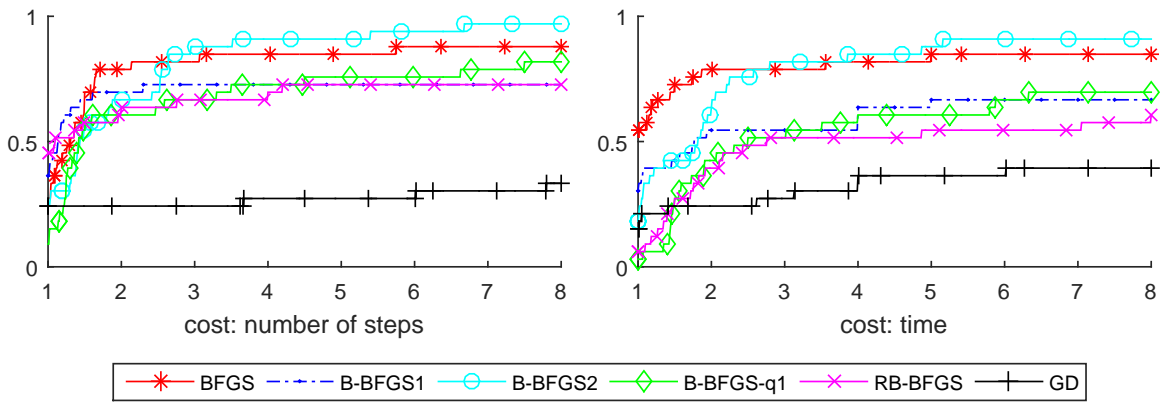}
	\caption{Log Barrier QP profiles ($\rho_s(r)$)}
	\label{fig:LB}
\end{figure}

Since $\nabla^2 F(y) = Q + 1000\ov{A}^T S \ov{A}$ where $S$ is diagonal with entries $(\ov{b} - \ov{A}y)_i^{-2}$, these problems are often extremely ill-conditioned. This leads to issues when using \texttt{WolfeLineSearch}, as the line search can require many backtracking iterations, or even fail completely, when the current point is near the boundary of the log barrier. This causes particular issues with block updates, as $\nabla^2 F(y)$ has small numerical rank when $S$ has a small number of extremely large entries. Consequently, we removed problems from the test set which were ill-conditioned to the extent that even after performing step filtering, the line search failed at some step before reaching the optimal solution. Quasi-Newton methods, and those using block updates with large $q$ in particular, are poorly suited for these ill-conditioned problems. However, although the standard BFGS method also can fail on these problems, it is more robust than block methods.

\subsection{Non-Convex Experiments}\label{sub:ncvx}
Since non-convex functions often have multiple stationary points, more complex behavior is possible than in the convex case. For instance, one algorithm may generally require more steps to converge, but may be taking advantage of additional information to help avoid spurious local minima.

Let $f_p$ denote the best objective value obtained for problem $p$ by any algorithm. To evaluate both the early and asymptotic performance of our algorithms, we generated performance profiles comparing the cost for each algorithm to reach a solution with objective value less than $f_p + \epsilon |f_p|$ for $\epsilon = 0.2$, $\epsilon = 0.1$, and $\epsilon = 0.01$. When $|f_p|$ is very small (for instance, $|f_p| < 10^{-10}$), we essentially have $f_p = 0$ and treat all solutions with objective value within $10^{-10}$ as being optimal.

We compared four different algorithms for non-convex minimization:
\begin{enumerate}
	\item \emph{Damped BFGS}, or \emph{D-BFGS}
	
	Damped BFGS with $\phi = 0.2$ (see Section~\ref{sec:NCONV}).
	
	\item \emph{Block BFGS}, or \emph{B-BFGS}
	
	Block BFGS (Algorithm~\ref{alg:BLOCK}) with $q = \floor{n^{1/3}}$ and $\tau = 10^{-5}$.
	
	\item \emph{Block BFGS with $q = 1$}, or \emph{B-BFGS-q1}
	
	Block BFGS (Algorithm~\ref{alg:BLOCK}) with $q = 1$ and $\tau = 10^{-5}$.
	
	\item \emph{Gradient Descent}, or \emph{GD}
\end{enumerate}

\subsubsection{Hyperbolic Tangent Loss Tests}\label{subsub:tanh}
This is also a classification technique; however, unlike the logistic regression problems in Section \ref{subsub:LOGREG}, these problems are generally non-convex.
Given a set of $m$ data points $(y_i, x_i)$ where $y_i \in \{0,1\}$ is the class, and $x_i \in \RR^n$ the features, we seek to minimize over $w \in \RR^n$ the loss function
$$ L(w) = \frac{1}{m}\sum_{i=1}^m \left( 1 - \tanh (y_i x_i^Tw) \right) + \frac{1}{2m}\|w\|^2$$
Figure~\ref{fig:tanh} presents performance profiles for $\epsilon = 0.2, 0.1, 0.01$, with cost measured in both steps and CPU time. See Appendix~\ref{appdx:EXP} for a list of the data sets.

\begin{figure}
	\centering
	\includegraphics[clip=true, scale=\imgscale]{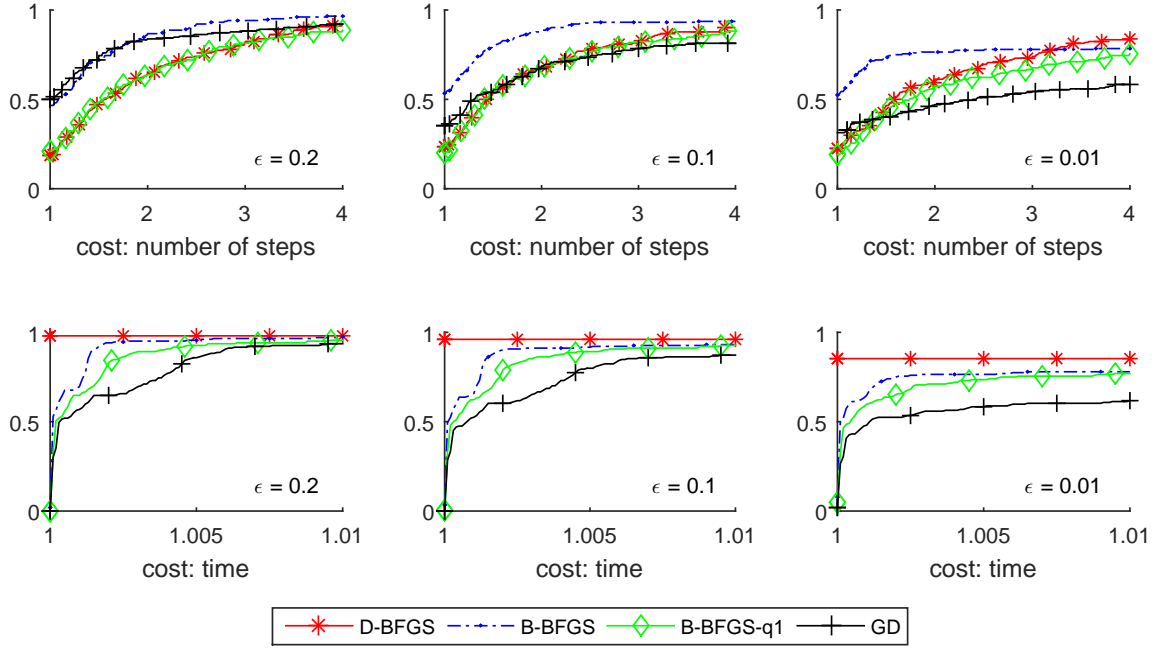}
	\caption{Hyperbolic Tangent Loss profiles ($\rho_s(r)$)}
	\label{fig:tanh}
\end{figure}

B-BFGS and gradient descent perform well at first, making rapid progress to within $0.2|f_p|$ of $f_p$ in the fewest number of steps. B-BFGS continues to converge quickly, generally requiring the fewest steps to reach $0.1|f_p|$ and $0.01|f_p|$ of $f_p$, while gradient descent is overtaken by BFGS and B-BFGS-q1.

Surprisingly, all four algorithms used nearly the same amount of CPU time, with each algorithm completing a majority of problems after using only 1\% more time than the fastest algorithm.

\subsubsection{Standard Benchmark Tests}\label{subsub:gen}
This test used 19 functions from the test collection of Andrei \cite{NATST}, many of which originate from the CUTEst test set. The functions are listed below, with the number of variables $n$ in parentheses: \\
\texttt{arwhead} (300), \texttt{bdqrtic} (200), \texttt{cube} (400), \texttt{diag1} (250), \texttt{dixonprice} (200), \texttt{edensch} (300), \texttt{eg2} (400), \texttt{explin2} (200), \texttt{fletchcr} (400), \texttt{genhumps} (250), \texttt{indef} (250), \texttt{mccormick} (400), \texttt{raydan1} (400), \texttt{rosenbrock} (300), \texttt{sine} (400), \texttt{sinquad} (400), \texttt{tointgss} (200), \texttt{trid} (200), \texttt{whiteholst} (300).

The gradients and Hessians were computed using the automatic differentiation program ADiGator \cite{AUTODIF}.

For each of these functions, we generated 6 random starting points and tested the 4 algorithms using each starting point, for a total of 114 problems. Figure~\ref{fig:gen} presents performance profiles for $\epsilon = 0.2, 0.1, 0.01$, with cost measured in steps. We see from Figure~\ref{fig:gen} that D-BFGS consistently outperforms B-BFGS-q1, which suggests that Powell's damping method is superior to cautious updates. 

\begin{figure}
	\centering
	\includegraphics[clip=true, scale=\imgscale]{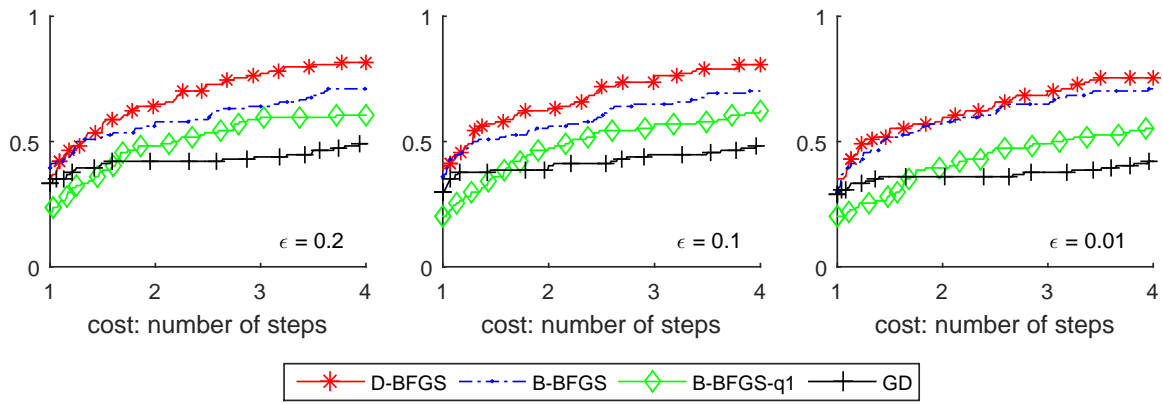}
	\caption{Standard Benchmark profiles ($\rho_s(r)$)}
	\label{fig:gen}
\end{figure}

\section{Concluding Remarks}

We have shown that Block BFGS provides the same theoretical rate of convergence as the classical BFGS method. Further investigation is needed to determine how Block BFGS performs on a wider range of real problems. In our experiments, we focused on a very basic implementation of Block BFGS, but many simple heuristics for improving performance and numerical stability are possible. In particular, it is important to select good values of $q$ and $\tau$ based on insights from the problem domain. We also briefly investigated the effect of using the action of the Hessian on the previous step versus the change in gradient over the previous step (as in classical BFGS) in constructing the update. Further study of the benefits and drawbacks of such an approach would be of interest, as would study of parallel implementation. We hope that this work will serve as a useful foundation for future research on quasi-Newton methods using block updates.

\section*{Acknowledgement}
\addcontentsline{toc}{Section}{Acknowledgement}
We would like to thank two anonymous referees for their helpful comments and suggestions.

\bibliographystyle{siam}
\raggedright	
\bibliography{blockbfgs}

\appendix
\section{Derivation of the Block BFGS Update Formula}\label{appdx:UPD_FORM}
Let $\|X\|_{G_k}$ denote the matrix norm $\opn{Tr}(XG_kX^TG_k)$. We show that the unique solution of
$$
(P) \hspace{1em} \left\{
\begin{array}{rl} \min\limits_{\wt{H} \in \RR^{n \times n}} &\|\wt{H} - H_k\|_{G_k} \\  \text{s.t } &\wt{H} = \wt{H}^T, \wt{H}G_kD_k = D_k\end{array}
\right.
$$
is given by formula (\ref{eq:BFGS_INV_UPD}). Introduce a new variable $E = \wt{H} - H_k$, and let $D = D_k$, $G = G_k$, $H = H_k, Y = G_kD_k, Z = D_k - H_kG_kD_k$. We rewrite the problem (P) in terms of $E$ and express its Lagrangian as
$$ \mathcal{L}(E, \Sigma, \Lambda) = \frac{1}{2}\opn{Tr}(EGE^TG) +\opn{Tr}(\Sigma(E - E^T)) + \opn{Tr}(\Lambda^T(EY - Z)$$

Solving $\frac{\partial \mathcal{L}}{\partial E} = 0$ in terms of $E$, we obtain $E = -G^{-1}(Y\Lambda^T + \Sigma - \Sigma^T)G^{-1}$. Thus $E - E^T = G^{-1}(\Lambda Y^T - Y\Lambda^T + 2(\Sigma^T - \Sigma))G^{-1} = 0$,
from which we obtain $\Sigma - \Sigma^T = \half(\Lambda Y^T - Y \Lambda^T)$. Therefore $E = -\half G^{-1}(Y\Lambda^T + \Lambda Y^T)G^{-1}$. 

To solve for $\Lambda$, substituting this expression for $E$ into the constraint $EY = Z$ yields
\begin{equation}\label{eq:LAG_AUX}
	G^{-1}(Y\Lambda^T + \Lambda Y^T)G^{-1}Y + 2Z = 0
\end{equation}
Left multiplying by $Y^T$ and using the definition $Y = GD$, we have
$$(D^TGD)(\Lambda^TD) + (D^T\Lambda)(D^TGD) + 2Y^TZ = 0$$
Now, it is easy to verify that $\Lambda^TD = -(D^TGD)^{-1}(Y^TZ)$ is the solution. Therefore, from (\ref{eq:LAG_AUX}), $\Lambda Y^TD = - Y\Lambda^T G^{-1}Y - 2GZ = Y(D^TGD)^{-1}Y^TZ - 2GZ$. Hence, $\Lambda = (Y(D^TGD)^{-1}Y^TZ - 2GZ)(D^TGD)^{-1}$. Substituting $\Lambda$ into our expression for $E$ and rearranging produces formula (\ref{eq:BFGS_INV_UPD}).

\section{Details of Experiments}\label{appdx:EXP}
\subsection{Logistic Regression Tests (\ref{subsub:LOGREG})}

The following 18 data sets from LIBSVM \cite{LIBSVM} were used:\\
\texttt{a1a}, \texttt{a2a}, \texttt{a3a}, \texttt{a4a}, \texttt{australian}, \texttt{colon-cancer}, \texttt{covtype}, \texttt{diabetes}, \texttt{duke}, \texttt{ionosphere-scale}, \texttt{madelon}, \texttt{mushrooms}, \texttt{sonar-scale}, \texttt{splice}, \texttt{svmguide3}, \texttt{w1a}, \texttt{w2a}, \texttt{w3a}.

Each data set was partitioned into 3 disjoint subsets with at most 2000 points. For each subset, we have a problem of the form (\ref{eq:logreg}) with the standard $L_2$ regularizer $Q = I$, producing 54 standard problems. An additional 96 problems with $Q = I + Q'$ were produced by adding a randomly generated convex quadratic $Q'$ to one of the standard problems. Two such problems were produced for each standard problem, except those from \texttt{duke} and \texttt{colon-cancer} (omitted for problem size).

\subsection{Log Barrier QP Tests (\ref{subsub:LOGB})}
Given a convex quadratic program $\min\limits_{x \in \RR^n} \{ \half x^TQx + c^Tx~|~ Ax = b, x \geq 0 \}$,  we derive a log barrier QP problem as follows. Taking a basis $N$ for the null space of $A$ (of dimension $s$), and a solution $Ax_0 = b, x_0 \geq 0$, the given QP is equivalent to $\min\limits_{y \in \RR^s} \{ \half y^T\ov{Q}y + \ov{c}^Ty~|~\ov{A}y \leq \ov{b}\}$, where $\ov{Q} = N^TQN, \ov{c} = N^T(c + Qx_0), \ov{b} = x_0$ and $\ov{A} = -N$. Replacing the constraint by a log barrier $-\mu \sum_{i=1}^n \log(\ov{b} - \ov{A}y)_i$ (with $\mu = 1000$), we obtain problem (\ref{eq:logb}).

This test included 43 problems in total. There were 35 log barrier problems derived from the QP test collection of Maros and M\'{e}sz\'{a}ros \cite{MMQP}:\\
\texttt{cvxqp1\_m}, \texttt{cvxqp1\_s}, \texttt{cvxqp2\_m}, \texttt{cvxqp2\_s}, \texttt{cvxqp3\_m}, \texttt{cvxqp3\_s}, \texttt{dual1}, \texttt{dual2}, \texttt{dual3}, \texttt{dual4}, \texttt{primal1}, \texttt{primal3}, \texttt{primal4}, \texttt{primalc1}, \texttt{primalc2}, \texttt{primalc5}, \texttt{primalc8}, \texttt{q25fv47}, \texttt{qbeaconf}, \texttt{qgrow15}, \texttt{qgrow22}, \texttt{qgrow7}, \texttt{qisrael}, \texttt{qscagr7}, \texttt{qscfxm1}, \texttt{qscfxm2}, \texttt{qscfxm3}, \texttt{qscorpio}, \texttt{qscrs8}, \texttt{qsctap1}, \texttt{qsctap3}, \texttt{qshare1b}, \texttt{qship08l}, \texttt{stadat1}, \texttt{stadat2}.

An additional 8 problems were derived from the following LP problems in the COAP collection \cite{COAP}: \texttt{adlittle}, \texttt{agg}, \texttt{agg2}, \texttt{agg3}, \texttt{bnl1}, \texttt{brandy}, \texttt{fffff800}, \texttt{ganges}.

\subsection{Hyperbolic Tangent Loss Tests (\ref{subsub:tanh})}

This test used the same data sets as the logistic regression test, with \texttt{duke} omitted because of large problem size ($n = 7130$). As in the logistic regression test, each data set was partitioned into 3 subsets with at most 2000 points, producing 51 loss functions. For each loss function, we tried 4 random starting points, for a total of 204 problems.
\end{document}